\newcounter{mylisti} \newcounter{mylistii}
\newcounter{nest}
\newcommand{\defaultlabel}{}
\newcommand{\bm}{\ensuremath{\mathbb M}}
\newcommand{\bn}{\ensuremath{\mathbb N}}
\newcommand{\br}{\ensuremath{\mathbb R}}
\newcommand{\cC}{\ensuremath{\mathcal C}}
\newcommand{\cK}{\ensuremath{\mathcal K}}
\newcommand{\diam}{\operatorname{diam}}
\newcommand{\cdiam}{\operatorname{cdiam}}
\newcommand{\cdim}{\operatorname{cdim}}
\newcommand{\dist}{\ensuremath{\mathrm{dist}}}
\newcommand{\sep}{\ensuremath{\mathrm{sep}}}
\newcommand{\n}{\ensuremath{\bar{n}}}
\newcommand{\ds}{\displaystyle}
\newcommand{\codist}{\mathrm{codist}}
\newcommand{\lip}{\ensuremath{\mathrm{Lip}}}
\newcommand{\colip}{\ensuremath{\mathrm{coLip}}}
\newcommand{\co}{\mathrm{c}_0}
\newcommand{\xbl}{\ensuremath{\left(\sum_{n=1}^\infty\ell_{\infty}^n\right)_2}}
\newtheorem{theorem}{Theorem}[section]
\newtheorem{lemma}{Lemma}[section]
\newtheorem{proposition}{Proposition}[section]
\newtheorem{corollary}{Corollary}[section]
\newtheorem{claim}{Claim}[section]
\newtheorem{obs}{Observation}
\newtheorem{problem}{Problem}[section]
\theoremstyle{definition}
\newtheorem{defn}{Definition}[section]
\theoremstyle{remark}
\newtheorem{rem}{Remark}[section]
\begin{document}

\title[$(\beta)$-distortion of some infinite graphs]{$(\beta)$-distortion of some infinite graphs}

\author[F.~Baudier]{Florent~Baudier}
\address{Florent Baudier, Institut de Math\'ematiques Jussieu-Paris Rive Gauche, Universit\'e Pierre et Marie Curie, Paris, France and Department of Mathematics, Texas A\&M University, College Station, TX 77843-3368, USA (current)}
\email{flo.baudier@imj-prg.fr, florent@math.tamu.edu (current)}
\author[S.~Zhang]{Sheng~Zhang}
\address{Sheng Zhang, Department of Mathematics, Texas A\&M University, College Station, TX 77843-3368, USA}
\email{z1986s@math.tamu.edu}
\date{}

\thanks{The first author's research was partially supported by ANR-13-PDOC-0031, project NoLiGeA}
\thanks{The second author's research was partially supported by NSF DMS-1301604 and is part of his dissertation that is being prepared at Texas A\&M University under the direction of William B. Johnson}
\keywords{}
\subjclass[2010]{46B85, 46B80, 46B20}

\begin{abstract}
A distortion lower bound of $\Omega(\log(h)^{1/p})$ is proven for embedding the complete countably branching hyperbolic tree of height $h$ into a Banach space admitting an equivalent norm satisfying property $(\beta)$ of Rolewicz with modulus of power type $p\in(1,\infty)$ (in short property ($\beta_p$)). Also it is shown that a distortion lower bound of $\Omega(\ell^{1/p})$ is incurred when embedding the parasol graph with $\ell$ levels into a Banach space with an equivalent norm with property ($\beta_p$). The tightness of the lower bound for trees is shown adjusting a construction of Matou\v{s}ek to the case of infinite trees. It is also explained how our work unifies and extends a series of results about the stability under nonlinear quotients of the asymptotic structure of infinite-dimensional Banach spaces. Finally two other applications regarding metric characterizations of asymptotic properties of Banach spaces, and the finite determinacy of bi-Lipschitz embeddability problems are discussed.
\end{abstract}

\maketitle

\section{Introduction}
Let $(X,d_X)$ and $(Y,d_Y)$ be two metric spaces. $B_X(x,r)$ denotes the closed ball centered at $x\in X$ with radius $r>0$. A map $f\colon X\to Y$ is called a bi-Lipschitz embedding if it is one-to-one and both $f$ and $f^{-1}$ are Lipschitz. The {\it distortion} of $f$ is then defined as
$$ \dist(f):= \lip(f)\cdot\lip(f^{-1}):=\sup_{x\neq y \in X}\frac{d_Y(f(x),f(y))}{d_X(x,y)}.\sup_{x\neq y \in X}\frac{d_X(x,y)}{d_Y(f(x),f(y))}.$$
As usual $c_{Y}(X):=\inf\{\dist(f)\ |\ f\colon X\to Y \textrm{ is a bi-Lipschitz embedding}\}$ denotes the $Y$-distortion of $X$. If there is no bi-Lipschitz embedding from $X$ into $Y$ then we set $c_{Y}(X)=\infty$.

\medskip

In this article we study Banach spaces that satisfy a geometric property introduced in \cite{Rolewicz1987}, now known as property ($\beta$) of Rolewicz or simply property ($\beta$). The following is an equivalent definition of property ($\beta$) according to Kutzarova \cite{Kutzarova1991}.

\begin{defn}
A Banach space $X$ has property ($\beta$) if for any $\varepsilon>0$ there exists $\delta(\varepsilon)\in(0,1)$ so that for every element $x\in B_X$ and every sequence $(y_i)_{i=1}^{\infty}\subset B_X$ with $\sep(\{y_i\}_{i=1}^\infty)\geq\varepsilon$, there exists $i_0\in\bn$ such that
$$\left\|\dfrac{x-y_{i_0}}{2}\right\|\leq1-\delta(\varepsilon).$$
The separation constant of the sequence is defined by $\sep(\{y_i\}_{i=1}^\infty):=\inf\{\|y_n-y_m\|:n\neq m\}$. $B_X$ denotes the closed unit ball of $X$. A modulus for the property ($\beta$) was defined in \cite{AyerbeDominguezCutillas1994} as follows (we follow the notation of \cite{DKLR2014}):
$$\bar{\beta}_X(t)=1-\sup\left\{\inf_{i\ge 1}\left
\{\frac{\|x-y_i\|}{2}\right\}\colon x\in B_X, (y_i)_{i=1}^{\infty}\subset B_X,
\sep(\{y_i\}_{i=1}^\infty)\geq t\right\}.$$
\end{defn}
\noindent Note that $\bar{\beta}_X$ is a non-decreasing map defined on an interval $[0,a]$ where the constant $a\in[1,2]$ depends on the geometry of the Banach space $X$. The ($\beta$)-modulus of $X$ is said to have power type $p\in(1,\infty)$ with constant $\gamma\in(0,\infty)$ if $\bar{\beta}_X(t)\geq\gamma t^p$ for all $t\in[0,a]$. In that case we simply say that $X$ has property ($\beta_p$). The omitted definitions and notational conventions from Banach space theory can be found in \cite{Handbook}.

\medskip

The main concern in this article is the quantitative embedding theory of some infinite graphs into Banach spaces with property $(\beta)$. More precisely let $p\in(1,\infty)$, and define
$$\cC_{(\beta_p)}:=\{\textrm{Y has an equivalent norm with property $(\beta_p)$}\},$$
$$\cC_{(\beta)}:=\{\textrm{Y has an equivalent norm with property $(\beta)$}\}.$$
A typical example of a Banach space in $\cC_{(\beta_p)}$ is any reflexive $\ell_p$-sum of finite dimensional Banach spaces, e.g. $\ell_p$. Since property $(\beta)$ implies reflexivity neither $\ell_1$ nor $\co$ are in $\cC_{(\beta)}$. In \cite{DKLRpriv} it was shown that $X$ admits an equivalent norm with property $(\beta)$ if and only if $X$ admits an equivalent norm with property $(\beta_p)$ for some $p\in(1,\infty)$. In other words, $\bigcup_{p\in(1,\infty)}\cC_{(\beta_p)}=\cC_{(\beta)}$. The $(\beta_p)$-distortion of a metric space $X$ is the value of the parameter $c_{(\beta_p)}(X):=\inf\{c_{Y}(X): Y\in \cC_{(\beta_p)}\}$ that measures the best possible embedding of $X$ into a space with property $(\beta_p)$. It is worth mentioning that the problem of estimating the $(\beta_p)$-distortion of locally finite metric spaces has been essentially settled in \cite{BaudierLancien2008}, where it is shown for instance that every locally finite metric space admits a bi-Lipschitz embedding into $(\sum_{n=1}^\infty{\ell_{\infty}^n})_{\ell_p}$ with distortion at most $181$. In this article the $(\beta_p)$-distortion of some families of non-locally finite graphs is investigated. The content of this article is now described.

\smallskip

Section \ref{distortion} is devoted to obtaining lower bounds on the $(\beta_p)$-distortion of some families of non-locally finite graphs. In Section \ref{cbrembed} the family $(T^\omega_h)_{h=1}^\infty$ of complete countably branching hyperbolic trees is studied. It is proven that if $Y\in\cC_{(\beta_p)}$ then $c_{Y}(T^\omega_h)=\Omega(\log(h)^{1/p})$, i.e. $c_{Y}(T^\omega_h)\gtrsim \log(h)^{1/p}$ for $h$ big enough, where as usual the symbol $\gtrsim$ is meant to hide a constant depending eventually on the geometry of the receiving space $Y$ but not on $h$. The proof combines an asymptotic version of the prong bending lemma from \cite{Kloeckner2014} (see also \cite{Matousek1999} for a similar argument) and a self-improvement argument \`a la Johnson and Schechtman \cite{JohnsonSchechtman2009} which was elegantly implemented in the case of binary trees by Kloeckner \cite{Kloeckner2014}. A similar lower bound is shown in Section \ref{parasols} for the family of parasol graphs introduced by Dilworth, Kutzarova, and Randrianarivony \cite{DKR2014}.

\smallskip

The optimality of the lower bound for trees is discussed in Section \ref{optimal}. Adjusting a construction of Matou\v{s}ek \cite{Matousek1999} to the case of infinite weighted trees, an upper bound $c_{\ell_p}(T)=O(\log(\kappa^*(T))^{1/p})$ (i.e. $c_{\ell_p}(T)\lesssim\log(\kappa^*(T))^{1/p}$) is proved where $\kappa^*(T)$ is a coloring parameter related to the combinatorial structure of $T$. The relationship between the caterpillar-coloring parameter $\kappa^*(T)$ and the strong-coloring parameter $\delta^*(T)$ introduced by Lee, Naor, and Peres \cite{LeeNaorPeres2009} is discussed.
\smallskip

In the last section several applications of the present work is gathered. Regarding the stability of the asymptotic structure of infinite-dimensional Banach spaces under nonlinear quotients, it is shown how this work unifies, and extends, a series of results from \cite{LimaR2012}, \cite{DKLR2014}, \cite{Zhang2015}, and \cite{DKR2014}. The quantitative approach devised in this article takes full advantage of the simple observation that the quantitative theories of bi-Lipschitz embeddings and Lipschitz quotients coincide, in a precise sense, for trees. For instance, an elementary proof of the fact that $\ell_q$ is not a Lipschitz quotient of a subset of $\ell_p$ when $q>p>1$ follows from the work presented here. New insights are also given regarding the metric characterization of two equivalent classes of Banach spaces: the class of reflexive spaces admitting an asymptotically uniformly convex equivalent norm and an asymptotically uniformly smooth equivalent norm, and the class of spaces admitting an equivalent norm with property $(\beta)$. Finally the finite determinacy of bi-Lipschitz embeddability problems is discussed and it is shown that a theorem of Ostrovskii \cite{Ostrovskii2012} cannot be extended to non-locally finite graphs.
\section{Embeddability into spaces with property $(\beta)$}\label{distortion}

Recall that a weighted connected simple graph is a connected graph $G=(V,E)$ with no multiple edges or self-loop, equipped with a positive weight function $w\colon E\to(0,\infty)$. A graph is unweighted if every edge has unit weight. $G$ will always be equipped with its canonical metric $\rho_G$ on its set of vertices, where $$\rho_G(x,y):=\inf\{\sum_{e\in P}w(e)\colon \textrm{$P$ is a path connecting $x$ to $y$}\}.$$ A weighted tree is an acyclic weighted connected simple graph. In a tree two vertices are connected by a unique path and a leaf is a vertex of degree 1. For technical reasons we shall work with rooted trees. When we root a tree at an arbitrary vertex $r$ the ancestor-descendant relationship between pairs of vertices is then well defined. The height of a vertex $x$ of a rooted tree $T$, denoted by $h(x)$, is the number of edges separating $x$ from the root. The height of a rooted tree $T$ is then defined by $h(T):=\sup_{x\in T}h(x)$. The last common ancestor (in the ancestor-descendant relationship) of two vertices $x$ and $y$ is denoted by $lca(x,y)$. With this notation the canonical graph distance on an unweighted rooted tree is given explicitly by $$\rho_T(x,y)=h(x)+h(y)-2h(lca(x,y))=\rho_T(x,lca(x,y))+\rho_T(lca(x,y),y).$$ For a positive integer $h$, $T^\omega_h$ denotes the unweighted complete countably branching rooted tree of height $h$, while $T^\omega_\omega$ will be the unweighted complete countably branching rooted tree of infinite height.

\subsection{Complete countably branching trees}\label{cbrembed}
$K_{\omega,1}$ denotes the star graph with countably many branches, i.e. the bipartite graph that has a partition into exactly two classes, one consisting of a singleton called the center, the other consisting of countably many vertices called the leaves. In the sequel $b$ will denote the center. An arbitrary leaf, denoted by $r$, is chosen, and a labeling $(t_i)_{i=1}^\infty$ of the (countably many) remaining leaves is fixed. With this labeling in mind $K_{\omega,1}$ can be seen as an umbel with countably many pedicels, where $r$ stands for root, $b$ for the branching point on the stem, and $(t_i)_{i=1}^\infty$ is a labeling of the tips of the pedicels. As usual $K_{\omega,1}$ is equipped with the shortest path metric. The next lemma says that if the umbel is embedded into a space with property $(\beta)$ then at least one pedicel has to bend towards the root, and the distance from its tip to the root is shorter than expected. It can be seen as an asymptotic analogue of Lemma $2$ in \cite{Kloeckner2014}.

\begin{lemma}[Umbel pedicel bending lemma]\label{umbel} Let $Y$ be a Banach space with property ($\beta)$. Then for every bi-Lipschitz embedding $f\colon K_{\omega,1}\to Y$ there exists $i_0\in\bn$ such that
\begin{equation}\label{tipbending}
\left\|f(r)-f(t_{i_0})\right\|\le 2\lip(f)\left(1-\bar{\beta}_Y\left(\frac{2}{\dist(f)}\right)\right).
\end{equation}
\end{lemma}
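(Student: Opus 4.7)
The plan is to reduce the lemma to a single invocation of the $(\beta)$-modulus applied to a properly rescaled image of the umbel. Setting $L:=\lip(f)$ and $D:=\dist(f)$, I would introduce the normalized vectors
$$x:=\frac{f(r)-f(b)}{L},\qquad y_i:=\frac{f(t_i)-f(b)}{L}\quad(i\in\bn).$$
The point of this particular rescaling is that dividing by $L$ simultaneously places $x$ and every $y_i$ into the unit ball $B_Y$, while keeping a quantitative lower bound on $\sep(\{y_i\}_{i=1}^{\infty})$ expressed in terms of $D$.

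First I would verify the two hypotheses needed to invoke $\bar{\beta}_Y$. Since $\rho_{K_{\omega,1}}(r,b)=\rho_{K_{\omega,1}}(t_i,b)=1$ and $f$ is $L$-Lipschitz, one has $\|x\|\leq 1$ and $\|y_i\|\leq 1$, hence $x,y_i\in B_Y$. Next, since the pedicel tips satisfy $\rho_{K_{\omega,1}}(t_i,t_j)=2$ for $i\neq j$ and $\lip(f^{-1})=D/L$, the lower Lipschitz estimate yields
$$\|y_i-y_j\|=\frac{\|f(t_i)-f(t_j)\|}{L}\geq\frac{\rho_{K_{\omega,1}}(t_i,t_j)}{L\cdot\lip(f^{-1})}=\frac{2}{D},$$
so $\sep(\{y_i\}_{i=1}^{\infty})\geq 2/D$.

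Plugging $x$ and $(y_i)$ into the definition of $\bar{\beta}_Y$ at the separation parameter $t=2/D$ then produces an index $i_0\in\bn$ satisfying $\|(x-y_{i_0})/2\|\leq 1-\bar{\beta}_Y(2/D)$, and undoing the normalization gives
$$\|f(r)-f(t_{i_0})\|=L\,\|x-y_{i_0}\|\leq 2\lip(f)\bigl(1-\bar{\beta}_Y(2/\dist(f))\bigr),$$
which is exactly \eqref{tipbending}. I do not foresee any genuine obstacle: once the correct scaling has been identified, the lemma is a one-line application of property $(\beta)$. The only minor technicality is that $2/D$ must lie in the domain $[0,a]$ of $\bar{\beta}_Y$; this is automatic whenever $\dist(f)\geq 2/a$, and if $\dist(f)<2/a$ then the inequality \eqref{tipbending} is weaker than the trivial estimate $\|f(r)-f(t_{i_0})\|\leq 2\lip(f)$ provided by the Lipschitz constant, so the conclusion holds in any case.
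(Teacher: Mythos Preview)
Your proof is correct and essentially identical to the paper's: the paper translates so that $f(b)=0$ and then sets $x=f(r)/\lip(f)$, $y_i=f(t_i)/\lip(f)$, which is exactly your subtraction-and-rescaling, followed by the same single application of the $(\beta)$-modulus. Your closing caveat about the domain $[0,a]$ is unnecessary (and slightly off): the very sequence $(y_i)_{i\ge1}\subset B_Y$ you constructed has separation at least $2/D$, so $2/D$ automatically lies in the domain of $\bar{\beta}_Y$.
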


\begin{proof}
One may assume after an appropriate translation that $f(b)=0$. Let $x=\ds\frac{f(r)}{\lip(f)}$ and $\ds y_i=\frac{f(t_i)}{\lip(f)}$. Clearly $\|x\|\le 1,$ $\|y_i\|\le1$ for all $i\in\bn$, and for $n\neq m$,
$$\|y_n-y_m\|\ge \ds\frac{2}{\dist(f)}>0.$$
Since the norm of $Y$ satisfies property $(\beta)$ there exists $i_0\in\bn$ such that $$\left\|\frac{x-y_{i_0}}{2}\right\|\le1-\bar{\beta}_Y\left(\frac{2}{\dist(f)}\right),$$ and hence the result follows.
\end{proof}

\begin{rem}
Note that the conclusion of Lemma \ref{umbel} can be strengthened. Since \eqref{tipbending} holds for all but finitely many $i$'s, there exists an infinite subset $\bm\subset\bn$ such that $$\max\left\{\sup_{i\in\bm}\left\|f(r)-f(t_{i})\right\|;\sup_{i\neq j\in\bm}\left\|f(t_i)-f(t_{j})\right\|\right\}\le2\lip(f)\left(1-\bar{\beta}_Y\left(\frac{2}{\dist(f)}\right)\right).$$
\end{rem}
The next proposition is a self-improvement argument \`a la Johnson and Schechtman \cite{JohnsonSchechtman2009}. It is shown that if the countably branching tree of a certain height embeds into a Banach space with property ($\beta$) then the countably branching tree of roughly half the height embeds as well, but with a slightly better distortion. The vertex set of $T^\omega_h$ can be naturally labelled by elements in $\bigcup_{i=0}^{h}\bn^h$, where by convention $\bn^0=\emptyset$ is the label assigned to the root. The notation $\n=(n_1,\dots,n_r)$, for some $r\le h$, designates a generic vertex of $T^\omega_h$.

\begin{proposition}\label{treeimprovement}
Let $Y$ be a Banach space with property ($\beta$). Let $k\in \bn$, and assume that  $T^{\omega}_{2^k}$ bi-Lipschitzly embeds into $Y$ with distortion $D$. Then $T^{\omega}_{2^{k-1}}$ bi-Lipschitzly embeds into $Y$ with distortion at most $D(1-\bar{\beta}_Y(\frac{2}{D}))$.
\end{proposition}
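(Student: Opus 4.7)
Plan: apply the umbel bending lemma locally at each internal branching vertex of the embedded tree, then extract a scaled copy of $T^\omega_{2^{k-1}}$ sitting on the even levels of $T^\omega_{2^k}$ along which every small-tree edge has been uniformly contracted by a factor $(1-\bar{\beta}_Y(2/D))$ compared with the graph distance $2$ it would otherwise have, while the lower estimate is inherited for free from the colipschitz behaviour of $f$.

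\emph{First step (local bending).} For each vertex $u$ of $T^\omega_{2^k}$ at an odd level, let $p(u)$ denote its parent. The set $\{p(u),u\}\cup\mathrm{children}(u)$, endowed with the induced graph metric, is isometric to the umbel $K_{\omega,1}$ (with $p(u)$ as root, $u$ as branching point, and the children of $u$ as tips). The restriction of $f$ to this set has Lipschitz constant at most $\lip(f)$ and distortion at most $D$, so Lemma \ref{umbel} together with the monotonicity of $\bar{\beta}_Y$ produces a child $c(u)$ of $u$ satisfying
\[
\|f(p(u))-f(c(u))\|\le 2\lip(f)\left(1-\bar{\beta}_Y\!\left(\tfrac{2}{D}\right)\right).
\]

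\emph{Second step (embedded subtree).} Construct $\phi:T^\omega_{2^{k-1}}\to T^\omega_{2^k}$ by induction on depth. Send the root to the root and, once $\phi(\bar{m})$ has been defined at even level $2j$, set $\phi((\bar{m},i)):=c((\phi(\bar{m}),i))$ for every $i\in\bn$. By construction $\phi$ maps level $j$ to level $2j$ and respects the ancestor-descendant order, whence a routine check on least common ancestors yields the doubling identity
\[
\rho_{T^\omega_{2^k}}(\phi(\bar{m}),\phi(\bar{m}'))=2\,d_{T^\omega_{2^{k-1}}}(\bar{m},\bar{m}').
\]

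\emph{Third step (distortion estimate).} Put $g:=f\circ\phi$. The lower bound
\[
\|g(\bar{m})-g(\bar{m}')\|\ge \frac{2}{\lip(f^{-1})}\,d_{T^\omega_{2^{k-1}}}(\bar{m},\bar{m}')
\]
is immediate from $\lip(f^{-1})$ combined with the doubling identity. For the upper bound, chain the edge estimate of the first step along the geodesic in $T^\omega_{2^{k-1}}$ via the triangle inequality:
\[
\|g(\bar{m})-g(\bar{m}')\|\le 2\lip(f)\left(1-\bar{\beta}_Y\!\left(\tfrac{2}{D}\right)\right)d_{T^\omega_{2^{k-1}}}(\bar{m},\bar{m}').
\]
Multiplying the two estimates yields $\dist(g)\le D\left(1-\bar{\beta}_Y(2/D)\right)$, as required.

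The only delicate point is in the second step: one must verify that the inductive choice of bent grandchildren is consistent (which it is, since umbels based at different odd-level vertices involve disjoint families of tips) and that the doubling identity really holds for non-lineal pairs, not only for ancestor-descendant ones. Everything else amounts to one application of Lemma \ref{umbel} per odd-level vertex followed by a routine triangle-inequality chain.
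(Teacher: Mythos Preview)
Your proof is correct and follows essentially the same route as the paper: apply the umbel lemma at each odd-level vertex to select one well-behaved grandchild, build an order-preserving $\phi$ onto the even levels, and read off the improved distortion from the edgewise bending estimate together with the co-Lipschitz lower bound of $f$. The paper's version differs only cosmetically (it rescales $g$ by $\tfrac12$ instead of tracking the doubling identity explicitly, and it appeals directly to the fact that on a graph the Lipschitz constant is determined by adjacent pairs rather than writing out the geodesic chain), but the argument is the same.
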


\begin{proof}
Let $f\colon T^\omega_{2^k}\to Y$ be a bi-Lipschitz embedding with distortion $D$. In order to define an embedding of $T^\omega_{2^{k-1}}$ into $Y$ one selects vertices located at even heights following a simple procedure. The set of all elements of height at most $2$ in the tree $T^\omega_{2^k}$ can be seen as being formed by  countably many umbels. For every $n_1\in\bn$, consider the umbel whose root is the vertex $\emptyset$ and whose branching point is the vertex $(n_1)\in T^\omega_{2^k}$. By Lemma \ref{umbel} there is a vertex located at level $2$ which is ``close'' to the root of the umbel, i.e. there exists $t_{(n_1)}\in\bn$ such that $$\|f(\emptyset)-f((n_1,t_{(n_1)}))\|\le 2\lip(f)\left(1-\bar{\beta}_Y\left(\frac{2}{D}\right)\right).$$ For every vertex $(n_1,t_{(n_1)})$ as above, and for every $n_2\in\bn$, consider the umbel whose root is the vertex $(n_1,t_{(n_1)})$, and whose branching point is the vertex $(n_1,t_{(n_1)},n_2)$. Again select using Lemma \ref{umbel}, a level-4 vertex that is the tip of the bending pedicel, i.e. there exists $t_{(n_1,n_2)}\in\bn$ such that $$\|f((n_1,t_{(n_1)}))-f((n_1,t_{(n_1)},n_2,t_{(n_1,n_2)}))\|\le 2\lip(f)\left(1-\bar{\beta}_Y\left(\frac{2}{D}\right)\right).$$
Repeat this procedure until vertices located in the set of leaves of the tree are selected. To summarize we have chosen a collection of integers $(t_{\n})_{\n\in T^\omega_{2^{k-1}}}$ such that for every $\n=(n_1,\dots,n_r)\in T^\omega_{2^{k-1}}$ one has
\begin{align*}
\|f((n_1,t_{(n_1)},\dots,n_{r-1},t_{(n_1,\dots,n_{r-1})}))-f((n_1,t_{(n_1)},\dots,n_{r},t_{(n_1,\dots,n_{r})}))\|\le
\end{align*}
\begin{align*}
2\lip(f)\left(1-\bar{\beta}_Y\left(\frac{2}{D}\right)\right).
\end{align*}
Finally define
\begin{align*}
g\colon T^\omega_{2^{k-1}}&\to Y, \hspace{2mm}
\n=(n_1,\dots,n_r)\mapsto \frac{f((n_1,t_{(n_1)},\dots,n_{r},t_{(n_1,\dots,n_{r})}))}{2}
\end{align*}
and $g(\emptyset)=\frac{1}{2}f(\emptyset)$. Since for a graph it is sufficient to consider adjacent vertices to estimate the Lipschitz constant, one can easily check that $\dist(g)\le D(1-\bar{\beta}_Y(\frac{2}{D}))$.

\end{proof}

\begin{theorem}\label{treedist} Let $Y$ be a Banach space admitting an equivalent norm with property $(\beta)$. Then $\sup_{h\ge1}c_Y(T^\omega_h)=\infty$.

\smallskip

\noindent In particular, if $Y$ is a Banach space with property $(\beta_p)$ with $p\in(1,\infty)$ and constant $\gamma:=\gamma(Y)>0$, then $c_Y(T^\omega_h)\ge 2\gamma^{\frac{1}{p}}\log(\frac{h}{2})^{\frac{1}{p}}$.
\end{theorem}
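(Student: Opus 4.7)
My plan is to iterate Proposition \ref{treeimprovement} and exploit the power-type hypothesis on $\bar{\beta}_Y$. I will first reduce the qualitative statement to the quantitative one: by the result of \cite{DKLRpriv}, any Banach space in $\cC_{(\beta)}$ lies in $\cC_{(\beta_p)}$ for some $p\in(1,\infty)$, and since $Y$-distortions with respect to equivalent norms differ only by a multiplicative constant, a lower bound going to infinity for one such equivalent norm transfers to every equivalent norm on $Y$. Hence it suffices to establish the $(\beta_p)$ estimate.

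Next, fixing $k\in\bn$ and $\varepsilon>0$, I would set $D_0:=c_Y(T^\omega_{2^k})+\varepsilon$, so that $T^\omega_{2^k}$ embeds into $Y$ with distortion at most $D_0$. Iterating Proposition \ref{treeimprovement} exactly $k$ times produces a non-increasing sequence $D_0\ge D_1\ge\dots\ge D_k$ such that $T^\omega_{2^{k-j}}$ embeds into $Y$ with distortion at most $D_j$, and
\[D_{j+1}\le D_j\Bigl(1-\bar{\beta}_Y(2/D_j)\Bigr),\qquad j=0,\dots,k-1.\]
Inserting the lower bound $\bar{\beta}_Y(t)\ge\gamma t^p$ and rearranging yields $D_j^{p-1}(D_j-D_{j+1})\ge 2^p\gamma$. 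The key observation is that, since $(D_j)$ is non-increasing, $D_j^{p-1}\le D_0^{p-1}$, and therefore $D_0^{p-1}(D_j-D_{j+1})\ge 2^p\gamma$ for every $j$.

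Summing this telescoping inequality from $j=0$ to $j=k-1$ produces $D_0^{p-1}(D_0-D_k)\ge k\cdot 2^p\gamma$, which I would rewrite as
\[D_0^p \ge k\cdot 2^p\gamma + D_0^{p-1}D_k \ge k\cdot 2^p\gamma,\]
using the trivial fact that any distortion satisfies $D_k\ge 1$. This gives $D_0\ge 2\gamma^{1/p}k^{1/p}$; letting $\varepsilon\to 0$ and then choosing $k=\lfloor\log_2 h\rfloor$ (so that $T^\omega_{2^k}$ sits isometrically inside $T^\omega_h$) yields the announced lower bound on $c_Y(T^\omega_h)$.

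I do not foresee any significant obstacle, as the hard analytic work is already encoded in Proposition \ref{treeimprovement}; what remains is essentially a discrete analogue of integrating the differential inequality $D'\le -2^p\gamma/D^{p-1}$. The crucial simplification will be to use the monotonicity of $(D_j)$ to bound $D_j^{p-1}$ by $D_0^{p-1}$, converting an awkward non-linear recursion into a telescoping sum.
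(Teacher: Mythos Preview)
Your proposal is correct and follows essentially the same approach as the paper: both iterate Proposition~\ref{treeimprovement}, use the power-type lower bound on $\bar{\beta}_Y$ to turn the multiplicative recursion into the additive inequality $D_j-D_{j+1}\ge 2^p\gamma D_j^{1-p}$, and then bound $D_j^{1-p}$ by $D_0^{1-p}$ (equivalently $D_h^{1-p}$) to obtain a telescoping sum. The only cosmetic differences are that the paper works directly with the optimal distortions $D_{2^j}:=c_Y(T^\omega_{2^j})$ rather than an auxiliary sequence, and that it handles the qualitative statement by a short direct contradiction (taking limits in the recursion) instead of invoking \cite{DKLRpriv}.
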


\begin{proof}
Let $D_h:=c_Y(T^\omega_h)$ in the sequel, and assume that $\sup_{h\ge 1}D_h=D\in(0,\infty)$. Assume without loss of generality that $(D_h)_{h\ge 1}$ is a converging sequence. According to Proposition \ref{treeimprovement}, if $Y$ has property ($\beta$) then for every $k\ge 1$ one has $D_{2^{k-1}}\le D_{2^k}(1-\bar{\beta}_Y(\frac{2}{D}))$; taking the limit in $k$ gives a contradiction. Suppose that $Y$ has ($\beta_p$) with $p\in(1,\infty)$ and constant $\gamma:=\gamma(Y)>0$. It follows from Proposition \ref{treeimprovement}, that for all $j\le k$ one has  $D_{2^{j-1}}\le D_{2^j}(1-\frac{2^p\gamma}{D_{2^j}^{p}})$, where $k$ is such that $2^k\le h<2^{k+1}$. Therefore $D_{2^{j}}-D_{2^{j-1}}\ge \frac{2^p\gamma}{D_{2^j}^{p-1}}$ and $$D_h\ge D_{2^k}\ge 2^p\gamma\sum_{j=1}^k D_{2^j}^{1-p}+D_1\ge 2^p\gamma k D_{h}^{1-p}.$$ The conclusion follows easily.
\end{proof}
\subsection{Parasol graphs}\label{parasols}
In this section we consider again the graph $K_{\omega,1}$ in its umbel configuration. However, an extra vertex is introduced, denoted by $s$, and $s$ is connected to each of the tips of the pedicels by a single edge. $P^\omega_1$ denotes the new graph obtained, which looks like a parasol. Lemma \ref{parasol} is a simple consequence of Lemma \ref{umbel}.

\begin{lemma}[Parasol top bending lemma]\label{parasol} Let $Y$ be a Banach space with property ($\beta)$. Then for every bi-Lipschitz embedding $f\colon P^\omega_1\to Y$ one has $$\left\|f(r)-f(s)\right\|\le 3\lip(f)\left(1-\frac{2}{3}\bar{\beta}_Y\left(\frac{2}{\dist(f)}\right)\right).$$
\end{lemma}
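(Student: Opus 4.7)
My plan is to reduce this directly to Lemma \ref{umbel} applied to the umbel subgraph inside the parasol. The key observation is that the umbel $K_{\omega,1}$ with vertex set $\{r,b\}\cup\{t_i\}_{i=1}^\infty$ sits inside $P^\omega_1$ as an \emph{isometric} subgraph: indeed $r$ is adjacent only to $b$, so the only pairwise distances among these vertices in $P^\omega_1$ are $\rho(r,b)=\rho(b,t_i)=1$, $\rho(r,t_i)=2$ and $\rho(t_i,t_j)=2$ (whether one routes through $b$ or through $s$), matching the distances in $K_{\omega,1}$. Consequently the restriction $f\restriction_{K_{\omega,1}}$ is a bi-Lipschitz embedding satisfying $\lip(f\restriction_{K_{\omega,1}})\le\lip(f)$ and $\dist(f\restriction_{K_{\omega,1}})\le\dist(f)$.

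Next I would apply Lemma \ref{umbel} to $f\restriction_{K_{\omega,1}}$. Since $\bar{\beta}_Y$ is non-decreasing and $\dist(f\restriction_{K_{\omega,1}})\le\dist(f)$, the conclusion can be weakened to: there exists $i_0\in\bn$ such that
$$\|f(r)-f(t_{i_0})\|\le 2\lip(f)\left(1-\bar{\beta}_Y\left(\frac{2}{\dist(f)}\right)\right).$$

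To finish, I exploit the extra edge of $P^\omega_1$: since $t_{i_0}$ and $s$ are adjacent, $\|f(t_{i_0})-f(s)\|\le\lip(f)$. Then the triangle inequality gives
$$\|f(r)-f(s)\|\le\|f(r)-f(t_{i_0})\|+\|f(t_{i_0})-f(s)\|\le 2\lip(f)\left(1-\bar{\beta}_Y\left(\frac{2}{\dist(f)}\right)\right)+\lip(f),$$
and factoring $3\lip(f)$ out of the right-hand side yields the desired bound $3\lip(f)\bigl(1-\tfrac{2}{3}\bar{\beta}_Y(2/\dist(f))\bigr)$. There is no genuine obstacle here; the only point to verify carefully is the isometric subgraph claim (so that Lemma \ref{umbel} really does apply to the restriction with the same separation constant $2/\dist(f)$), which is immediate from the description of the parasol.
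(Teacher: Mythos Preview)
Your argument is correct and follows exactly the route the paper takes: apply Lemma \ref{umbel} to the umbel sitting inside $P^\omega_1$ to obtain $\|f(r)-f(t_{i_0})\|\le 2\lip(f)\bigl(1-\bar\beta_Y(2/\dist(f))\bigr)$, then add $\|f(t_{i_0})-f(s)\|\le\lip(f)$ via the triangle inequality. Your extra care in checking that the umbel is an isometric subgraph (so that the distortion does not increase upon restriction and the monotonicity of $\bar\beta_Y$ gives the stated bound) is a welcome clarification, but the approach is the same as the paper's.
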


\begin{proof}

The inequality follows from Lemma \ref{umbel}, the fact that $\|f(t_{i_0})-f(s)\|\le\lip(f)$, and the triangle inequality.
\end{proof}

The parasol graph $P^{\omega}_{\ell}$ can be defined as in \cite{DKR2014} using a fractal-like procedure. The parasol graph of level $1$ is just the graph $P^{\omega}_{1}$. The parasol graph of level $2$ is simply the graph obtained by replacing each edge in $P^{\omega}_{1}$ with a copy of $P^{\omega}_{1}$. Proceeding recursively $P^{\omega}_{\ell}$, the parasol graph of level $\ell$, is obtained by replacing each edge in $P^{\omega}_{\ell-1}$ by a copy of $P^{\omega}_{1}$. Proposition \ref{parasolimprovement} below is the analogue of Proposition \ref{treeimprovement} and can be used in the same way to prove Theorem \ref{parasoldist}. Indeed, we simply select the root vertex and the summit vertex in each copy of $P^{\omega}_{1}$ constituting $P^{\omega}_{\ell}$ to obtain a rescaled isometric copy (scaling factor of $3$) of $P^{\omega}_{\ell-1}$, and we iterate $\ell$ times to obtain Theorem \ref{parasoldist}. The details are left to the reader.

\begin{proposition}\label{parasolimprovement}
Let $Y$ be a Banach space with property ($\beta$). Let $\ell\in\bn$, and assume that  $P^{\omega}_{\ell}$ bi-Lipschitzly embeds into $Y$ with distortion $D$. Then $P^{\omega}_{\ell-1}$ bi-Lipschitzly embeds into $Y$ with distortion at most $D(1-\frac{2}{3}\bar{\beta}_Y(\frac{2}{D}))$.
\end{proposition}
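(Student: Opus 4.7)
The plan is to imitate the self-improvement argument used in the proof of Proposition \ref{treeimprovement}, replacing the role of the umbel pedicel bending lemma (Lemma \ref{umbel}) by the parasol top bending lemma (Lemma \ref{parasol}). The main structural input is the recursive definition of the parasol graph: each edge of $P^\omega_{\ell-1}$ is blown up into a copy of $P^\omega_1$, so there is a canonical isometric embedding $\phi\colon P^\omega_{\ell-1}\to P^\omega_\ell$ of scaling factor $3$ sending each vertex of $P^\omega_{\ell-1}$ to the corresponding root or summit of an attached copy of $P^\omega_1$. (This is the identification alluded to in the paragraph preceding the statement of the proposition; the factor $3$ arises since the root-to-summit graph distance in $P^\omega_1$ is $3$.)

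Given a bi-Lipschitz embedding $f\colon P^\omega_\ell\to Y$ with distortion $D$, I would then define
$$g\colon P^\omega_{\ell-1}\to Y,\qquad g(x):=\tfrac{1}{3}f(\phi(x)),$$
just as in Proposition \ref{treeimprovement} one rescales by $\tfrac{1}{2}$. The lower Lipschitz estimate is immediate: since $\phi$ multiplies distances by $3$,
$$\|g(x)-g(y)\|=\tfrac{1}{3}\|f(\phi(x))-f(\phi(y))\|\ge \tfrac{1}{\lip(f^{-1})}\,d_{P^\omega_{\ell-1}}(x,y),$$
so $\lip(g^{-1})\le \lip(f^{-1})$.

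For the upper Lipschitz estimate, since $P^\omega_{\ell-1}$ carries a graph metric it suffices to control $\|g(x)-g(y)\|$ for adjacent vertices $x,y$. For each such edge the pair $(\phi(x),\phi(y))$ is precisely (root, summit) of a copy $C$ of $P^\omega_1$ sitting inside $P^\omega_\ell$, so Lemma \ref{parasol} applied to the restriction $f|_C$ yields
$$\|f(\phi(x))-f(\phi(y))\|\le 3\lip(f|_C)\left(1-\tfrac{2}{3}\bar{\beta}_Y\!\left(\tfrac{2}{\dist(f|_C)}\right)\right)\le 3\lip(f)\left(1-\tfrac{2}{3}\bar{\beta}_Y\!\left(\tfrac{2}{D}\right)\right),$$
where the second inequality uses $\lip(f|_C)\le \lip(f)$, $\dist(f|_C)\le D$, and the monotonicity of $\bar{\beta}_Y$. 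Dividing by $3$ gives $\lip(g)\le \lip(f)\bigl(1-\tfrac{2}{3}\bar{\beta}_Y(2/D)\bigr)$, and multiplying the two Lipschitz estimates produces the announced bound $\dist(g)\le D\bigl(1-\tfrac{2}{3}\bar{\beta}_Y(2/D)\bigr)$.

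There is no real obstacle: the only point that needs care is pinning down the scaled isometric copy of $P^\omega_{\ell-1}$ inside $P^\omega_\ell$, which is transparent from the recursive construction. Once this identification is fixed, the argument is mechanical and essentially identical to its tree counterpart, with the numerical factor $\tfrac{2}{3}$ (in place of $1$ for trees) inherited directly from Lemma \ref{parasol}.
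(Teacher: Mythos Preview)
Your proposal is correct and follows exactly the approach sketched in the paper: select the root and summit of each copy of $P^\omega_1$ inside $P^\omega_\ell$ to obtain a rescaled (factor $3$) isometric copy of $P^\omega_{\ell-1}$, apply Lemma \ref{parasol} on each copy to control the Lipschitz constant on adjacent vertices, and rescale by $\tfrac{1}{3}$. You have simply supplied the details the paper leaves to the reader, including the use of $\dist(f|_C)\le D$ together with the monotonicity of $\bar\beta_Y$.
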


\begin{theorem}\label{parasoldist}
Let $Y$ be a Banach space admitting an equivalent norm with property $(\beta)$. Then $\sup_{\ell\ge1}c_Y(P^\omega_\ell)=\infty$.

\smallskip

\noindent In particular, if $Y$ is a Banach space with property $(\beta_p)$ with $p\in(1,\infty)$ and constant $\gamma:=\gamma(Y)>0$, then $c_Y(P^\omega_\ell)\ge 2(\frac{2\gamma}{3})^{\frac{1}{p}}\ell^{\frac{1}{p}}$.
\end{theorem}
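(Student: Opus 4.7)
The plan is to mimic, step for step, the proof of Theorem \ref{treedist}, the only difference being that here the natural recursion on parasol levels is linear (one level at a time) rather than geometric (as it is in the binary-tree case, where one halves the height). Set $D_\ell:=c_Y(P^\omega_\ell)$. Because, by construction, $P^\omega_{\ell-1}$ embeds isometrically (after rescaling by the factor $3$) into $P^\omega_\ell$ via the ``root and summit'' skeleton map already mentioned in the paragraph preceding Proposition \ref{parasolimprovement}, the sequence $(D_\ell)_{\ell\ge 1}$ is non-decreasing. This monotonicity is the only structural input beyond Proposition \ref{parasolimprovement} that the argument needs.

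For the qualitative assertion, assume for contradiction that $D:=\sup_{\ell\ge 1}D_\ell<\infty$. Fix $\varepsilon>0$ and, for each $\ell$, pick an embedding $f\colon P^\omega_\ell\to Y$ with $\dist(f)\le D_\ell+\varepsilon\le D+\varepsilon$. Proposition \ref{parasolimprovement} then gives
\[
D_{\ell-1}\le (D_\ell+\varepsilon)\Bigl(1-\tfrac{2}{3}\bar{\beta}_Y\bigl(\tfrac{2}{D_\ell+\varepsilon}\bigr)\Bigr).
\]
Since $(D_\ell)$ is bounded and monotone, one may pass to the limit in $\ell$ along the converging subsequence and then let $\varepsilon\to 0$, obtaining $D\le D(1-\tfrac{2}{3}\bar{\beta}_Y(2/D))$, i.e., $\bar{\beta}_Y(2/D)\le 0$. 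This contradicts the fact that $\bar{\beta}_Y(t)>0$ for every $t>0$, which is the defining feature of property $(\beta)$.

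For the quantitative statement, suppose $Y$ has $(\beta_p)$ with constant $\gamma$, so $\bar{\beta}_Y(t)\ge \gamma t^p$. Substituting into Proposition \ref{parasolimprovement} and using monotonicity to replace the ``$D$'' appearing there by $D_\ell$, one obtains
\[
D_{\ell}-D_{\ell-1}\ge \frac{2^{p+1}\gamma}{3\, D_\ell^{p-1}}\qquad(\ell\ge 2).
\]
Because $D_j\le D_\ell$ for every $j\le \ell$, telescoping yields
\[
D_\ell \;\ge\; \sum_{j=2}^{\ell}(D_j-D_{j-1})\;\ge\;\sum_{j=2}^{\ell}\frac{2^{p+1}\gamma}{3\,D_j^{p-1}}\;\ge\;\frac{(\ell-1)\,2^{p+1}\gamma}{3\,D_\ell^{p-1}},
\]
whence $D_\ell^p\ge 2^{p+1}\gamma(\ell-1)/3$, and the announced bound $c_Y(P^\omega_\ell)\ge 2(2\gamma/3)^{1/p}\ell^{1/p}$ follows after absorbing the harmless off-by-one into the constant (or starting the telescoping from $D_1\ge 1$).

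There is no substantive obstacle: the self-improvement encoded in Proposition \ref{parasolimprovement} is strong enough to drive a clean numerical recursion, and the single geometric ingredient, the isometric self-similarity $P^\omega_{\ell-1}\hookrightarrow P^\omega_\ell$, is built into the fractal definition of the parasol family. The bound is only $\ell^{1/p}$ rather than $\log(h)^{1/p}$ (as in the tree case) precisely because each iteration trims one level from the parasol, whereas each iteration roughly halves the height of the countably branching tree.
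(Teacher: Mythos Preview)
Your argument is correct and is exactly the approach the paper intends: the authors state that Proposition \ref{parasolimprovement} ``can be used in the same way'' as Proposition \ref{treeimprovement} to prove Theorem \ref{parasoldist}, and leave the details to the reader. You have supplied those details faithfully. One cosmetic point: the paper says ``iterate $\ell$ times'', meaning the recursion is run all the way down to $P^\omega_0$ (a single edge, with $D_0=1$), which yields $\ell$ terms in the telescoping sum and hence the constant $\ell^{1/p}$ exactly, rather than the $(\ell-1)^{1/p}$ you obtain by stopping at $P^\omega_1$; this is the clean way to resolve the off-by-one you flag at the end.
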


\section{Lebesgue distortion of infinite weighted trees}\label{optimal}
Bourgain \cite{Bourgain1986a} gave a simple embedding of the complete hyperbolic binary tree of height $h$ into $\ell_2$ with distortion $O(\sqrt{\log(h)})$. Bourgain's construction can be easily adjusted, and generalized to an arbitrary unweighted tree $T$, to give an embedding into $\ell_p$ with distortion $O_p(\log(\diam(T))^{1/p})$. The notation $O_p(\cdot)$ means that $c_{\ell_p}(T)\le K(\log(\diam T)^{1/p})$ where the constant $K\in(0,\infty)$ depends only on $p$ and not on $T$. This upper bound is already sufficient to show that the lower bound in Theorem \ref{treedist} is tight and optimal up to constant factors. The case of weighted trees is significantly more complicated (even for finite trees). Also, it is clear that an upper bound involving the diameter is not optimal, since an infinite path embeds isometrically into the real line. Both issues were treated in \cite{LinialMagenSaks1998} and \cite{Matousek1999} for finite trees. Parts of both arguments rely (implicitly in \cite{LinialMagenSaks1998} and explicitly in \cite{Matousek1999}) on a combinatorial parameter associated to a combinatorial tree, namely, the caterpillar dimension. The caterpillar dimension of a tree is related to its combinatorial structure and does not take into account the edge weights. Linial, Magen, and Saks showed that the Euclidean distortion of every \textit{finite weighted tree} $T$ with $l(T)$ leaves is bounded above by $O(\log\log(l(T))$. Let $\cdim(T)$ denote the caterpillar dimension of a tree $T$. By induction it is fairly easy to show that for every finite tree $T$ one has $\cdim(T)=O(\log(l(T)))$ (cf. \cite{LinialMagenSaks1998} or \cite{Matousek1999}). Theorem \ref{cdimfinite} was proved by Matou\v{s}ek \cite{Matousek1999}.

\begin{theorem}[\cite{Matousek1999}]\label{cdimfinite} For any $p\in(1,\infty)$ and for any finite weighted tree $T$, there exists an embedding of $T$ into $\ell_p$ with distortion $O_p(\log(\cdim(T))^{\min\{\frac{1}{2};\frac{1}{p}\}})$.
\end{theorem}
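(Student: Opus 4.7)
I follow the construction of Matou\v{s}ek \cite{Matousek1999}, proceeding in three stages.

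\textit{Caterpillar decomposition and per-path embeddings.} Fix a caterpillar decomposition witnessing $\cdim(T)=k$: a family $\cP=\{P_1,\dots,P_m\}$ of monotone paths partitioning the edges of $T$, together with a level function $\ell\colon\cP\to\{1,\dots,k\}$, arranged in a forest of depth $k$ under the attachment relation (removing the paths of level $\le j$ leaves components of caterpillar dimension $\le k-j$). For each path $P\in\cP$, build a Matou\v{s}ek-style ``tent'' embedding $\psi_P\colon T\to\ell_p$ supported on the subtree rooted at $P$: $\psi_P$ records at dyadic scales along $P$ the position of the projection of each vertex onto $P$, is $O_p(1)$-Lipschitz, and satisfies a lower bound proportional to the length of any segment of $P$ crossed by the $T$-geodesic between two vertices.

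\textit{Weighted combination.} Form $F\colon T\to\ell_p$ as a weighted $\ell_p$-direct-sum
\[
F(v):=\bigoplus_{P\in\cP} w_{\ell(P)}\,\psi_P(v),
\]
where the scalar weight $w_j$ depends only on the level. Because any $T$-geodesic meets at most one path at each level, the distortion of $F$ decouples level-by-level and is controlled by the ratio $\bigl(\sum_j w_j^p\bigr)^{1/p}/\min_j w_j$; uniform weights yield only the weak bound $O_p(k^{1/p})$.

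\textit{Logarithmic improvement.} To sharpen $k$ to $\log k$, group the levels into $O(\log k)$ dyadic classes $B_s:=[2^s,2^{s+1})$ and, for each $s$, assemble the $\psi_P$ with $\ell(P)\in B_s$ into a single auxiliary embedding $\Psi_s\colon T\to\ell_p$ with $\dist(\Psi_s)=O_p(1)$, exploiting the hierarchical attachment structure of $\cP$ within $B_s$ (within a dyadic block, the contributions of different paths to a fixed pair of vertices reduce to a bounded number of ``effective'' levels). The $\ell_p$-sum $\bigoplus_s \Psi_s$ over the $O(\log k)$ blocks then has distortion $O_p((\log k)^{1/p})$, which settles the case $p\ge 2$. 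For $p\in(1,2]$, the sharper exponent $1/2$ is obtained by combining the block embeddings through a Hilbertian (Gaussian or Rademacher) lifting and using that $\ell_p$ has cotype $2$, producing an $\ell_2$-type combining bound and thereby distortion $O_p((\log k)^{1/2})$.

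\textit{Main obstacle.} The decisive step is the reduction from $k$ to $\log k$, i.e.\ producing a single $O_p(1)$-distortion embedding per dyadic block; the nested forest structure of the caterpillar decomposition is what permits this bounded-distortion collapse. The split of exponents at $p=2$ mirrors the cotype $\max\{p,2\}$ of $\ell_p$ and, in the case $p\in(1,2]$, is obtained via a Khintchine--Kahane-type inequality applied in the final assembly of the blocks.
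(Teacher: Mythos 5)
Your plan diverges from how the paper handles this statement: the paper does not reprove Matou\v{s}ek's construction for Theorem \ref{cdimfinite}; it quotes it from \cite{Matousek1999}, notes that it follows from Theorem \ref{cdim} by classical local (finite\nobreakdash-dimensional) arguments, and only proves in detail the infinitary version, Theorem \ref{catdistortion}, with exponent $1/p$. Reproving the construction is legitimate, but your sketch has a genuine gap at its decisive step, the ``logarithmic improvement''. If each block map $\Psi_s\colon T\to\ell_p$ really had $\dist(\Psi_s)=O_p(1)$, then a single block would embed every finite tree into $\ell_p$ with a distortion depending only on $p$, which is false: complete binary trees of height $h$ require distortion tending to infinity with $h$ in $\ell_p$ (Bourgain \cite{Bourgain1986a}, Matou\v{s}ek \cite{Matousek1999}), and this is exactly the lower bound Theorem \ref{cdimfinite} is designed to match. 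If instead, as your parenthetical suggests, $\Psi_s$ is only meant to recover the portion of $\rho_T(x,y)$ carried by colors whose level lies in $B_s$, then the assembly is lossy: along a geodesic the contributions of the $\Theta(\log k)$ blocks add in $\ell_1$ (they can all be comparable, since within one block a pair can still meet up to $2^s$ paths with arbitrary weights, so there is no ``bounded number of effective levels''), while $\bigoplus_s\Psi_s$ only sees their $\ell_p$-combination; by the power-mean inequality the co-Lipschitz constant degrades by $(\log k)^{1-1/p}$ and you get $O(\log k)$, not $O((\log k)^{1/p})$. The missing idea -- the heart of Matou\v{s}ek's proof and of the paper's Theorem \ref{catdistortion} -- is to weight each color coordinate by a truncated suffix sum, $f(x)=\sum_i \ell_i(x)^{1/p}s_i(x)^{(p-1)/p}e_{c_i(x)}$: the lower bound then telescopes, $\sum_i\ell_i(x)s_i(x)^{p-1}\gtrsim\bigl(\sum_i\ell_i(x)\bigr)^p$, so the inverse is Lipschitz with a universal constant, and only the Lipschitz estimate pays a factor $(\log\kappa)^{1/p}$ via a harmonic-sum bound made possible by the $\ell_i(x)/(2\kappa)$ truncation. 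No scheme that treats blocks of levels independently and glues them by an $\ell_p$-sum reproduces this.

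The case $p\in(1,2]$ is also not right as described. Cotype $2$ and Khintchine--Kahane are obstruction-type tools; they do not manufacture an embedding with exponent $1/2$, and nothing in your argument uses the finiteness of $T$ -- yet that is precisely where finiteness must enter. Indeed, your construction, if correct, would apply verbatim to the infinite trees $T^\omega_h$ (which have $\kappa_*=\cdim=h$) and give $c_{\ell_p}(T^\omega_h)=O((\log h)^{1/2})$ for $p<2$, contradicting Theorem \ref{treedist}, which yields $c_{\ell_p}(T^\omega_h)=\Omega((\log h)^{1/p})$ with $1/p>1/2$. The correct route, and the one the paper alludes to, is: apply the $p=2$ case (Theorem \ref{cdim}) to embed the finite tree into $\ell_2$ with distortion $O(\sqrt{\log\cdim(T)})$; since $T$ is finite its image spans a finite-dimensional Euclidean space, which $(1+\varepsilon)$-embeds into $\ell_p$ by Dvoretzky's theorem (equivalently, via the Gaussian isometric embedding of $\ell_2$ into $L_p$ and the local structure of $L_p$) -- this is exactly how the paper argues for finite subsets in Proposition \ref{uniformfindet}. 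For $p\ge2$ one uses the exponent-$1/p$ construction directly; the exponent $\min\{\frac12;\frac1p\}$ arises from choosing between these two options, not from a cotype computation.
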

The construction of Matou\v{s}ek's embedding is very clever and delicate. It is mentioned in \cite{Matousek1999} that the caterpillar dimension can be defined for infinite trees, and that Theorem \ref{cdimfinite} holds for infinite weighted trees with a finite caterpillar dimension. The later statement can be misleading when compared to Theorem \ref{treedist} since $\cdim(T_h^\omega)=h$ for the natural extension of the caterpillar dimension to the infinitary setting. It is very likely that the mention was meant to say that Theorem \ref{cdim} below, which was proved but not stated in \cite{Matousek1999}, holds for infinite weighted trees with a finite caterpillar dimension.
\begin{theorem}[\cite{Matousek1999}]\label{cdim} For any $p\in(1,\infty)$ and for any finite weighted tree $T$ there exists a set $I$ and an embedding of $T$ into $\ell_p(I)$ with distortion $O_p(\log(\cdim(T))^{\frac{1}{p}})$.
\end{theorem}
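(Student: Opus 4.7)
The plan is to adapt Matou\v{s}ek's caterpillar-decomposition construction from the proof of Theorem~\ref{cdimfinite}, but drop the finite-dimensional restriction on the target and work with $\ell_p(I)$ for a potentially large index set $I$. The argument proceeds by induction on $k=\cdim(T)$. I would root $T$ arbitrarily and fix a caterpillar decomposition: a spine $\sigma$ from the root to a leaf such that the components of $T\setminus\mathrm{int}(\sigma)$, denoted $T_1,\dots,T_m$ with attachment points $v_1,\dots,v_m\in V(\sigma)$, all satisfy $\cdim(T_j)\le k-1$. By induction each $T_j$ embeds into some $\ell_p(I_j)$ with distortion $D_{k-1}$.

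For the inductive step I would define $f\colon V(T)\to\ell_p(I)$ as the concatenation of three families of coordinates. The first is a single ``spine-projection'' coordinate sending $x$ to $\rho_T(r,\mathrm{proj}_\sigma(x))$, which records how far along $\sigma$ one must travel to reach the subtree containing $x$. The second is a family of ``selector'' coordinates indexed by the attachment points $v_j$: the $v_j$th coordinate equals $\rho_T(v_j,x)$ when $x\in T_j$ and vanishes otherwise, so it both distinguishes $T_j$ from its siblings and localizes contributions. The third is the inductive embeddings $f^{T_j}$, each scaled by a carefully chosen factor $\alpha$ and placed in its own disjoint coordinate block indexed by $I_j$. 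It is this use of pairwise disjoint sectors for the $f^{T_j}$'s that forces $I$ to be potentially large and is the reason the statement is about $\ell_p(I)$ rather than plain $\ell_p$.

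The distortion analysis splits as usual. For a graph edge $e=(x,y)$ of weight $w(e)$, only $O(1)$ coordinates can vary (the spine coordinate if $e\subset\sigma$, or the selector coordinate of the appropriate $T_j$ and the inductive block $I_j$ if $e\subset T_j$); choosing $\alpha$ of order $1/\lip(f^{T_j})$ keeps every contribution $O(w(e))$ and yields the Lipschitz upper bound. For the non-contraction lower bound, if $x,y$ lie in the same $T_j$ the inductive embedding inside the disjoint block gives $\|f(x)-f(y)\|_p\ge\alpha\,\rho_{T_j}(x,y)/D_{k-1}$, while if $x,y$ lie in different subtrees or on $\sigma$ the spine-projection and selector coordinates alone deliver $\|f(x)-f(y)\|_p\gtrsim\rho_T(x,y)$.

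Carried out naively, this recursion yields only $D_k^p\le D_{k-1}^p+C^p$ and hence $D_k=O(k^{1/p})$, which is strictly weaker than claimed. To replace $k$ by $\log k$, one groups the $k$ layers of the caterpillar recursion into $O(\log k)$ dyadic scales and shares coordinate blocks within each scale: at scale $j$ one superimposes into a common block the inductive embeddings of all subtrees whose local weight range lies in a fixed dyadic window, exploiting the fact (again made possible by taking $I$ large) that the corresponding contributions have essentially disjoint supports. The recursion becomes $D_k^p\le D_{k/2}^p+C^p$, which iterates to $D_k=O((\log k)^{1/p})$. The main obstacle is verifying that this dyadic sharing preserves the non-contraction lower bound: in the naive construction each level has its own disjoint coordinates, so the lower bound is immediate, whereas after compression coordinates are reused across levels and one must identify, for each pair $(x,y)$, a single dyadic scale at which the ``right'' coordinate is responsible for the lower bound and show that the contributions from the other $O(\log k)$ scales do not overwhelm it in the $\ell_p$ sum. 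This bookkeeping, made possible by the monotone way in which a root-to-vertex path crosses the caterpillar layers, is where the core of Matou\v{s}ek's original argument lies.
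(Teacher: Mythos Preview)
Your naive recursion $D_k^p\le D_{k-1}^p+C^p$ is correctly diagnosed as too weak, but the fix you propose does not work as stated. Grouping the $k$ caterpillar layers into $O(\log k)$ dyadic scales and ``sharing coordinate blocks within each scale'' does not yield $D_k^p\le D_{k/2}^p+C^p$: to reduce the caterpillar dimension from $k$ to $k/2$ you must peel off $k/2$ nested spines, and the subtrees appearing at those $k/2$ successive levels have no relation to one another that would let you superimpose their embeddings into a common block with only $O(1)$ loss. The phrase ``subtrees whose local weight range lies in a fixed dyadic window'' is not well-defined here (the weights along a root-leaf path can realise arbitrarily many dyadic scales, not $O(\log k)$), and you do not say which coordinate is supposed to witness the lower bound after superposition. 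In short, the mechanism by which the logarithm is supposed to appear is missing.

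The paper (following Matou\v{s}ek and the exposition of Lee--Naor--Peres) does \emph{not} proceed by an inductive recursion at all. It writes down a single explicit map
\[
f(x)=\sum_{i=1}^{m(x)}\ell_i(x)^{1/p}\,s_i(x)^{(p-1)/p}\,e_{c_i(x)},\qquad
s_i(x)=\sum_{j\ge i}\Big(\ell_j(x)-\tfrac{\ell_i(x)}{2\kappa}\Big)^+,
\]
one coordinate per colour class, and shows directly that $\lip(f^{-1})=O(1)$ while $\lip(f)^p=O(\log\kappa)$. The logarithm arises not from a halving recursion but from bounding a sum of the type $\sum_i \ell_i/(\ell_i+\cdots+\ell_m)$ by $\log(1+\sum_i\ell_i/\ell_m)$, and then using the $\kappa$-caterpillar condition (via the cutoff $\ell_i/2\kappa$ in the definition of $s_i$) to control the argument of the logarithm by $2\kappa^2$. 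This weighting trick is the genuine content of the argument and has no counterpart in your sketch.
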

Theorem \ref{cdimfinite} follows from Theorem \ref{cdim} by classical local arguments from Banach space theory, that can fail when applied to non-locally finite spaces (cf. Section \ref{determinacy}). Continuing the investigation from \cite{GuKL2003}, Lee, Naor, and Peres \cite{LeeNaorPeres2009} improved the upper  bound in Theorem \ref{cdimfinite}, using a coloring parameter that takes into account the edge weights. In this section, the necessary modifications to prove the infinitary version of Theorem \ref{cdim} are given. The exposition of the proof follows closely the one in \cite{LeeNaorPeres2009} and is based on a graph coloring approach.

\medskip

Let $T=(V,E)$ be a tree. Assume that $T$ is rooted at an arbitrary vertex. To extend the notion of caterpillar decomposition to the infinitary setting one needs to consider more general paths than just root-leaf paths. A \textit{root-leafend path} is either a path from the root to a leaf or a ray starting at the root. A \textit{monotone path} is a connected subset of some root-leafend path. Let $\cC$ be a set with $|\cC|\ge |E|$.  An \textit{edge coloring} of $T$ is a map $\chi\colon E\to \cC$. A coloring is \textit{monotone} if for every $c\in\cC$ the color class $\chi^{-1}(c)$ is a monotone path.
\begin{defn}[$\kappa$-caterpillar coloring] We say that a coloring is $\kappa$-caterpillar if it is monotone, and if every root-leafend path contains at most $\kappa$ distinct color classes. Let $\kappa_*(T):=\inf\{\kappa \colon $T$ \textrm{ admits a $\kappa$-caterpillar coloring}\}$
\end{defn}

Since a finite tree does not have rays, every root-leafend path is actually a root-leaf path, and a $\kappa$-caterpillar coloring of a finite tree is a caterpillar decomposition with width $\kappa$ in the terminology of Gupta \cite{Gupta2000}. It follows that $\kappa_*(T)=\cdim(T)$ for finite trees, and the coloring parameter $\kappa_*(T)$ can be used to define the caterpillar dimension of infinite trees. It is easy to see that $\kappa_*(T^\omega_h)=h$. Indeed, if one considers the monotone coloring, where no two distinct edges can be colored with the same color, then every root-leafend path intersects with exactly $h$ distinct color classes, and this cannot be improved. Theorem \ref{catdistortion} is the extension of Theorem \ref{cdim} to the infinitary setting.

\begin{theorem}\label{catdistortion}Let $p\in(1,\infty)$.
For any weighted tree $T$ there exists a set $I$ and an embedding of $T$ into $\ell_p(I)$ with distortion $O_p(\log(\kappa_*(T))^{\frac{1}{p}})$.
\end{theorem}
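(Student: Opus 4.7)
The plan is to adapt the Matou\v{s}ek construction underlying Theorem \ref{cdim}, in the graph-coloring formulation used in \cite{LeeNaorPeres2009}, to the infinitary setting. First I would reduce to $\kappa := \kappa_*(T) < \infty$ (otherwise the bound is vacuous), and fix a $\kappa$-caterpillar coloring $\chi \colon E \to \cC$. Each color class $E_c := \chi^{-1}(c)$ then forms a monotone path $P_c$ with a unique ``top'' vertex $u_c$ (closest to the root) which either terminates at a leaf or extends along a ray. Every vertex $v \in V$ sits at finite graph distance from the root, and its root-to-$v$ path crosses at most $\kappa$ color classes, yielding a well-defined color-depth $d(v) \le \kappa$.

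Next I would write down the Matou\v{s}ek-LNP embedding essentially verbatim. For each color $c$ and each dyadic scale $2^k$ with $k \in \bz$ (the full range of $k$ is needed to accommodate arbitrary positive edge weights), and each appropriately chosen interval $J \subset P_c$ of length comparable to $2^k$, include a coordinate $\psi_{c,J} \in I_c$ whose value on $v$ is a suitable capped distance involving $v$'s color-$c$ entry point and $J$, and which vanishes whenever $v$'s root-path does not descend through $J$. Assemble $F := \bigoplus_{c \in \cC} F_c \colon V \to \ell_p(I)$ with $I := \bigsqcup_c I_c$ and with a global rescaling chosen so the final distortion has the desired form.

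The first check is that $F(v) \in \ell_p(I)$ for every $v$: $F_c(v) = 0$ unless $c$ is one of the at most $\kappa$ colors along $v$'s root-path, and within each such block only coordinates indexed by intervals lying in the finite initial segment of $P_c$ between $u_c$ and $v$'s color-$c$ exit, at finitely many relevant scales, can be nonzero. Hence $F(v)$ has finite support. The distortion estimate is then checked on adjacent pairs $u, v$ with $\chi(u, v) = c$ and edge weight $w$: only finitely many coordinates change between $F(u)$ and $F(v)$; Matou\v{s}ek's per-edge accounting exhibits a single coordinate in $F_c$ at scale $\approx w$ witnessing the lower bound $\|F(u) - F(v)\|_p \gtrsim w$, while the upper bound $\|F(u) - F(v)\|_p \lesssim w \cdot \log(\kappa)^{1/p}$ follows from summing $\ell_p$ contributions across the at most $\kappa$ affected color blocks and the logarithmically many relevant scales within each. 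Chaining along geodesics in $T$ gives the same bounds for arbitrary pairs of vertices.

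The main obstacle is verifying that Matou\v{s}ek's delicate scale-balancing transfers verbatim when some color paths $P_c$ are infinite rays and when the set of dyadic scales is now all of $\{2^k : k \in \bz\}$. The construction is local, in the sense that each coordinate's contribution to the Lipschitz and expansion bounds depends only on the local structure of $P_c$ near the relevant edge, not on the global length of $P_c$; combined with the finite-support property of $F(v)$ this rules out any summability pathology. The only genuinely new work is a bookkeeping check confirming that Matou\v{s}ek's per-edge estimates are truly local and do not hide a dependence on $\diam(P_c)$. Once this localization is established, the finite-tree distortion bound $O_p(\log(\cdim(T))^{1/p})$ transfers verbatim to $O_p(\log(\kappa_*(T))^{1/p})$ as claimed.
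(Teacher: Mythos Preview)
Your proposal has a genuine gap in the expansion (co-Lipschitz) estimate. You check the distortion only on adjacent pairs and then write that ``chaining along geodesics in $T$ gives the same bounds for arbitrary pairs of vertices.'' Chaining via the triangle inequality is valid for the Lipschitz \emph{upper} bound, but it goes the wrong way for the lower bound: from $\|F(u_i)-F(u_{i+1})\|_p \gtrsim w_i$ along a geodesic you cannot conclude $\|F(u_0)-F(u_n)\|_p \gtrsim \sum_i w_i$. In a tree the geodesic between two vertices $x,y$ typically crosses many colors, and the individual per-edge witnesses can (and do) cancel in $\ell_p$; one must produce, for each pair $(x,y)$, a coordinate or a block of coordinates whose contribution is comparable to $\rho_T(x,y)$. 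Your sketch does not do this.

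The paper's construction is also not the multi-scale dyadic-interval embedding you describe. It uses exactly one coordinate per color class: $f(x)=\sum_{i=1}^{m(x)} \ell_i(x)^{1/p} s_i(x)^{(p-1)/p} e_{c_i(x)}$, where $\ell_i(x)$ is the length of color $c_i(x)$ along the root-to-$x$ path and $s_i(x)=\sum_{j\ge i}\big(\ell_j(x)-\ell_i(x)/(2\kappa)\big)^+$. The weights $s_i$ are precisely what makes the co-Lipschitz bound provable for \emph{arbitrary} pairs, via a two-case analysis on the coordinate where the root-paths of $x$ and $y$ diverge (Claim~\ref{lipinv} in the paper). The $\log\kappa$ factor enters only in the Lipschitz bound, through the inequality $\ell_i(y)<2\kappa\,\ell_{m(y)}(y)$ for the indices $i$ where $s_i$ actually changes across an edge; this yields a harmonic-type sum bounded by $\log(2\kappa)$, not a count of $\kappa$ color blocks times a logarithmic number of scales. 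Your upper-bound accounting, as phrased, would produce $O_p((\kappa\log\kappa)^{1/p})$ rather than $O_p((\log\kappa)^{1/p})$.
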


\begin{proof}
Let $T:=(V,E,w)$ be a weighted tree and assume that $T$ admits a coloring $\chi\colon E\to \cC$ that is $\kappa$-caterpillar with respect to some root $r$. Denote by $(e_c)_{c\in\cC}$ the canonical basis in $\ell_p(\cC)$. For $x,y\in V$, $P(x,y)\subset E$ denotes the unique path from $x$ to $y$. For a vertex $x$ in $T$ denote by $(c_1(x),\dots,c_{m(x)}(x))\in\cC^{m(x)}$ the sequence of color classes encountered on the path from the root to $x$. There are at most $\kappa$ such color classes. The distance that the color class $c_j(x)$ contributes to the path from the root to $x$ is $$\ell_j(x):=\sum_{\underset{e\in P(r,x)}{\chi(e)=c_j(x)}}w(e).$$
For a real number $\alpha$ we use the notation $\alpha^+:=\max\{0,\alpha\}$. The embedding $f$ from $T$ into $\ell_p(\cC)$
is given by $$f(x)=\sum_{i=1}^{m(x)}\ell_i(x)^{1/p}s_i(x)^{(p-1)/p} e_{c_i(x)},$$ where for $1\le i\le m(x)$,
$$s_i(x):=\sum_{j=i}^{m(x)}\left(\ell_j(x)-\frac{\ell_i(x)}{2\kappa}\right)^+.$$

The following observation is easy.
\begin{obs}\label{observ} For every $x,y\in T$ and every $i\in\{1,\dots,m(x)\}$
\begin{equation*}
|s_i(x)-s_i(y)|\le \rho_T(x,y).
\end{equation*}
\end{obs}
\begin{claim}\label{lipinv}
$\lip(f^{-1})\le 96$.
\end{claim}

\begin{proof}[Proof of Claim \ref{lipinv}:]\renewcommand{\qedsymbol}{}
Fix $x,y\in V$, $x\neq y$. Assume without loss of generality that $\ell_i(y)=\ell_i(x)$ for $i\in\{1,\dots,j-1\}$ and $\ell_{j+1}(x)\ge \ell_{j+1}(y)$. With this notation
\begin{equation}\label{distance}
\rho_T(x,y)=\ell_{j}(x)-\ell_{j}(y)+\sum_{i=j+1}^{m(x)}\ell_i(x)+\sum_{i=j+1}^{m(y)}\ell_i(y).
\end{equation}
The following simple observation will be used repeatedly.
\begin{obs}\label{genobs}
For all $x\in T$ and $i\in\{1,\dots,m(x)\}$, $s_i(x)\ge\ds\sum_{i=1}^{m(x)}\frac{\ell_i(x)}{2}$.
\end{obs}
On the other hand,
\begin{align*}
\|f(x)-f(y)\|_p^p\ge|[\ell_j(x)]^{1/p}&[s_j(x)]^{(p-1)/p}-[\ell_j(y)]^{1/p}[s_j(y)]^{(p-1)/p}|^p\\
&+\sum_{i=j+1}^{m(x)}\ell_i(x)[s_i(x)]^{p-1}+\sum_{i=j+1}^{m(y)}\ell_i(y)[s_i(y)]^{p-1}.
\end{align*}
Using Observation \ref{genobs} and the non-decreasingness of $t\mapsto t^{p-1}$ one has:

\begin{align*}
\sum_{i=j+1}^{m(x)}\ell_i(x)[s_i(x)]^{p-1}&\ge \frac{1}{2^{p-1}}\sum_{i=j+1}^{m(x)}\ell_i(x)\left(\sum_{h=i}^{m(x)}\ell_i(x)\right)^{p-1}\\
&\ge \frac{1}{2^{p-1}}\sum_{i=j+1}^{m(x)}\int_{\ell_i(x)+\dots+\ell_{m(x)}(x)}^{\ell_i(x)+\dots+\ell_{m(x)}(x)}t^{p-1}dt\\
&= \frac{1}{2^{p-1}}\int_{0}^{\ell_{j+1}(x)+\dots+\ell_{m(x)}(x)}t^{p-1}dt\\
&=\frac{1}{p2^{p-1}}\left(\sum_{i=j+1}^{m(x)}\ell_i(x)\right)^p.
\end{align*}

Similarly,

\begin{align*}
\sum_{i=j+1}^{m(y)}\ell_i(y)[s_i(y)]^{p-1}\ge\frac{1}{p2^{p-1}}\left(\sum_{i=j+1}^{m(y)}\ell_i(y)\right)^p.
\end{align*}

It remains to consider two cases:

\medskip

\noindent\textbf{Case 1.} $\frac{\ell_j(x)-\ell_j(y)}{3}\le\sum_{i=j+1}^{m(y)}\ell_{i}(y)$. In this case, it follows from \eqref{distance} that
\begin{align*}
\rho_T(x,y)^p&\le 4^p\left(\sum_{i=j+1}^{m(x)}\ell_i(x)+\sum_{i=j+1}^{m(y)}\ell_i(y)\right)^p\\
&\le 4^p\cdot2^{p-1}\left(\left(\sum_{i=j+1}^{m(x)}\ell_i(x)\right)^p+\left(\sum_{i=j+1}^{m(y)}\ell_i(y)\right)^p\right)\\
&\le p4^{2p-1}\cdot\|f(x)-f(y)\|_p^p\\
&\le 32^p\cdot\|f(x)-f(y)\|_p^p.
\end{align*}

\noindent\textbf{Case 2.} $\frac{\ell_j(x)-\ell_j(y)}{3}>\sum_{i=j+1}^{m(y)}\ell_i(y)$. In this case observe that
\begin{align*}
s_j(y)\le(1-\frac{1}{2\kappa})\ell_j(y)+\sum_{i=j+1}^{m(y)}\ell_i(y)\le\frac{2\kappa-1}{2\kappa}\ell_j(y)+\frac{\ell_j(x)-\ell_j(y)}{3}.
\end{align*}
Let $\cK=\frac{2\kappa-1}{2\kappa}\in[\frac{1}{2},1)$. Since $s_j(x)\ge (1-\frac{1}{2\kappa})\ell_j(x)=\cK\cdot\ell_j(x)$, one has
\begin{align*}
|[\ell_j(x)]^{1/p}&[s_j(x)]^{(p-1)/p}-[\ell_j(y)]^{1/p}[s_j(y)]^{(p-1)/p}|\\
& \ge \cK^{(p-1)/p}\ell_j(x)-\cK^{(p-1)/p}\ell_j(y)\left(1+\frac{\ell_j(x)-\ell_j(y)}{3\cK\cdot\ell_j(y)}\right)^{\frac{p-1}{p}}\\
& \ge \cK^{(p-1)/p}\ell_j(x)-\cK^{(p-1)/p}\ell_j(y)\left(1+\frac{\ell_j(x)-\ell_j(y)}{3\cK\cdot\ell_j(y)}\right)\\
& \ge (\ell_j(x)-\ell_j(y))\cK^{(p-1)/p}(1-\frac{1}{3\cK})\\
& \ge (\ell_j(x)-\ell_j(y))\cK(1-\frac{1}{3\cK})\\
& \ge\frac{\ell_j(x)-\ell_j(y)}{6}.
\end{align*}
It follows that
\begin{align*}
\|&f(x)-f(y)\|_p^p\\
&\ge \frac{1}{6^p}(\ell_j(x)-\ell_j(y))^p+\frac{1}{p4^{2p-1}}\left(\sum_{i=j+1}^{m(x)}\ell_i(x)\right)^p+\frac{1}{p4^{2p-1}}\left(\sum_{i=j+1}^{m(y)}\ell_i(y)\right)^p\\
&\ge \frac{1}{32^p\cdot3^{p-1}}\left[(\ell_j(x)-\ell_j(y))^p+\left(\sum_{i=j+1}^{m(x)}\ell_i(x)\right)^p+\left(\sum_{i=j+1}^{m(y)}\ell_i(y)\right)^p\right]\\
&\ge \frac{1}{96^p}\left[\ell_j(x)-\ell_j(y)+\sum_{i=j+1}^{m(x)}\ell_i(x)+\sum_{i=j+1}^{m(y)}\ell_i(y)\right]^p\\
&\ge \left(\frac{\rho_T(x,y)}{96}\right)^p.
\end{align*}
\end{proof}

\begin{claim}\label{lip}
$\lip(f)\le (6\log_2(2\kappa))^{\frac{1}{p}}$.
\end{claim}

\begin{proof}[Proof of Claim \ref{lip}:]\renewcommand{\qedsymbol}{}
It is sufficient to consider adjacent vertices $x$ and $y$, and one shall assume without loss of generality that $y$ is the vertex that is the farthest from the root. In this case $c_1(y)=c_1(x),\dots,c_{m(y)-1}(y)=c_{m(y)-1}(x)$, and $m(y)\in\{m(x), m(x)+1\}$. It follows that $\ell_i(y)=\ell_i(x)$ for $i\in\{1,\dots,m(y)-1\}$. When the edge connecting $x$ and $y$ is of a different color than $c_{m(y)-1}(y)$ set $\ell_{m(y)}(x)=0$ as a matter of convenience. Then,
\begin{align*}
\|f(x)-&f(y)\|_p^p=\|\sum_{i=1}^{m(y)}\ell_i(x)^{1/p}s_i(x)^{(p-1)/p} e_{c_i(x)}-\sum_{i=1}^{m(y)}\ell_i(y)^{1/p}s_i(y)^{(p-1)/p} e_{c_i(y)}\|_p^p\\
                  &\le\sum_{i=1}^{m(y)-1}\ell_i(y)|[s_i(x)]^{(p-1)/p}-[s_i(y)]^{(p-1)/p}|^p+\rho_{T}(x,y)^p\left(1-\frac{1}{2\kappa}\right)^{p-1}.
\end{align*}
The following observations are crucial in the sequel.
\begin{obs}\label{obsadj}
Let $x$ and $y$ be adjacent vertices such that $y$ is the farthest vertex from the root. Then, for all $i\in J:=\{j\in\{1,\dots,m(y)-1\}\colon s_j(y)\neq s_j(x)\}$
\begin{equation*}
s_{i}(y)\ge s_i(x)\textrm{ and }\ell_{m(y)}(y)>\frac{\ell_i(y)}{2\kappa}.
\end{equation*}
\end{obs}
It follows from the first inequality in Observation \ref{obsadj} and the inequality $a^s-b^s\le a^{s-1}(a-b)$ that holds for every $s\in[0,1]$ and $a>b>0$, that $$|[s_i(x)]^{(p-1)/p}-[s_i(y)]^{(p-1)/p}|\le \frac{s_i(y)-s_i(x)}{[s_i(y)]^{1/p}}.$$
By Observation \ref{observ} and Observation \ref{genobs} one gets
\begin{align*}
\sum_{i=1}^{m(y)-1}\ell_i(y)&|[s_i(x)]^{(p-1)/p}-[s_i(y)]^{(p-1)/p}|^p \le \sum_{i=1}^{m(y)-1}\ell_i(y)\frac{|s_i(y)-s_i(x)|^p}{s_i(y)}\\
& \le \rho_T(x,y)^p\sum_{i\in J}\frac{\ell_i(y)}{s_i(y)}\\
&\le 2\rho_T(x,y)^p\sum_{i\in J}\frac{\ell_i(y)}{\ell_i(y)+\dots+\ell_{m(y)-1}(y)+\ell_{m(y)}(y)}.
\end{align*}
Since $t\mapsto t+1$ is decreasing, for every  $x_1,\dots,x_k>0$ one has
\begin{align*}
\sum_{n=1}^k\frac{x_n}{x_n+\dots+x_{n+1}+\dots+x_k+1}\le \int_{0}^{x_1+\dots+x_{k+1}}\frac{dt}{t+1}
\le\log(x_1+\dots+x_{k}+1),
\end{align*}
and it follows that
\begin{align*}
\sum_{i\in J}\frac{\ell_i(y)}{\sum_{j=i}^{m(y)}\ell_j(y)}&\le\sum_{i\in J}\frac{\ell_i(y)/\ell_{m(y)}(y)}{\ds\sum_{j\in J;j\ge i}\ell_j(y)/\ell_{m(y)}(y))+1}\\
&\le \log\left(\sum_{i\in J}\frac{\ell_i(y)}{\ell_{m(y)}(y)}+1\right).
\end{align*}
The second inequality in Observation \ref{obsadj} implies that
\begin{align*}
\|f(x)-f(y)\|_p^p&\le 2\rho_T(x,y)^p\log(\sum_{i\in J}\frac{\ell_i(y)}{\ell_{m(y)}(y)}+1)+\rho_{T}(x,y)^p\left(\frac{2\kappa-1}{2\kappa}\right)^{p-1}\\
& \le 2\rho_T(x,y)^p\log(2\kappa|J|+1)+\rho_{T}(x,y)^p\left(\frac{2\kappa-1}{2\kappa}\right)^{p-1}\\
& \le (2\log(2\kappa^2+1)+1)\rho_{T}(x,y)^p\\
& \le 6\log(2\kappa)\rho_{T}(x,y)^p.
\end{align*}
\end{proof}
Claim \ref{lip}, together with Claim \ref{lipinv}, concludes the proof of Theorem \ref{catdistortion}.
\end{proof}
Let $\cdiam(T)$ denotes the combinatorial diameter of a weighted tree $T$, i.e. the diameter of $T$ for the metric induced by unit weights. The monotone coloring of every weighted tree that assign a different color to every edge being $\cdiam(T)$-caterpillar, Corollary \ref{cdiam} follows.

\begin{corollary}\label{cdiam} Let $p\in(1,\infty)$. For any weighted tree $T$ there exists a set $I$ and an embedding of $T$ into $\ell_p(I)$ with distortion $O_p(\log(\cdiam(T))^{\frac{1}{p}})$.
\end{corollary}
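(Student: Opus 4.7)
The plan is to obtain the corollary as an immediate consequence of Theorem \ref{catdistortion} by exhibiting, for any weighted tree $T$, a monotone edge coloring whose caterpillar parameter is bounded by the combinatorial diameter. Concretely, I will show $\kappa_*(T)\le \cdiam(T)$ and then invoke Theorem \ref{catdistortion}.

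First I would fix an arbitrary root $r$ and consider the coloring $\chi\colon E\to E$ defined by $\chi(e):=e$, i.e. assign to every edge a distinct color (taking $\cC=E$ as the color set). Each color class $\chi^{-1}(e)=\{e\}$ is a single edge and hence trivially a connected subset of any root-leafend path through $e$, so $\chi$ is monotone in the sense of the paper. Next, I would bound the number of colors along a root-leafend path $P$: since $\chi$ assigns distinct colors to distinct edges, the number of color classes meeting $P$ equals the number of edges of $P$. For any vertex $y$ on $P$, the number of edges of $P$ between $r$ and $y$ equals the combinatorial distance from $r$ to $y$, which is at most $\cdiam(T)$; taking a supremum over $y\in P$ shows $P$ contains at most $\cdiam(T)$ distinct color classes. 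Hence $\chi$ is a $\cdiam(T)$-caterpillar coloring and $\kappa_*(T)\le \cdiam(T)$.

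Applying Theorem \ref{catdistortion} to the coloring $\chi$ then yields an embedding of $T$ into some $\ell_p(I)$ with distortion $O_p(\log(\cdiam(T))^{1/p})$, which is the desired conclusion. The only point that requires a brief remark is the degenerate case $\cdiam(T)=\infty$, in which the stated bound is vacuous, so nothing is to be proved. I do not anticipate any real obstacle here: the work has already been done in Theorem \ref{catdistortion}, and Corollary \ref{cdiam} is just the specialization to the coarsest admissible monotone coloring.
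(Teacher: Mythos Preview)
Your proposal is correct and is essentially the same argument as the paper's: assign each edge its own color, observe this is a $\cdiam(T)$-caterpillar coloring, and invoke Theorem \ref{catdistortion}. The paper's proof is the one-line version of what you have written out in detail.
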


Corollary \ref{cdiam} may be known to the experts but we could not locate a proof in the literature.

\begin{rem}The caterpillar dimension of a finite tree can be estimated in polynomial time using dynamic programming, while estimating the combinatorial diameter can be done in linear time with an algorithm using a breadth-first search approach.
\end{rem}
Strong colorings were defined for finite trees in \cite{LeeNaorPeres2009}. The definition is readily extendable to arbitrary trees once the monotonicity of a path is defined for root-leafend paths and not only for root-leaf paths. A coloring $\chi\colon E\to\cC$ of a weighted tree $T=(V,E,w)$ is $\delta$-strong in the sense of \cite{LeeNaorPeres2009} if it is monotone, and for every $x,y\in V$, at least half of the shortest path connecting $x$ and $y$ is covered by color classes of length at least $\delta\rho_T(x,y)$, i.e.
$$\sum_{c\in \cC}\ell_c^\chi(x,y)\cdot \textbf{1}_{\{c\colon \ell_c^\chi(x,y)\ge \delta\rho_T(x,y)\}}\ge \frac{1}{2}\rho_T(x,y),$$
where $$\ell_c^\chi(x,y):=\sum_{\underset{e\in P(x,y)}{\chi(e)=c}}w(e).$$
Let $\delta^*(T):=\sup\{\delta \colon $T$ \textrm{ admits a $\delta$-strong coloring}\}$.
As already mentioned in \cite{LeeNaorPeres2009} for finite trees, a caterpillar coloring is a strong coloring.
\begin{lemma}\label{strongcaterpi} Let $T$ be a weighted tree and $\kappa\in\bn$. A $\kappa$-caterpillar coloring of $T$ is also a $\frac{1}{4\kappa}$-strong coloring. Therefore, $\delta^*(T)\ge\frac{1}{4\kappa_*(T)}.$
\end{lemma}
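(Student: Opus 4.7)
The proof should be a short combinatorial bookkeeping argument, so I would organize it as follows.

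First, I would set up the geometry of an arbitrary pair $x,y\in V$ in a tree $T$ equipped with a $\kappa$-caterpillar coloring $\chi\colon E\to\cC$. Rooting $T$ at some vertex $r$ (the same one used to define the coloring) and writing $z:=lca(x,y)$, the unique path $P(x,y)$ decomposes as $P(x,z)\cup P(z,y)$. Each of these two pieces is a connected subset of a root-leafend path (namely the extension of $P(r,x)$ and $P(r,y)$ respectively), so each is a monotone path.

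The key observation is then a counting bound on the number of distinct colors visible on $P(x,y)$. Since $\chi$ is $\kappa$-caterpillar, every root-leafend path uses at most $\kappa$ colors; hence so does each monotone subpath. Applied to $P(x,z)$ and to $P(z,y)$, this gives at most $2\kappa$ distinct colors appearing on $P(x,y)$ in total. Call a color $c$ \emph{small} if $\ell_c^\chi(x,y)<\tfrac{1}{4\kappa}\rho_T(x,y)$ and \emph{large} otherwise.

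The bound on the number of colors immediately controls the cumulative length contributed by small colors:
\begin{equation*}
\sum_{c\text{ small}}\ell_c^\chi(x,y)\;<\;2\kappa\cdot\frac{1}{4\kappa}\rho_T(x,y)\;=\;\tfrac12\rho_T(x,y).
\end{equation*}
Since $\sum_{c\in\cC}\ell_c^\chi(x,y)=\rho_T(x,y)$, the large colors must then contribute at least $\tfrac12\rho_T(x,y)$, which is exactly the defining inequality of a $\tfrac{1}{4\kappa}$-strong coloring. As $x,y$ were arbitrary, $\chi$ is $\tfrac{1}{4\kappa}$-strong; taking infima over all $\kappa$-caterpillar colorings yields $\delta^*(T)\ge \tfrac{1}{4\kappa_*(T)}$.

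There is no real obstacle here: the only point that needs a small amount of care is verifying that both halves $P(x,z)$ and $P(z,y)$ genuinely qualify as monotone paths in the sense of the earlier definition (they are, since each lies on the root-leafend path through $x$ or $y$), so that the $\kappa$-bound on colors along a root-leafend path actually applies to each of them. Once that is in place, the rest is the one-line pigeonhole computation above.
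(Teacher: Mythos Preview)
Your proof is correct and is essentially the same argument as the paper's: both hinge on the observation that $P(x,y)$ meets at most $2\kappa$ color classes, so the short color classes can cover strictly less than half of $\rho_T(x,y)$. The only cosmetic difference is that the paper phrases this as a proof by contradiction and leaves the ``at most $2\kappa$ colors'' step implicit, whereas you argue directly and spell out the decomposition through $z=lca(x,y)$.
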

\begin{proof}
Let $\chi\colon E\to \cC$ be a $\kappa$-caterpillar coloring of $T=(V,E)$ and assume that $\chi$ is not a $\frac{1}{4\kappa}$-strong coloring. Then there exist $x,y\in V$ such that $$\sum_{c\in \cC}\ell_c^\chi(x,y)\cdot \textbf{1}_{\{c\colon \ell_c^\chi(x,y)\ge \frac{\rho_T(x,y)}{4\kappa}\}}<\frac{1}{2}\rho_T(x,y).$$
But,
\begin{align*}
\rho_T(x,y)&=\sum_{c\in \cC}\ell_c^\chi(x,y)\cdot \textbf{1}_{\{c\colon \ell_c^\chi(x,y)\ge \frac{\rho_T(x,y)}{4\kappa}\}}+\sum_{c\in \cC}\ell_c^\chi(x,y)\cdot \textbf{1}_{\{c\colon \ell_c^\chi(x,y)< \frac{\rho_T(x,y)}{4\kappa}\}}\\
& <\frac{1}{2}\rho_T(x,y)+2\kappa\frac{\rho_T(x,y)}{4\kappa}=\rho_T(x,y),
\end{align*}
a contradiction.
\end{proof}

However a strong coloring is not necessarily a caterpillar coloring. Indeed, consider a ray with countably many edges, and assign the sequence of weights $(\frac{1}{2}, \frac{1}{4}, \dots, \frac{1}{2^k},\dots)$ to the edges starting from the root. If every edge is colored with a different color, the monotone coloring obtained is not a $\kappa$-caterpillar coloring for any finite $\kappa$ but is a $\frac{1}{2}$-strong coloring. Proposition \ref{counterex} shows that the inequality in Lemma \ref{strongcaterpi} cannot be reversed in full generality.
\begin{proposition}\label{counterex}
There exists a weighted tree $T$ with $\delta^*(T)\ge\frac{1}{4}$ and $\kappa_*(T)=\infty$.
\end{proposition}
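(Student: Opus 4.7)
The plan is to exhibit an explicit weighted tree: take $T$ to have the combinatorial structure of $T^\omega_\omega$ rooted at $r$, with edge weights $w(u,v) := 2^{-h(v)}$ where $v$ is the child endpoint. Since $\kappa_*$ depends only on the underlying combinatorial tree, the weights will only intervene in the strong-coloring estimate.

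For $\kappa_*(T) = \infty$, the key observation is that in any monotone coloring $\chi$, the edges from a vertex $v$ to its children must carry pairwise distinct colors: otherwise a single color class would contain two edges $(v,c_1),(v,c_2)$ with $c_1 \neq c_2$, and this class would have to traverse $v$ both upward and downward, contradicting its being a connected subpath of a root-leafend path. Setting $v_0 := r$, I would inductively construct a root-leafend ray: at step $i$, the vertex $v_i$ has countably many children bearing pairwise distinct edge-colors, so I can choose $v_{i+1}$ so that $\chi((v_i,v_{i+1}))$ avoids the at most $i$ colors already occurring on $(v_0,\ldots,v_i)$. The resulting ray realizes infinitely many distinct colors, so no finite $\kappa$-caterpillar coloring exists.

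For $\delta^*(T) \ge 1/4$, I would exhibit the monotone coloring $\chi$ assigning a distinct color to each edge; every color class is then a single edge, trivially monotone. For $x,y \in V$, set $v := lca(x,y)$ and $h_l := h(v)$; the path $P(x,y)$ consists of edges at depths $h_l+1,\ldots,h(x)$ running from $v$ to $x$ and at depths $h_l+1,\ldots,h(y)$ running from $v$ to $y$, with weight $2^{-k}$ at depth $k$, so
\[
\rho_T(x,y) = (2^{-h_l} - 2^{-h(x)}) + (2^{-h_l} - 2^{-h(y)}) \le 2 \cdot 2^{-h_l},
\]
with the summand indexed by $x$ (resp.\ $y$) absent when $v = x$ (resp.\ $v = y$). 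In the generic case $v \notin \{x,y\}$, the two edges of $P(x,y)$ incident to $v$ each have weight $2^{-h_l-1} \ge \rho_T(x,y)/4$, and their combined weight $2^{-h_l}$ is at least $\rho_T(x,y)/2$. In the degenerate case $v = x$, the single edge of $P(x,y)$ incident to $v$ has weight $2^{-h(x)-1} \ge \rho_T(x,y)/2$ and hence also $\ge \rho_T(x,y)/4$. In every case the one or two selected color classes witness the $\tfrac{1}{4}$-strong inequality.

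The only real content is the combinatorial observation that countable branching forces distinct colors on sibling edges, which makes $\kappa_* = \infty$ immediate; the strong-coloring verification reduces to short geometric-series comparisons, with the only mild subtlety being the handling of the ancestor/descendant degeneracy.
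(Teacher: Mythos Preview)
Your proof is correct and follows essentially the same approach as the paper: an infinite-height tree with edge weights $2^{-h}$, the trivial one-color-per-edge monotone coloring, and the same geometric-series verification that the two (or one) edges incident to $lca(x,y)$ already account for half of $\rho_T(x,y)$ while each exceeds $\rho_T(x,y)/4$. The only difference is that the paper uses the combinatorial binary tree $B_\infty$ and simply asserts $\kappa_*(B_\infty)=\infty$, whereas you use $T^\omega_\omega$ and exploit the countable branching to give a clean pigeonhole argument for $\kappa_*=\infty$; your choice makes that step more transparent, at no cost elsewhere.
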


\begin{proof}Consider the combinatorial binary tree with infinite height $B_\infty$. It is clear that $\kappa_*(B_\infty)=\infty$. Put the weight $\frac{1}{2^n}$ on every edge $e=(x,y)$ whose vertices $x$ and $y$ are at distance respectively $n-1$ and $n$ to the root. The edge coloring where every edge has a different color is $\frac{1}{4}$-strong. Let $x,y\in B_\infty$. Let $lca(x,y)$ denotes the last common ancestor of $x$ and $y$, and $\ell(x)$ (resp. $\ell(y)$) the length of the edge attached to $lca(x,y)$ and pointing toward $x$ (resp. $y$). Note that every ray starting at the root is assigned the sequence of weights $(\frac{1}{2}, \frac{1}{4}, \dots, \frac{1}{2^n},\dots)$, and hence $\ell(x)=\ell(y)=\frac{1}{2^k}$ for some $k\in\bn$. Since for every $n\in\bn$, $\sum_{i=n}^\infty2^{-i}=2^{-k+1}$, one has $\rho_{B_\infty}(lca(x,y),x)<2^{-k+1}$, $\rho_{B_\infty}(lca(x,y),y)<2^{-k+1}$, and hence $\rho_{B_\infty}(x,y)<2^{-k+2}$. It follows that
\begin{align*}
\ell(x)+\ell(y)=2^{-k}+2^{-k}&>\frac{\rho_{B_\infty}(lca(x,y),x)}{2}+\frac{\rho_{B_\infty}(lca(x,y),y)}{2}=\frac{\rho_{B_\infty}(x,y)}{2},
\end{align*}
but $\min\{\ell(x);\ell(y)\}>\ds\frac{\rho_{B_\infty}(x,y)}{4}$.
\end{proof}

\section{Applications}\label{app}
\subsection{Stability of the asymptotic structure under nonlinear quotients}\label{stability}
Co-Lipschitz maps were introduced by Gromov in \cite{Gromov1998} in the context of geometric group theory. Later, Lipschitz quotients and uniform quotients were introduced and studied in the framework of Banach spaces by Bates, Johnson, Lindenstrauss, Preiss, and Schechtman \cite{BJLPS1999}. A map $f\colon X\to Y$ between metric spaces $X$ and $Y$ is called a uniform quotient map, and $Y$ is simply said to be a uniform quotient of $X$, if there exist non-decreasing functions $\rho,\omega\colon\br_+\to\br_+$ satisfying $\lim_{t\to 0}\omega(t)=0$ and $\rho(t)>0$ for all $t>0$ so that for all $x\in X$ and $r\in(0,\infty)$ one has
\begin{equation}\label{uniformquotient}
B_Y(f(x),\rho(r))\subset f(B_X(x,r))\subset B_Y(f(x),\omega(r)).
\end{equation}
If only the left inclusion in \eqref{uniformquotient} is satisfied then $f$ is said to be co-uniformly continuous. If the non-decreasing functions satisfy $\omega(r)\le Lr$ and $\rho(r)\ge r/C$ for some $L,C>0$, then $f$ is called a Lipschitz quotient map, and $Y$ is said to be a Lipschitz quotient of $X$. Note that the right inclusion in \eqref{uniformquotient} with $\omega(r)\le Lr$ is equivalent to $f$ being Lipschitz with $\lip(f)\le L$. If the left inclusion in \eqref{uniformquotient} is satisfied with $\rho(r)\ge r/C$, $f$ is said to be co-Lipschitz, and the infimum of all such $C$'s, denoted by $\colip(f)$, is called the co-Lipschitz constant of $f$. We define the \textit{codistortion} of a Lipschitz quotient map $f$ as $\codist(f):=\lip(f)\cdot\colip(f)$.

\smallskip

In the sequel mainly nonlinear quotient maps defined on some subset of a metric space are considered. We say that $Y$ is a Lipschitz (resp. uniform) subquotient of $X$ if $Y$ is a Lipschitz (resp. uniform) quotient of a subset of $X$. In particular a quantitative analysis of Lipschitz subquotients, similar to the quantitative theory of bi-Lipschitz embeddings, is emphasized.

\begin{defn}
Let $X,Y$ be two metric spaces. $Y$ is a said to be a Lipschitz subquotient of $X$ with codistortion $\alpha\in[1,\infty)$ (or simply $Y$ is an $\alpha$-Lipschitz subquotient of $X$) if there is a subset $Z\subset X$ and a Lipschitz quotient map $f\colon Z\to Y$ such that $\codist(f)\le\alpha$. We define the $X$-quotient codistortion of $Y$ as
$$qc_X(Y):=\inf\{\alpha\colon Y\textrm{ is an $\alpha$-Lipschitz subquotient of } X\}.$$
We set $qc_X(Y)=\infty$ if $Y$ is not a Lipschitz quotient of any subset of $X$.
\end{defn}

\begin{rem}
Lipschitz subquotients have already been implicitly touched upon (e.g. in \cite{MendelNaor2013}, \cite{LimaR2012}, \cite{DKR2014}). A ``dual'' notion was considered by Mendel and Naor in \cite{MendelNaor2004}, where given $\alpha\in[1,\infty)$ they say that $X$ has an $\alpha$-Lipschitz quotient in $Y$ if there is a subset $S\subset Y$ and a Lipschitz quotient map $f\colon X\to S$ such that $\codist(f)\le\alpha$.
\end{rem}

Observe that if $f$ is a bi-Lipschitz embedding from $X$ into $Y$, then $f^{-1}$ is a Lipschitz quotient map from $f(X)$ onto $X$, with $\codist(f^{-1})=\dist(f)$. Therefore we have $qc_Y(X)\le c_Y(X)$. A crucial and known observation for the ensuing discussion is that the previous inequality is actually an equality for trees.

\begin{proposition}\label{treesubquolip}
Let $Y$ be a metric space and $T$ a weighted tree. Then $qc_Y(T)=c_Y(T)$.
\end{proposition}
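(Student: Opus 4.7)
The inequality $qc_Y(T) \le c_Y(T)$ is immediate in full generality: given a bi-Lipschitz embedding $f \colon T \to Y$, the inverse $f^{-1} \colon f(T) \to T$ is a Lipschitz quotient map of the subset $f(T) \subset Y$ onto $T$ with codistortion equal to $\dist(f)$. The content of the proposition is the reverse inequality $c_Y(T) \le qc_Y(T)$, which is obtained by exploiting the tree structure to construct a bi-Lipschitz section of an arbitrary Lipschitz quotient map.

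The plan is as follows. Let $f \colon Z \to T$ be a Lipschitz quotient map with $Z \subset Y$, and fix arbitrary constants $C > \colip(f)$ and $\alpha > 0$. Root $T$ at some vertex $r$ and define a section $g \colon T \to Z$ of $f$ (so that $f \circ g = \mathrm{id}_T$) by induction on the height of vertices. Put $g(r) \in f^{-1}(r)$ arbitrarily. For a non-root vertex $v$ with parent $u$ joined by an edge of weight $w$, apply the co-Lipschitz inclusion at the already-defined point $g(u)$, namely $B_T(u,(C+\alpha)w/C) \subset f(B_Z(g(u),(C+\alpha)w))$. Since $\rho_T(u,v) = w < (C+\alpha)w/C$, there exists $z \in B_Z(g(u), (C+\alpha)w)$ with $f(z) = v$, and one sets $g(v) := z$. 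Every vertex has finite height, so this inductive procedure is well defined.

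The bi-Lipschitz estimates follow by telescoping. Since $f \circ g = \mathrm{id}_T$ and $f$ is $\lip(f)$-Lipschitz, one has $\rho_T(x,y) \le \lip(f)\, d_Y(g(x),g(y))$ for all $x,y \in T$, which yields $\lip(g^{-1}) \le \lip(f)$ and in particular implies that $g$ is injective. For the upper Lipschitz estimate, the crucial feature of a tree is that any two vertices are joined by a unique path $P(x,y) = (x = v_0, v_1, \dots, v_k = y)$ whose edge-weights sum to $\rho_T(x,y)$; summing the edgewise bound $d_Y(g(v_{i-1}), g(v_i)) \le (C+\alpha) w(v_{i-1} v_i)$, which holds regardless of which endpoint of the edge is the parent, produces $d_Y(g(x), g(y)) \le (C+\alpha)\rho_T(x,y)$. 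Hence $\dist(g) \le (C+\alpha)\lip(f)$. Letting $C \downarrow \colip(f)$ and $\alpha \downarrow 0$, then taking the infimum over all Lipschitz subquotient maps $f$, delivers $c_Y(T) \le qc_Y(T)$.

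There is no substantial obstacle in the argument. The only mildly delicate point is the additive slack $\alpha w$ needed to secure a strict inclusion and hence guarantee the existence of the required preimage; because this slack is proportional to the edge weight, it contributes only a multiplicative factor $(1+\alpha/C)$ to the Lipschitz constant of $g$ and is absorbed at the final limit $\alpha \downarrow 0$. The whole argument rests squarely on two tree-specific features: every vertex has finite height, so the induction is well founded, and distances along the unique path between two vertices are additive, so the telescoping is tight.
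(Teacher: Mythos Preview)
Your proof is correct and follows essentially the same approach as the paper's: lift the tree through the co-Lipschitz map by induction on height to produce a section, then observe that the section is bi-Lipschitz because the Lipschitz bound can be checked on edges and telescoped along the unique path. The only difference is cosmetic: you take $C>\colip(f)$ (and an extra slack $\alpha$, which is in fact unnecessary once $C>\colip(f)$ and balls are closed) and pass to the limit at the end, whereas the paper works directly with $\colip(f)$; your version is arguably the more careful of the two.
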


\begin{proof}
Let $Z$ be a subset of $Y$ and $f: Z\to T$ a Lipschitz quotient map. Equip $T$ with its canonical graph metric $\rho_T$ and root $T$ at an arbitrary vertex $r$ so that the height of the tree is well defined. By induction on the height of the tree it is possible to select a collection of points $(z_v)_{v\in T}\subset Z$ such that $f(z_v)=v$, and for every pair of adjacent vertices $(v,w)$ one has $d_Y(z_v,z_w)\le \colip(f)\rho_T(v,w)$. Since for a weighted graph it is sufficient to consider pairs of adjacent vertices to estimate the Lipschitz constant of a map, the injective map $g\colon v\mapsto z_v$ is Lipschitz with $\lip(g)\le\colip(f)$. We conclude by simply observing that $\lip(g^{-1})\le \lip(f)$, and hence $\dist(g)\le \codist(f)$.
\end{proof}

Recently, the stability under nonlinear quotients of the asymptotic structure of infinite-dimensional Banach spaces has been investigated (\cite{LimaR2012}, \cite{DKLR2014}, \cite{Zhang2015}, \cite{DKR2014}). The common feature of these articles is the implementation of a delicate and technical argument (or some slight variations of it) called ``fork argument'', which describes the behavior of a nonlinear lifting of points that are approximately in a fork configuration. This behavior depends heavily on the asymptotic geometry of the spaces and can rule out the existence of certain nonlinear quotient maps. As explained in \cite{LimaR2012}, the general idea is to built a collection of points approximately in a fork configuration in the target space whose set of pre-images contains a fork of comparable size, and then use the quantification of property ($\beta)$ to get a contradiction. Our work unifies, and extends, a series of results from \cite{LimaR2012}, \cite{DKLR2014}, \cite{Zhang2015}, and \cite{DKR2014}, which we now describe.

\medskip

The main motivation of Lima and Randrianarivony was to solve a long-standing open problem raised in \cite{BJLPS1999}. They proved that a Banach space that is a uniform quotient of $\ell_p$ for $1<p<2$ must be isomorphic to a linear quotient of $\ell_p$. The key ingredient was to show that $\ell_q$ cannot be a uniform quotient of $\ell_p$ if $1<p<q<\infty$. The authors already noticed that their proof will work equally well for Lipschitz subquotients. The following refinement of the Lima-Randrianarivony result appears in \cite{DKLR2014}.

\begin{theorem}[\cite{DKLR2014}]\label{refinement}
Let $X$ be a linear quotient of a subspace of an $\ell_p$-sum of finite-dimensional spaces, where $p\in(1,\infty)$. Assume that a Banach space $Y$ is a uniform subquotient of $X$, where the uniform quotient map is Lipschitz for large distances. Then $Y$ does not contain a subspace isomorphic to $\ell_q$ for any $q>p$.
\end{theorem}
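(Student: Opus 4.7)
The plan is to combine the distortion lower bound of Theorem~\ref{treedist} with the observation (Proposition~\ref{treesubquolip}) that bi-Lipschitz and Lipschitz-subquotient distortions agree for trees, and to derive a contradiction from the presence of $\ell_q$ in $Y$. The starting point is that $X$, being a linear quotient of a subspace of an $\ell_p$-sum of finite-dimensional spaces, admits an equivalent norm with property~$(\beta_p)$; this is a standard renorming fact already used in \cite{DKLR2014}. Consequently $X\in\cC_{(\beta_p)}$, and Theorem~\ref{treedist} provides a constant $K>0$ with $c_X(T^\omega_h)\ge K\log(h)^{1/p}$ for all sufficiently large $h$.

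I would then suppose for contradiction that $\ell_q$ embeds isomorphically into $Y$ for some $q>p$. Since $\kappa_*(T^\omega_h)\le h$, Theorem~\ref{cdim} applied with exponent $q$ supplies embeddings $\phi_h\colon T^\omega_h\to\ell_q$ of distortion at most $C_q\log(h)^{1/q}$; composing with the isomorphic inclusion $\ell_q\hookrightarrow Y$ yields embeddings $\psi_h\colon T^\omega_h\to Y$ with $\lip(\psi_h)\le A_h$, $\lip(\psi_h^{-1})\le B_h$, and $A_hB_h\lesssim\log(h)^{1/q}$. Let $f\colon Z\to Y$, $Z\subseteq X$, be the uniform subquotient map, whose modulus of continuity $\omega$ satisfies $\omega(r)\le Lr$ for $r\ge R_0$ and whose modulus of co-continuity $\rho$ satisfies $\rho(r)\ge r/c$ in the large-distance regime. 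I would dilate $\psi_h$ by a factor $N=N(h)$ chosen large enough that every pair of distinct image points in $Y$ sits in the co-Lipschitz regime and every pair of lifted vertices produced below sits in the Lipschitz regime of $f$.

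Following the recipe of Proposition~\ref{treesubquolip}, I would lift $N\psi_h$ through $f$: root $T^\omega_h$, fix any pre-image $\tilde{g}(\mathrm{root})$ of $N\psi_h(\mathrm{root})$, and inductively for each new edge $(v,w)$ invoke the large-scale co-Lipschitz inclusion to choose $\tilde{g}(w)\in Z$ with $f(\tilde{g}(w))=N\psi_h(w)$ and $d_X(\tilde{g}(v),\tilde{g}(w))\le c\,\|N\psi_h(v)-N\psi_h(w)\|$. Because the Lipschitz constant on a weighted tree is determined by adjacent pairs, $\lip(\tilde{g})\le cNA_h$. For the reverse inequality, a uniform-continuity argument together with the choice of $N$ forces $d_X(\tilde{g}(v),\tilde{g}(w))\ge R_0$ for every $v\ne w$, so the Lipschitz-at-large-distances hypothesis gives
\[
d_X(\tilde{g}(v),\tilde{g}(w))\ge \tfrac{1}{L}\|N\psi_h(v)-N\psi_h(w)\|\ge \tfrac{N}{LB_h}\,\rho_T(v,w).
\]
The dilation factor cancels and $\dist(\tilde{g})\le cL\,A_hB_h\lesssim\log(h)^{1/q}$.

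Combining everything, $K\log(h)^{1/p}\le c_X(T^\omega_h)\lesssim\log(h)^{1/q}$ for all large $h$, which is impossible since $q>p$. The hardest step is the one just before: guaranteeing the large-distance Lipschitz hypothesis for $f$ applies to \emph{all} pairs of lifted vertices and not only to adjacent ones. This is accomplished by the scale choice of $N$ and the monotonicity of the modulus of uniform continuity, which propagate the large-distance regime of $f$ from adjacencies in the tree to the entire lifted image. This clean tree-based bookkeeping is precisely what Proposition~\ref{treesubquolip} offers in place of the more delicate ``fork argument'' of \cite{LimaR2012,DKLR2014,DKR2014}.
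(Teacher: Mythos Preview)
Your argument is correct and follows essentially the same route as the paper: use that $X\in\cC_{(\beta_p)}$ to get $c_X(T^\omega_h)=\Omega(\log h^{1/p})$ from Theorem~\ref{treedist}, use the presence of $\ell_q$ in $Y$ together with the tree embedding (Corollary~\ref{cdiam} rather than Theorem~\ref{cdim}, since $T^\omega_h$ is infinite) to get $c_Y(T^\omega_h)=O(\log h^{1/q})$, and then pass the embedding from $Y$ to $X$ through the subquotient map to reach a contradiction. The only organizational difference is that the paper packages the transfer step as Proposition~\ref{discrepancy} (composing quotient maps $g_n\circ f_n$ and then invoking Proposition~\ref{treesubquolip}), whereas you inline the lifting of the embedding directly; the two are equivalent for trees precisely because of Proposition~\ref{treesubquolip}. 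Two small points: you should cite Corollary~\ref{cdiam} or Theorem~\ref{catdistortion} for the $\ell_q$-embedding of $T^\omega_h$, and the fact that the co-uniform map is co-Lipschitz for large distances uses metric convexity of $Y$ (as the paper notes before Proposition~\ref{discrepancy}), which you assume without comment.
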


Another implementation of the ``fork argument'' by Lima and Randrianarivony gives the following theorem.

\begin{theorem}[\cite{LimaR2012}]\label{couniform}
$\co$ is not a uniform quotient (or a Lipschitz subquotient) of a Banach space with property ($\beta)$.
\end{theorem}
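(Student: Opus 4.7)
The plan is to reduce to Theorem \ref{treedist} via a tree-lifting argument in the spirit of Proposition \ref{treesubquolip}. The key input is that $\co$ contains bi-Lipschitz copies of the countably branching trees with distortion uniformly bounded in height, and therefore any Lipschitz subquotient of $\co$ of a given codistortion must also contain such copies with uniformly bounded distortion.

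Specifically, the countable tree $T^\omega_\omega$ is a separable metric space, so by Aharoni's theorem it bi-Lipschitzly embeds into $\co$ with some universal distortion $C$ (any finite bound is enough). Restricting to vertices of height at most $h$ yields, for every $h\in\bn$, a bi-Lipschitz embedding $g_h\colon T^\omega_h\to\co$ with $\dist(g_h)\le C$. Suppose for contradiction that $\co$ is a Lipschitz subquotient of $Y$ with codistortion $\alpha<\infty$, witnessed by a Lipschitz quotient map $f\colon Z\to\co$ on some subset $Z\subset Y$ (the co-Lipschitz property forces $f(Z)=\co$). Rooting $T^\omega_h$ arbitrarily and mimicking the inductive lifting procedure in the proof of Proposition \ref{treesubquolip}, for each vertex $v$ with already chosen parent lift $z_{v'}$ the co-Lipschitz property provides $z_v\in Z$ with $f(z_v)=g_h(v)$ and $\|z_v-z_{v'}\|_Y\le \colip(f)\,\|g_h(v)-g_h(v')\|_{\co}$. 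Since the Lipschitz constant on a graph is controlled by adjacent pairs, and the lower bound $\|z_v-z_w\|_Y\ge \lip(f)^{-1}\|g_h(v)-g_h(w)\|_{\co}$ controls the inverse, the resulting assignment $v\mapsto z_v$ is a bi-Lipschitz embedding of $T^\omega_h$ into $Y$ with distortion at most $\codist(f)\cdot\dist(g_h)\le C\alpha$, uniformly in $h$. This contradicts Theorem \ref{treedist}, which forces $c_Y(T^\omega_h)\to\infty$ as $h\to\infty$.

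For the case of a genuinely nonlinear uniform quotient, where the moduli $\rho$ and $\omega$ need not be linear, I would invoke the standard fact from \cite{BJLPS1999} that uniform quotient maps between Banach spaces are large-scale Lipschitz and large-scale co-Lipschitz. Rescaling the Aharoni embedding by a sufficiently large factor $R>0$ forces the relevant lifts to live at a scale where $f$ behaves Lipschitzly with uniform constants, so that Steps~2--3 above apply verbatim to the rescaled tree $(T^\omega_h,R\rho_{T^\omega_h})$. The main obstacle is precisely this uniform-quotient case: one must verify that the rescaling factor $R$ can be chosen independently of $h$ so that the final distortion bound remains uniformly bounded in $h$ and still contradicts Theorem \ref{treedist}. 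The Lipschitz subquotient case itself is an essentially immediate corollary of Theorem \ref{treedist}, the lifting philosophy underlying Proposition \ref{treesubquolip}, and Aharoni's theorem.
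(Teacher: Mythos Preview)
Your proposal is correct and follows essentially the same route as the paper's own argument (given as the proof of the stronger Theorem~\ref{costrong}, which subsumes Theorem~\ref{couniform}): embed the trees $T^\omega_h$ into $\co$ with distortion uniformly bounded in $h$, lift them back through the quotient map to get uniformly bounded $c_Y(T^\omega_h)$, and contradict Theorem~\ref{treedist}. The paper packages the lifting step abstractly as the identity $qc_Y(T)=c_Y(T)$ of Proposition~\ref{treesubquolip} together with Proposition~\ref{discrepancy}, and for the $\co$ embedding it uses the summing basis (or \cite{KaltonLancien2008}) rather than Aharoni's theorem, but the content is identical. Your residual worry about choosing the rescaling factor $R$ independently of $h$ in the uniform-quotient case is not a genuine obstacle: $R$ depends only on the moduli of $f$ and the fixed Aharoni constant $C$, not on $h$; the paper achieves the same uniformity via the integer-valued graph metric trick in Claim~\ref{liphn} (if the preimages are closer than $\delta$, their images in the tree coincide).
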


Later, the second author of this article introduced in \cite{Zhang2015} the notion of coarse quotient map and proved Theorem \ref{lpcoarse} below, as well as a coarse analogue of Theorem \ref{couniform}, using a coarse version of the ``fork argument''.

\begin{theorem}[\cite{Zhang2015}]\label{lpcoarse}
Let $X$ be a Banach space with property ($\beta_p$) for some $p\in(1,\infty)$. Assume that a Banach space $Y$ is a coarse quotient of a subset of $X$, where the coarse quotient map is Lipschitz for large distances. Then $Y$ does not contain a subspace isomorphic to $\ell_q$ for any $q>p$.
\end{theorem}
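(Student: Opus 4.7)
The plan is to combine the Bourgain--Matou\v{s}ek-type upper bound for tree embeddings into $\ell_q$ (Theorem \ref{catdistortion}) with a coarse quotient lifting to produce an embedding of the complete countably branching hyperbolic tree $T^\omega_h$ into $X$ whose distortion grows like $(\log h)^{1/q}$, then contradict the $(\log h)^{1/p}$ lower bound supplied by Theorem \ref{treedist}, which is strictly stronger since $p<q$. So suppose toward a contradiction that $Y$ contains an isomorphic copy of $\ell_q$. Because $\kappa_*(T^\omega_h)=h$, Theorem \ref{catdistortion} yields a bi-Lipschitz embedding of $T^\omega_h$ into $\ell_q(I)$ for some countable $I$ (hence into $\ell_q$) with distortion $O_q((\log h)^{1/q})$, and composing with the given isomorphic inclusion $\ell_q\hookrightarrow Y$ produces $\psi\colon T^\omega_h\to Y$ of distortion $D_h\lesssim(\log h)^{1/q}$, with hidden constant independent of $h$.

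Let $f\colon Z\to Y$ denote the coarse quotient map from $Z\subset X$, let $L,\tau$ be the parameters of its Lipschitz-for-large-distances behavior, and let $\omega$ and $\varepsilon_0$ be the quantitative data of the coarse quotient property, so that targets within $Y$ can be preimaged up to the error $\varepsilon_0$ with the preimage radius controlled by $\omega$. The next step is to rescale $\psi$ by a large factor $R$, chosen so that adjacent tree vertices are sent to points of $Y$ separated by more than $\tau+4\varepsilon_0$. Then, inductively along the tree (root-first, level by level), one selects a lift $z_v\in Z$ for each vertex $v\in T^\omega_h$: given $z_u$ for the parent $u$ of $v$, the coarse quotient property applied at $z_u$ with target $R\psi(v)$ produces $z_v$ satisfying $\|f(z_v)-R\psi(v)\|_Y\leq\varepsilon_0$ and $\|z_v-z_u\|_X\leq\omega(R+\varepsilon_0)$. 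This is the coarse counterpart of the elementary inductive construction behind Proposition \ref{treesubquolip}. One then shows that the resulting map $g\colon v\mapsto z_v$ is bi-Lipschitz with distortion $O(D_h)=O((\log h)^{1/q})$: the upper Lipschitz bound on adjacent pairs is $\omega(R+\varepsilon_0)$ (which suffices because graph distance on a tree is determined by distances across edges), and the lower bound follows from the Lipschitz-for-large-distances inequality $\|f(z_v)-f(z_w)\|_Y\leq L\|z_v-z_w\|_X$, applied after absorbing the $\varepsilon_0$-slack thanks to the large choice of $R$.

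Finally, feeding $g$ into Theorem \ref{treedist} gives $D_h\geq c_X(T^\omega_h)\geq 2\gamma^{1/p}\log(h/2)^{1/p}$, where $\gamma=\gamma(X)>0$ is the $(\beta_p)$-constant of $X$, so that $(\log h)^{1/p}\lesssim(\log h)^{1/q}$, impossible for large $h$ since $p<q$. The main obstacle is the rigorous execution of the lifting within the quantitative coarse quotient framework: one must verify that the error $\varepsilon_0$ is genuinely additive (not multiplicative) per edge so that it does not swamp the distortion estimate. This is ensured because only a single application of the coarse quotient property is needed per edge, and the Lipschitz-for-large-distances hypothesis is exactly what converts the resulting coarse control into an honest distortion bound independent of $h$. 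A secondary subtlety is that $\omega$ need not be linear, so one must verify that the ratio $\omega(R+\varepsilon_0)\cdot LD_h/R$ (which governs $\dist(g)$) stays bounded by a constant multiple of $D_h$ as $R$ is taken large, which holds provided the Lipschitz-for-large-distances hypothesis is used to cap $\omega$'s contribution.
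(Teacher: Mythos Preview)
Your overall strategy matches the paper's alternative route to this theorem: exhibit a discrepancy in the growth of $c_{\,\cdot\,}(T^\omega_h)$ by combining the upper bound from Theorem \ref{catdistortion} (or Corollary \ref{cdiam}) with the lower bound from Theorem \ref{treedist}, and transfer the tree embedding from $Y$ to $X$ through the coarse quotient map. The paper packages this transfer as Proposition \ref{discrepancycoarse} (together with Proposition \ref{treesubquolip} and Lemma \ref{csubquo}), whereas you lift a single embedding directly; the underlying idea is the same.

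There is, however, a genuine gap in your final paragraph. You correctly recognize that the preimage-radius function $\omega$ (the $\delta$ of co-coarse continuity) must grow at most linearly for large arguments in order that $\omega(R+\varepsilon_0)\cdot LD_h/R$ stay $O(D_h)$. But you attribute this control to the ``Lipschitz-for-large-distances hypothesis''. That hypothesis governs the \emph{forward} direction of $f$---it bounds $\|f(x)-f(y)\|$ above by $L\|x-y\|$ for $\|x-y\|\ge\tau$---and says nothing whatsoever about how large a ball in $X$ is needed to cover a given ball in $Y$. The linear growth of $\omega$ for large radii (equivalently, that $f$ is co-Lipschitz for large distances) is obtained instead from the \emph{metric convexity} of the Banach space $Y$: one chains together applications of the co-coarse inclusion along a geodesic subdivision in $Y$, exactly as in the claim at the start of the proof of Lemma \ref{csubquo}. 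Without this step, $\omega(R)/R$ could be unbounded and your distortion estimate for $g$ collapses. Once you invoke metric convexity (or simply cite Lemma \ref{csubquo}) at that point, your argument goes through and coincides with the paper's.
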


In \cite{DKR2014}, Dilworth, Kutzarova, and Randrianarivony proved a nice rigidity result.

\begin{theorem}[\cite{DKR2014}]\label{rigidity}
If $Y$ is a separable Banach space that is a uniform quotient of a Banach space $X$ that has an equivalent norm with property ($\beta$), then $Y$ must be reflexive and admits an equivalent norm with property ($\beta$).
\end{theorem}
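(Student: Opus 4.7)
The plan is to decouple the two conclusions by observing that property $(\beta)$ implies reflexivity, as noted in the introduction, so it suffices to construct an equivalent norm on $Y$ with property $(\beta)$. I would first use the equivalence $\bigcup_{p} \cC_{(\beta_p)} = \cC_{(\beta)}$ cited as \cite{DKLRpriv} to pick $p \in (1,\infty)$ with $X \in \cC_{(\beta_p)}$, so that Theorem \ref{treedist} is available in its quantitative form.

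The main argument will be by contraposition, combining the metric characterization of $\cC_{(\beta)}$ via the trees $(T^\omega_h)_{h \ge 1}$ announced in the applications paragraph of the introduction (the nontrivial direction being: if $Y$ does not belong to $\cC_{(\beta)}$, then $(T^\omega_h)_{h \ge 1}$ equi-bi-Lipschitzly embeds into $Y$) with the tree-lifting Proposition \ref{treesubquolip} and the distortion lower bound Theorem \ref{treedist}. Assuming $Y \notin \cC_{(\beta)}$, I would extract a constant $D < \infty$ and bi-Lipschitz embeddings $f_h \colon T^\omega_h \to Y$ with $\dist(f_h) \le D$ for every $h$. After rescaling the tree metric by a sufficiently large factor $\Lambda$, chosen so that all adjacent-vertex distances in the dilated images lie within the range of scales on which the modulus of uniform continuity $\omega_q$ of the quotient map $q \colon X \to Y$ is linear, I would lift each $f_h$ by the inductive procedure used in the proof of Proposition \ref{treesubquolip}: select an arbitrary $x_\emptyset \in q^{-1}(f_h(\emptyset))$, and for every vertex $v$ with parent $u$ exploit the co-uniform continuity of $q$ to choose $x_v \in q^{-1}(f_h(v))$ with $\|x_v - x_u\|$ controlled by $\|f_h(v) - f_h(u)\|$. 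Because Lipschitz constants on a graph are detected by adjacent pairs and $q$ behaves Lipschitzly at the chosen scale, the resulting map $v \mapsto x_v$ will be a bi-Lipschitz embedding of $\Lambda \cdot T^\omega_h$ into $X$ with distortion bounded uniformly in $h$ in terms of $D$ and the Lipschitz and co-Lipschitz constants of $q$ at scale $\Lambda$. This will contradict Theorem \ref{treedist}, which forces $c_X(T^\omega_h) \to \infty$, yielding the desired renorming of $Y$ and thereby its reflexivity.

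The hard part will be ensuring that the lifting step still goes through when $q$ is only a uniform (rather than a Lipschitz) quotient: one needs a single scale at which $q$ is simultaneously Lipschitz from above and co-Lipschitz from below. Co-Lipschitzness at any preassigned scale is immediate from the strict positivity of $\rho$ in the definition of uniform quotient, while Lipschitzness at large scales is the classical Corson--Klee-type linearization of uniformly continuous maps between Banach spaces. Combining these two properties at a well-chosen rescaling factor $\Lambda$ is what allows the adjacent-vertex selection argument of Proposition \ref{treesubquolip} to be transported verbatim from the Lipschitz-quotient setting to the uniform-quotient setting, and the separability of $Y$ enters only through the metric characterization of $\cC_{(\beta)}$ on which the contrapositive is based.
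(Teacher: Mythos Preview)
Your overall route matches the paper's account of the \cite{DKR2014} argument: use the tree characterization to force the trees into $Y$, lift them through the uniform quotient (your scaling-plus-Corson--Klee argument is exactly the mechanism behind Proposition~\ref{discrepancy}), and contradict Theorem~\ref{treedist}. That part is fine.

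There is, however, a genuine logical gap in your reduction. You argue that it suffices to prove $Y\in\cC_{(\beta)}$ because property~$(\beta)$ implies reflexivity; then, assuming $Y\notin\cC_{(\beta)}$, you invoke the metric characterization to get $\sup_h c_Y(T^\omega_h)<\infty$. But the only metric characterization available in the paper is Theorem~\ref{BKL}, whose hypothesis is that $Y$ is \emph{reflexive}. The implication ``$Y\notin\cC_{(\beta)}\Rightarrow(T^\omega_h)_h$ equi-embeds into $Y$'' is \emph{not} supplied by Theorem~\ref{BKL} when $Y$ might fail to be reflexive, and that is precisely one of the alternatives you must rule out when you assume $Y\notin\cC_{(\beta)}$ (by Theorem~\ref{equivalences}, $Y\notin\cC_{(\beta)}$ means either $Y$ is not reflexive, or $Y$ is reflexive but not a.u.s.\ and a.u.c.\ renormable). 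Your contrapositive therefore has an uncovered case.

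The paper's sketch of the \cite{DKR2014} proof handles this by establishing reflexivity of $Y$ \emph{first and independently}, via ``one of the numerous James' characterizations of reflexivity'', and only then appealing to Theorem~\ref{BKL} for the $(\beta)$-renorming. Once reflexivity is in hand, your lifting argument goes through exactly as you describe and agrees with the paper's use of Proposition~\ref{treesubquolip} and Proposition~\ref{discrepancy}. So the fix is simply to insert the James step for reflexivity before the contraposition, rather than trying to obtain reflexivity as a corollary of the conclusion.
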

The fact that $Y$ must be reflexive is a consequence of one of the numerous James' characterizations of reflexivity. The core of their proof relies on Theorem \ref{treenosubquotient} below.

\begin{theorem}[\cite{DKR2014}]\label{treenosubquotient}
$T^{\omega}_{\omega}$ is not a Lipschitz subquotient of any Banach space admitting an equivalent norm with property ($\beta$).
\end{theorem}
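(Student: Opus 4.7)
The plan is to reduce Theorem \ref{treenosubquotient} to the distortion lower bound of Theorem \ref{treedist} via the tree subquotient/embedding equivalence established in Proposition \ref{treesubquolip}. Arguing by contradiction, I would suppose that $T^{\omega}_{\omega}$ is a Lipschitz subquotient of some Banach space $Y$ carrying an equivalent norm with property $(\beta)$, so that there is a subset $Z\subseteq Y$ and a Lipschitz quotient map $f\colon Z\to T^{\omega}_{\omega}$ with $\codist(f)\le\alpha<\infty$.

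The first step is to confirm that Proposition \ref{treesubquolip} applies to $T^{\omega}_{\omega}$, a tree of infinite height. Its proof produces a pre-image $z_v\in Z$ for each vertex $v$ by a height-by-height inductive selection ensuring that $d_Y(z_v,z_w)\le\colip(f)\,\rho_{T^{\omega}_{\omega}}(v,w)$ for every adjacent pair $(v,w)$. Nothing in this selection requires the tree to have finitely many vertices: the induction proceeds level by level, yielding a bi-Lipschitz embedding $g\colon T^{\omega}_{\omega}\to Y$ with $\dist(g)\le\codist(f)\le\alpha$.

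The second step is an immediate monotonicity observation. For every $h\in\bn$, the subtree of $T^{\omega}_{\omega}$ consisting of all vertices at height at most $h$ is canonically isometric to $T^{\omega}_{h}$, so restricting $g$ to this subtree produces a bi-Lipschitz embedding of $T^{\omega}_{h}$ into $Y$ with distortion at most $\alpha$. Hence $\sup_{h\ge1}c_Y(T^{\omega}_{h})\le\alpha<\infty$, directly contradicting Theorem \ref{treedist}.

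The only conceptual step, and indeed the whole point of this application, is the collapse of the Lipschitz subquotient problem for trees to the bi-Lipschitz embedding problem via Proposition \ref{treesubquolip}; once this is in hand, no further ``fork-type'' machinery of the kind used in \cite{LimaR2012}, \cite{DKLR2014}, \cite{Zhang2015}, \cite{DKR2014} is needed. The main thing to verify — and the only potential obstacle — is that the inductive selection in Proposition \ref{treesubquolip} extends transparently to trees of infinite height, which it does because the argument only ever uses the cost of moving between adjacent vertices and constructs the lifting one vertex at a time.
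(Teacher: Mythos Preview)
Your proposal is correct and matches the paper's own argument exactly: the paper states that Theorem \ref{treenosubquotient} ``is a direct consequence of Theorem \ref{treedist} and Proposition \ref{treesubquolip},'' and your reduction via a bi-Lipschitz lift of $T^\omega_\omega$ followed by restriction to $T^\omega_h$ is precisely that. Your extra care in noting that the level-by-level selection in Proposition \ref{treesubquolip} works for trees of infinite height is appropriate, since the proposition is stated for arbitrary weighted trees.
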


Even though our approach is based on similar ideas, we circumvent the technical ``fork argument'' by splitting its proof mechanism into two distinct quantitative problems, interesting in their own right, that can be treated by rather elementary techniques. For instance, the proof in \cite{DKR2014} of Theorem \ref{treenosubquotient} is very clever but somehow delicate. The argument requires the introduction of the parasol graph with infinitely many levels and the implementation of the ``fork argument'' with respect to a certain type of liftings. Our alternative proof of Theorem \ref{treenosubquotient}, which is a direct consequence of Theorem \ref{treedist} and Proposition \ref{treesubquolip}, is elementary and avoids this highly technical and lengthy argument. It will also be clear in a moment that Theorem \ref{refinement}, Theorem \ref{couniform}, and Theorem \ref{lpcoarse} to a certain extent, fit naturally into the same framework.

\begin{defn}
Let $X$ and $Y$ be metric spaces and $I$ a subinterval of $[0,\infty)$. A map $f\colon X\to Y$ is called an $I$-range Lipschitz quotient map if there exist $C,L\in(0,\infty)$ depending on $I$ such that for all $r\in I$ and $x\in X$ one has
\begin{align}\label{Irange}
B_Y(f(x),\frac{r}{C})\subset f(B_X(x,r))\subset B_Y(f(x),Lr).
\end{align}
We say that $f$ is $I$-range Lipschitz (resp. $I$-range co-Lipschitz) if the right (resp. left) inclusion in \eqref{Irange} is satisfied for all $r\in I$ and $x\in X$.

In particular, we say that $f$ is Lipschitz (resp. co-Lipschitz) for large distances if it is $[s,\infty)$-range Lipschitz (resp. $[s,\infty)$-range co-Lipschitz) for every $s\in(0,\infty)$. $f$ is called a large scale Lipschitz quotient map, and we say that $Y$ is a large scale Lipschitz quotient of $X$ if $f$ is a $[s,\infty)$-range Lipschitz quotient map for every $s\in(0,\infty)$.
\end{defn}

Recall also that a metric space $X$ is said to be metrically convex if for every $x_0, x_1\in X$ and $t\in(0,1)$ there exists $x_t\in X$ so that $d_X(x_0, x_t)=td_X(x_0, x_1)$ and $d_X(x_1, x_t)=(1-t)d_X(x_0, x_1)$. It is a classical fact that a uniform quotient map between metrically convex spaces is actually a large scale Lipschitz quotient map (cf. \cite{BLbook} Proposition 1.11 and Lemma 11.11). The quantitative stand that we have taken turns out to be extremely efficient due to Proposition \ref{discrepancy} below, a qualitative version of which (in the special case where $G_n=T_\omega^\omega$) is implicit in \cite{DKR2014}.

\begin{proposition}\label{discrepancy}
Let $X$ and $Y$ be Banach spaces such that $Y$ is a uniform subquotient of $X$, where the uniform quotient map is Lipschitz for large distances. Let $(G_n)_{n=1}^\infty$ be a sequence of unweighted connected simple graphs. Then $qc_X(G_n)=O(qc_Y(G_n))$ for all $n\in\bn$.
\end{proposition}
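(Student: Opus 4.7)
The plan is to compose the uniform quotient $\psi\colon Z\to Y$ with a near-optimal Lipschitz quotient onto $G_n$ from $Y$, and to argue that the small-scale defect of $\psi$ (since $\psi$ is only Lipschitz for \emph{large} distances) can be absorbed by restricting to a separated net, using the discreteness of the unweighted graph $G_n$. Fix $\varepsilon>0$ and let $f\colon W\to G_n$, $W\subseteq Y$, be a Lipschitz quotient with $\codist(f)\le qc_Y(G_n)+\varepsilon=:\alpha$. Since $X$ and $Y$ are Banach (hence metrically convex), the uniform quotient $\psi$ is a large-scale Lipschitz quotient with uniform constants: there exist $L_\psi,C_\psi>0$ and $s_\psi>0$ such that $\|\psi(x)-\psi(x')\|\le L_\psi\|x-x'\|$ whenever $\|x-x'\|\ge s_\psi$, $B_Y(\psi(x),r/C_\psi)\subseteq \psi(B_X(x,r)\cap Z)$ whenever $r\ge s_\psi$, and $\psi$ is uniformly continuous with modulus $\omega_\psi$.

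I would take $V_0:=\psi^{-1}(W)\cap Z$ and let $V\subseteq V_0$ be a maximal $s_\psi$-separated subset. Define $g:=f\circ\psi$ restricted to $V$. For distinct $x,x'\in V$ one has $\|x-x'\|\ge s_\psi$, so the Lipschitz estimate for $\psi$ at that scale yields $\lip(g)\le \lip(f)\cdot L_\psi$. For the co-Lipschitz bound, given $x\in V$ and $v'\in G_n$, first use the co-Lipschitz property of $f$ to find $y'\in W$ with $f(y')=v'$ and $\|y'-\psi(x)\|\le \colip(f)\rho_{G_n}(g(x),v')$; then use the co-Lipschitz property of $\psi$ for large distances (choosing the radius $r=\max(C_\psi\|y'-\psi(x)\|,s_\psi)$) to obtain $\tilde x'\in V_0$ with $\psi(\tilde x')=y'$ and $\|\tilde x'-x\|\le C_\psi\|y'-\psi(x)\|+s_\psi$. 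By maximality of $V$ pick $x'\in V$ with $\|x'-\tilde x'\|<s_\psi$, giving $\|x'-x\|\le C_\psi\colip(f)\rho_{G_n}(g(x),v')+2s_\psi$, and since $\rho_{G_n}(g(x),v')\ge 1$ for $v'\ne g(x)$ this yields $\colip(g)\le C_\psi\colip(f)+2s_\psi$.

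The essential point to verify is that the back-off does not change the image, \ie that $g(x')=v'$. This follows from the estimate $\rho_{G_n}(g(x'),v')\le \lip(f)\,\omega_\psi(s_\psi)$ together with the discreteness of $G_n$: if the right-hand side is strictly less than the minimum edge length, $g(x')$ must coincide with $v'$. To guarantee this uniformly I exploit the scale invariance of codistortion. Rescaling $G_n$ by any $\mu>0$ leaves $\codist(f)$ unchanged but turns $\lip(f)$ into $\mu\lip(f)$ and the minimum edge length into $\mu$; thus the required inequality reduces to the scale-invariant condition $\lip(f)\,\omega_\psi(s_\psi)<1$. The back-off error is then bounded by a constant depending only on $\psi$, and the second summand in
\[
\codist(g)\le \lip(f)L_\psi\bigl(C_\psi\colip(f)+2s_\psi\bigr)=L_\psi C_\psi\cdot\lip(f)\colip(f)+2s_\psi L_\psi\lip(f)
\]
is absorbed into an additive constant. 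Combining the two terms and using $\codist(g)\ge 1$ gives $\codist(g)\le K\alpha$ for a constant $K=K(\psi)$ independent of $n$, whence $qc_X(G_n)\le K\,qc_Y(G_n)$ after letting $\varepsilon\to 0$.

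The main obstacle will be the rigorous handling of the back-off step: ensuring simultaneously that (i) distinct points of $V$ are $s_\psi$-separated so the Lipschitz bound for $\psi$ applies, (ii) every $\tilde x'$ arising from the lifting has a nearby companion in $V$ that still maps to the correct vertex of $G_n$, and (iii) the additive $s_\psi$ in the co-Lipschitz estimate is not inflated when compared against $\rho_{G_n}(g(x),v')\ge 1$. The discreteness of the unweighted graph $G_n$ and the scale invariance of $\codist(f)$ are the two features that make these three requirements compatible with a constant depending only on the geometry of $\psi$.
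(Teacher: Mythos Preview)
Your overall strategy—compose the two quotient maps and use the discreteness of the unweighted graph to absorb the small-scale defect—is the right one, and with one correction it goes through. The gap is in your rescaling step. Rescaling $G_n$ by $\mu$ turns the required inequality $\lip(f)\,\omega_\psi(s_\psi)<(\text{min edge length})$ into $\mu\lip(f)\,\omega_\psi(s_\psi)<\mu$, which is the \emph{same} condition; noting that it is ``scale-invariant'' does not help you satisfy it. What you actually need is to rescale the domain $W\subset Y$: replacing $W$ by $\lambda W$ and $f$ by $w\mapsto f(w/\lambda)$ leaves $\codist(f)$ unchanged but replaces $\lip(f)$ by $\lip(f)/\lambda$, so for $\lambda$ large enough you get $\lip(f)\,\omega_\psi(s_\psi)<1$ and your back-off point $x'$ really does hit $v'$. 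With this fix the rest of your estimates are fine and yield $qc_X(G_n)\le K\,qc_Y(G_n)$ with $K=K(\psi)$.

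It is worth comparing with the paper's argument, which is shorter because it avoids the separated net altogether. The paper normalises so that $\lip(f)=1$ (your $W$-rescaling), takes $Z_n=\psi^{-1}(W)$ with no thinning, and sets $h=f\circ\psi$ on all of $Z_n$. For the Lipschitz bound it uses uniform continuity of $\psi$ directly: there is $\delta>0$ with $\|x-x'\|<\delta\Rightarrow\|\psi(x)-\psi(x')\|<1\Rightarrow h(x)=h(x')$ by discreteness, while for $\|x-x'\|\ge\delta$ the large-distance Lipschitz constant applies. Thus no net and no back-off are needed; the discreteness of $G_n$ is exploited on the \emph{Lipschitz} side rather than the co-Lipschitz side. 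Your route buys nothing extra and costs the additional verification that the net companion $x'$ lands on the correct vertex—exactly the step where your rescaling got tangled.
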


\begin{proof}
Let $Z$ be a subset of $X$ and let $f:Z\rightarrow Y$ be a uniform quotient map that is Lipschitz for large distances. Assume that for every $n\ge 1$, $S_n$ is a subset of $Y$ and $g_n: S_n\rightarrow G_n$ is a Lipschitz quotient map. By a scaling of the set $S_n$ we may without loss of generality assume that $\text{Lip}(g_n)=1$. Let $f_n$ denote the restriction of $f$ to $Z_n:=f^{-1}(S_n)$ and $\lip_{t}(f):=\sup\{\|f(x)-f(y)\|_Y: \|x-y\|_X\ge t\}$ the Lipschitz constant of $f$ for distances larger than $t$. Then for every $n\ge 1$ and every $t\in(0,\infty)$,  $\lip_{t}(f_n)\le\lip_{t}(f)<\infty$. Next it is shown that the maps $h_n:=g_n\circ f_n$ are Lipschitz quotient maps from $Z_n$ onto $G_n$.

\begin{claim}\label{liphn} There exists $\delta\in(0,\infty)$ such that $\lip(h_n)\le\lip_{\delta}(f)$ for all $n\in\bn$.
\end{claim}
\begin{proof}[Proof of Claim \ref{liphn}]\renewcommand{\qedsymbol}{}
Since $f$ is uniformly continuous, there exists $\delta\in(0,\infty)$ so that $\|f(x)-f(y)\|_Y<1$ whenever $\|x-y\|_X<\delta$. For every $x, y\in Z_n$ such that $\|x-y\|<\delta$ one has $h_n(x)=h_n(y)$ since $$\rho_{G_n}(h_n(x),h_n(y))\le\|f_n(x)-f_n(y)\|<1.$$
If $\|x-y\|\ge\delta$ then $$\rho_{G_n}(h_n(x),h_n(y))\le\|f_n(x)-f_n(y)\|\le\lip_{\delta}(f)\|x-y\|.$$
\vskip -.5cm
\end{proof}

\begin{claim}\label{coliphn} There exists $c\in(0,\infty)$ such that $\colip(h_n)\le (c+1)\colip(g_n)$ for all $n\in\bn$.
\end{claim}
\begin{proof}[Proof of Claim \ref{coliphn}]\renewcommand{\qedsymbol}{}
Denote $\text{coLip}(g_n):=D_n\in[1,\infty)$. Since $Y$ is a Banach space it is metrically convex, and hence $f$ as well as its restrictions to $Z_n$ are co-Lipschitz for large distances, there exists $c\in(0,\infty)$ such that for all $n\ge 1$, for all $x\in Z_n$, and for all $r\ge1$,
$$B_{S_n}(f_n(x),\frac{r}{c})\subset f_n(B_{Z_n}(x,r)).$$
For every $x\in Z_n$ one has
\begin{align*}
B_{G_n}(h_n(x),1)&\subset g_n(B_{S_n}(f_n(x),D_n))\\
&\subset g_n\left(B_{S_n}\left(f_n(x),\frac{(c+1)D_n}{c}\right)\right)\subset h_n(B_{Z_n}(x,(c+1)D_n)).
\end{align*}
It follows that $\text{coLip}(h_n)\le (c+1)\colip(g_n)$ since the $G_n$'s are connected graphs. Indeed, to show that the maps $h_n$ are co-Lipschitz with constant, say $C$, it is sufficient to show that $B_{G_n}(h_n(x),1)\subset h_n(B_{Z_n}(x,C))$.
\end{proof}
Therefore there exist $\delta,c\in(0,\infty)$ so that $\codist(h_n)\le (c+1)\lip_{\delta}(f)\codist(g_n)$ for every $n\in\bn$.
\end{proof}

In regards of Proposition \ref{discrepancy} and Proposition \ref{treesubquolip}, our alternative proofs of (stronger forms) of Theorem \ref{refinement} and Theorem \ref{couniform} simply boil down to exhibiting a discrepancy between the $Y$-distortion and the $X$-distortion of the complete countably branching trees. This discrepancy is exhibited by comparing the lower bound from Theorem \ref{treedist} with the upper bound from Corollary \ref{cdiam}.

\begin{theorem}\label{betaqstrong}
Let $X$ be a Banach space admitting an equivalent norm with property ($\beta_p$) for some $p\in(1,\infty)$. Assume that a Banach space $Y$ is a uniform subquotient of $X$, where the uniform quotient map is Lipschitz for large distances. Then $\ell_{q}$ is not a uniform subquotient of $Y$ for any $q>p$ such that the uniform quotient map is Lipschitz for large distances.
\end{theorem}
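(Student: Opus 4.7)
The plan is to argue by contradiction, exploiting the discrepancy between the lower bound on $c_X(T^\omega_h)$ from Theorem \ref{treedist} and the tight upper bound on $c_{\ell_q}(T^\omega_h)$ from Corollary \ref{cdiam}. Suppose toward a contradiction that $\ell_q$ is a uniform subquotient of $Y$ via a map that is Lipschitz for large distances. Because Banach spaces are metrically convex, both uniform quotient maps under consideration (one on a subset of $X$ onto $Y$, the other on a subset of $Y$ onto $\ell_q$) are large scale Lipschitz quotient maps, so Proposition \ref{discrepancy} applies in each case.

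I would then test the situation against the family $(T^\omega_n)_{n\ge 2}$ of complete countably branching trees, viewed as unit-weighted trees, for which $\cdiam(T^\omega_n)=n$. Corollary \ref{cdiam} yields $c_{\ell_q}(T^\omega_n) \lesssim \log(n)^{1/q}$ with constant depending only on $q$. Since the inverse of a bi-Lipschitz embedding is a Lipschitz quotient onto its image whose codistortion equals the original distortion, this translates to $qc_{\ell_q}(T^\omega_n) \le c_{\ell_q}(T^\omega_n) \lesssim \log(n)^{1/q}$. Applying Proposition \ref{discrepancy} with $(Y,\ell_q)$ in the role of $(X,Y)$ there yields $qc_Y(T^\omega_n) \lesssim \log(n)^{1/q}$, and a second application of the same proposition to the original pair $(X,Y)$ delivers $qc_X(T^\omega_n) \lesssim \log(n)^{1/q}$, with constants independent of $n$. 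Because $T^\omega_n$ is a weighted tree, Proposition \ref{treesubquolip} upgrades this to a bi-Lipschitz estimate $c_X(T^\omega_n) = qc_X(T^\omega_n) \lesssim \log(n)^{1/q}$.

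On the other hand, Theorem \ref{treedist} supplies the lower bound $c_X(T^\omega_n) \ge 2\gamma^{1/p}\log(n/2)^{1/p}$, where $\gamma = \gamma(X) > 0$ depends only on the $(\beta_p)$ renorming of $X$. Combining both estimates forces
\[
\log(n)^{1/p - 1/q} \lesssim 1
\]
for all sufficiently large $n$; since $q > p$ the exponent $1/p - 1/q$ is strictly positive, so letting $n\to\infty$ produces a contradiction. The only real conceptual ingredient is the transitivity afforded by iterating Proposition \ref{discrepancy} along the chain $\ell_q \to Y \to X$; once this is in place, the result falls out of a direct comparison of the two already-established bounds on the $\ell_p$- and $\ell_q$-distortion of $T^\omega_n$, and no additional fork-type argument is needed. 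The only subtlety to watch is making sure the constants hidden in the two successive applications of Proposition \ref{discrepancy} remain uniform in $n$, which is exactly the content of that proposition.
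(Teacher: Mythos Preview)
Your proof is correct and follows essentially the same route as the paper's: contradict Theorem \ref{treedist} by chaining Proposition \ref{discrepancy} through $\ell_q\to Y\to X$, feeding in the upper bound from Corollary \ref{cdiam} and invoking Proposition \ref{treesubquolip} to pass between $qc$ and $c$. The only (harmless) slip is that $\cdiam(T^\omega_n)=2n$, not $n$, which does not affect the asymptotics.
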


\begin{proof}
If $X$ is a Banach space with property ($\beta_p$) for some $p\in(1,\infty)$, then $c_X(T^\omega_h)=\Omega(\log(h)^{\frac{1}{p}})$. Now, if $\ell_{q}$ is a uniform subquotient of $Y$ for some $q>p$ such that the uniform quotient map is Lipschitz for large distances and if $Y$ is a uniform subquotient of $X$, where the uniform quotient map is also Lipschitz for large distances, then it follows from Proposition \ref{treesubquolip}, Proposition \ref{discrepancy} and Corollary \ref{cdiam} that $c_X(T^\omega_h)=O(\log(h)^{\frac{1}{q}})$. There is a contradiction for $h$ big enough.
\end{proof}

\begin{theorem}\label{costrong}
$\co$ is not a uniform subquotient of a Banach space admitting an equivalent norm with property ($\beta$) such that the uniform quotient map is Lipschitz for large distances.
\end{theorem}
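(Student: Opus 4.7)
The plan is to derive the statement from Theorem \ref{treedist} by using the complete countably branching trees $T^\omega_h$ as obstructions, with Aharoni's theorem serving as the bridge that makes $\co$ a ``universal'' target in a way analogous to how Corollary \ref{cdiam} made $\ell_p$ a target in the proof of Theorem \ref{betaqstrong}. The overall strategy is therefore identical in spirit to that proof: produce a discrepancy between the $X$-distortion and the $\co$-distortion of the family $(T^\omega_h)_{h\ge 1}$ and feed it into Proposition \ref{discrepancy}.

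More concretely, I would proceed by contradiction. Assume $\co$ is a uniform subquotient of some Banach space $X$ admitting an equivalent norm with property $(\beta)$, where the uniform quotient map is Lipschitz for large distances; after renorming, $X$ has property $(\beta_p)$ for some $p\in(1,\infty)$. First, invoke Aharoni's theorem, according to which every separable metric space (and in particular every countable tree $T^\omega_h$) admits a bi-Lipschitz embedding into $\co$ with distortion at most some universal constant $A\in[1,\infty)$. Consequently $c_{\co}(T^\omega_h)\le A$ for all $h\ge 1$, and Proposition \ref{treesubquolip} upgrades this to $qc_{\co}(T^\omega_h)=c_{\co}(T^\omega_h)\le A$ for all $h\ge 1$.

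Next, applying Proposition \ref{discrepancy} with $Y=\co$ and $G_n=T^\omega_n$ (each of which is an unweighted connected simple graph) yields a constant $K=K(X,\co)\in(0,\infty)$, independent of $h$, such that
$$qc_X(T^\omega_h)\le K\cdot qc_{\co}(T^\omega_h)\le KA \quad\text{for every } h\ge 1.$$
Using Proposition \ref{treesubquolip} one more time converts this into a uniform bound on genuine distortion: $c_X(T^\omega_h)=qc_X(T^\omega_h)\le KA$ for every $h\ge 1$. This contradicts the unboundedness $\sup_{h\ge 1}c_X(T^\omega_h)=\infty$ guaranteed by Theorem \ref{treedist}, completing the proof.

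The main obstacle, if any, is purely a matter of citation and formulation: one needs the universal embedding of separable metric spaces into $\co$ with distortion bounded independently of the space, and one needs to verify that the uniform quotient map from $X$ onto a subset of $\co$ still satisfies the hypotheses of Proposition \ref{discrepancy} (co-Lipschitz for large distances, which uses metric convexity of $X$ as in the discussion before that proposition). Once these are in place, the proof is a direct repetition of the argument used for Theorem \ref{betaqstrong}, with Aharoni's theorem playing the role that Corollary \ref{cdiam} played there.
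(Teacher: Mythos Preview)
Your proof is correct and follows essentially the same route as the paper's own argument: both reduce to showing $c_{\co}(T^\omega_h)=O(1)$, then apply Proposition~\ref{treesubquolip} and Proposition~\ref{discrepancy} to transfer this to a uniform bound on $c_X(T^\omega_h)$, contradicting Theorem~\ref{treedist}. The only difference is cosmetic: the paper obtains the uniform bound $c_{\co}(T^\omega_h)\le 2$ via the summing basis (and notes that in fact $c_{\co}(T^\omega_h)=1$ by \cite{KaltonLancien2008}), whereas you invoke Aharoni's theorem; either works. One small slip: in your closing remarks, the co-Lipschitz-for-large-distances step inside the proof of Proposition~\ref{discrepancy} uses the metric convexity of the \emph{target} $Y=\co$, not of $X$---but since you are applying the proposition as a black box, this is already absorbed in its statement and requires no separate verification.
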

\begin{proof}
Assume that $\co$ is a uniform subquotient of a Banach space $X$ admitting an equivalent norm with property ($\beta$) such that the uniform quotient map is Lipschitz for large distances. Then it follows from Proposition \ref{discrepancy} that $qc_X(T_h^\omega)=O(qc_{\co}(T_h^\omega))$ for all $h\in\bn$, but it is easy to show using the summing basis that $c_{\co}(T_h^\omega)\le 2$ (actually that $c_{\co}(T_h^\omega)=1$ follows from Theorem 6.3 in \cite{KaltonLancien2008}). Since $qc_X(T^\omega_h)=c_X(T^\omega_h)$  and $qc_{\co}(T^\omega_h)=c_{\co}(T^\omega_h)$ by Proposition \ref{treesubquolip}, one has $c_X(T_h^\omega)=O(c_{\co}(T_h^\omega))=O(1)$ for all $h\in\bn$, but this contradicts Theorem \ref{treedist}.
\end{proof}

\begin{rem} The conclusion of Theorem \ref{betaqstrong} (resp. Theorem \ref{costrong}) can be strengthened. Indeed, only the fact that $Y$ satisfies $c_Y(T_h^\omega)=o(\log(h)^{\frac{1}{p}})$ is needed (resp. $\co$ can be replaced by any Banach space $Y$ such that $c_Y(T_h^\omega)=o(\log(h)^{\frac{1}{p}})$ for every $p\in(1,\infty)$).
\end{rem}

The case of coarse quotients is a bit more delicate. A map $f\colon X\to Y$ between two metric spaces $X$ and $Y$ is said to be coarsely continuous if $\omega_f(t)<\infty$ for all $t>0$, where $\omega_f$ is the expansion modulus of $f$ defined by
$$\omega_f(t):=\sup\{d_Y(f(x),f(y)): d_X(x,y)\le t\}.$$
$f$ is said to be co-coarsely continuous with constant $K\in[0,\infty)$ if for every $\varepsilon>0$ there exists $\delta:=\delta(\varepsilon)>0$ so that for every $x\in X$, $$B_Y(f(x),\varepsilon)\subset f(B_X(x,\delta))^K,$$
where for a subset $Z$ of a metric space $Y$ the notation $Z^K$ means the $K$-neighborhood of $Z$, i.e., $Z^K:=\{y\in Y\colon d_Y(y,z)\le K~\text{for some}~z\in Z\}$. A map $f$ is then said to be a coarse quotient map if $f$ is both co-coarsely continuous and coarsely continuous, and in that case we say $Y$ is a coarse quotient of $X$. $Y$ is said to be a coarse subquotient of $X$ if $Y$ is a coarse quotient of a subset of $X$.

\smallskip

The technical lemma below is needed to prove an analogue of Proposition \ref{discrepancy} in the coarse setting. The proof can be found in \cite{Zhang2015} in a slightly different context. Roughly speaking it says that a subset of a quotient is actually a quotient of a subset. Note that this argument is straightforward in the uniform case, but in the coarse setting it requires some effort. For the sake of completeness the proof is presented here.

\begin{lemma}\label{csubquo}
Let $X$ and $Y$ be metric spaces and $f:X\rightarrow Y$ a coarse quotient map with constant $K$. Assume that $Y$ is metrically convex and $S$ is a subset of $Y$. Then there exist a subset $Z\subset X$ and a map $g:Z\rightarrow S$ satisfying the following:
\begin{enumerate}[(i)]
\item\label{case1} If $K=0$, then for every $\varepsilon>0$ there exists $c_1:=c_1(\varepsilon)>0$ such that for all $x\in Z$ and $r\ge\varepsilon$,
    \begin{align}\label{Kzero}
    B_S(g(x),r)\subset g(B_Z(x,c_1r)).
    \end{align}
\item\label{case2} If $K>0$, then there exists $c_2:=c_2(K)>0$ such that for all $x\in Z$ and $r>0$,
    \begin{align}\label{Kpositive}
    B_S(g(x),r)\subset g(B_Z(x,c_2r))^{4K}.
    \end{align}
\end{enumerate}
\end{lemma}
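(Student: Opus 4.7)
The plan is to choose $Z \subset X$ and $g \colon Z \to S$ by lifting targets in $S$ back to $X$ along a polygonal path in $Y$ built using metric convexity. In case (i) where $K=0$, define $Z := f^{-1}(S)$ and $g := f|_Z$. In case (ii) where $K>0$, enlarge to $Z := f^{-1}(S^K)$ and for each $z \in Z$ pick $g(z) \in S$ with $d_Y(f(z), g(z)) \leq 2K$ (which is possible because $d_Y(f(z), S) \leq K$). In both cases the engine is the following chaining argument. Given $x \in Z$ and $y \in B_S(g(x), r)$, string a chain $y_0 := f(x), y_1, \ldots, y_n := y$ along a geodesic-type path with each $d_Y(y_{i-1}, y_i) \leq \eta$ for a mesh $\eta$ to be specified, and then inductively lift $y_i$ to $x_i \in X$ by applying the co-coarse continuity of $f$ at $x_{i-1}$, starting from $x_0 := x$; the final iterate $x_n$ will be the desired $z$.

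In case (i), choose mesh $\eta = \varepsilon$ and let $\delta(\varepsilon) > 0$ be given by co-coarseness with constant $K=0$. Exact lifting yields $f(x_i) = y_i$ and $d_X(x_{i-1}, x_i) \leq \delta(\varepsilon)$. Then $x_n \in f^{-1}(S) = Z$ because $f(x_n) = y \in S$, and whenever $r \geq \varepsilon$, $n \leq \lceil r/\varepsilon \rceil \leq 2r/\varepsilon$, so $d_X(x, x_n) \leq 2\delta(\varepsilon)\, r/\varepsilon$. Setting $c_1(\varepsilon) := 2\delta(\varepsilon)/\varepsilon$ establishes \eqref{Kzero}.

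In case (ii), choose mesh $\eta = K$ and let $\delta := \delta(2K)$ be given by co-coarseness. The enlargement of the co-coarse parameter from $K$ to $2K$ is the central bookkeeping point: the lifting only returns $x_i \in B_X(x_{i-1}, \delta)$ with $d_Y(f(x_i), y_i) \leq K$, so at the next step
\[
d_Y(y_{i+1}, f(x_i)) \leq d_Y(y_{i+1}, y_i) + d_Y(y_i, f(x_i)) \leq 2K,
\]
which forces the application of co-coarseness with parameter $2K$. The polygonal chain has total length at most $d_Y(f(x), y) \leq d_Y(f(x), g(x)) + r \leq 2K + r$. For $r \leq 2K$ we simply take $z := x$ and observe that $d_Y(g(x), y) \leq r \leq 2K \leq 4K$, so $y \in g(B_Z(x, c_2 r))^{4K}$ for any $c_2 > 0$. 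For $r > 2K$ we get $n \leq \lceil (r+2K)/K \rceil \leq 3r/K$, and the lifted endpoint satisfies $f(x_n) \in B_Y(y, K) \subset S^K$, hence $x_n \in Z$, together with
\[
d_Y(g(x_n), y) \leq d_Y(g(x_n), f(x_n)) + d_Y(f(x_n), y) \leq 2K + K = 3K \leq 4K
\]
and $d_X(x, x_n) \leq n\delta \leq 3\delta\, r/K$. Setting $c_2(K) := 3\delta(2K)/K$ establishes \eqref{Kpositive}.

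The principal obstacle is the error accumulation in case (ii): each co-coarse lifting step introduces a residual $K$-mismatch between the intended target $y_i$ and the actual image $f(x_i)$, which both doubles the co-coarse parameter needed at the next step and produces, after composing with the $2K$-slack in the definition of $g$, the final $4K$-neighborhood in the conclusion. Once the constants are tracked, the proof reduces to this single chaining construction applied in two regimes.
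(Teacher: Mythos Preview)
Your proof is correct and follows essentially the same chaining-through-metric-convexity argument as the paper, with the same choices $Z=f^{-1}(S)$, $g=f|_Z$ in case~(i) and $Z=f^{-1}(S^K)$ with a nearest-point selection into $S$ in case~(ii). The only organizational difference is that the paper first isolates the claim $B_Y(f(x),r)\subset f(B_X(x,cr))^K$ for all $r\ge\varepsilon>2K$ and then, in case~(ii), performs a second short lifting step to land back in $Z=f^{-1}(S^K)$, whereas you run the chain once directly from $f(x)$ to the target $y\in S$ and observe that the endpoint automatically lies in $Z$; this yields slightly better constants but is the same mechanism.
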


\begin{proof}
First we claim that $f$ satisfies the following property:

\smallskip

\noindent For every $\varepsilon>2K$, there exists $c:=c(\varepsilon)>0$ so that for all $x\in X$ and $r\ge\varepsilon$, $$B_Y(f(x),r)\subset f(B_X(x,cr))^K.$$

\noindent Indeed, let $n$ be the positive integer so that $(n-1)\varepsilon\le r<n\varepsilon$ and assume that $y\in B_Y(f(x),r)$. Since $Y$ is metrically convex, there exist $\{y_i\}_{i=0}^{2n}$ with $y_0=f(x)$ and $y_{2n}=y$ such that $d(y_{i},y_{i-1})\le\frac{\varepsilon}{2}$ for all $i$. It follows from the definition of co-coarse continuity that there exists $\delta:=\delta(\varepsilon)>0$ such that $y_1\in B_Y(f(x),\varepsilon)\subset f(B_X(x,\delta))^K$, so $d_Y(y_1,f(x_1))\le K$ for some $x_1\in B_X(x,\delta)$, and hence it follows from the triangle inequality that $y_2\in B_Y(f(x_1),\varepsilon)$. We proceed inductively to get $\{x_i\}_{i=1}^{2n}$ such that $d_X(x_{i},x_{i-1})\le\delta$ and $d_Y(y_i,f(x_i))\le K$ for all $i$. This implies $y\in f(B_X(x,2n\delta))^K\subset f(B_X(x,cr))^K$, where $c:=c(\varepsilon)=\frac{4\delta(\varepsilon)}{\varepsilon}$.

\medskip

Define $p: S^K\rightarrow S$ by $p(a)=a$ if $a\in S$ and $p(a)=s_a$ otherwise, where $s_a$ is any point in $S$ within distance $K$ from $a$. We now show that in both cases \eqref{case1} and \eqref{case2} one can take $Z=f^{-1}(S^K)$ and $g=p\circ\widetilde{f}$, where $\widetilde{f}: Z\to S^K$ is the restriction of $f$ to $Z$.

Indeed, in case \eqref{case1} when $K=0$, the map $p$ becomes the identity map on $S$ and hence $g: Z\to S$ is the restriction of $f$ to $Z=f^{-1}(S)$. Thus \eqref{Kzero} follows with $c_1(\varepsilon)=c(\varepsilon)$ by the above claim.

In case \eqref{case2} when $K>0$, first observe that the claim still holds for $\varepsilon=2K$, i.e. there exists $\tilde{c}=c(2K)>0$ so that for all $x\in X$ and $r\ge2K$ one has $B_Y(f(x),r)\subset f(B_X(x,\tilde{c}r))^K.$ Now for $x\in Z$ and $r\ge2K$, suppose that $y\in B_{S^K}(\widetilde{f}(x),r)$. Then there exists $u\in B_X(x,\tilde{c}r)$ such that $d_Y(y, f(u))\le K$, and $y\in S^K$ implies that $d_Y(y,s)\leq K$ for some $s\in S$, so
$$s\in B_Y(f(u),2K)\subset f(B_X(u,2K\tilde{c}))^K.$$
Thus there exists $v\in B_X(u,2K\tilde{c})$ such that $d_Y(s,f(v))\le K$ and hence $v\in Z$. It follows that $d_Y(y,\widetilde{f}(v))\le2K$ and $v\in B_Z(x,2K\tilde{c}+\tilde{c}r)\subset B_Z(x,2\tilde{c}r)$, so we have shown that the map $\widetilde{f}: Z\to S^K$ satisfies
$$B_{S^K}(\widetilde{f}(x),r)\subset \widetilde{f}(B_Z(x,2\tilde{c}r))^{2K}$$
for all $x\in Z$ and $r\ge2K$. Therefore for every $x\in Z$ and $r\ge4K$ we have
\begin{align*}
B_S(g(x),r)&\subset p(B_{S^K}(\widetilde{f}(x),r+K))\subset p(B_{S^K}(\widetilde{f}(x),2r))\\
&\subset p\left(\widetilde{f}(B_Z(x,4\tilde{c}r))^{2K}\right)\subset \left(p\circ\widetilde{f}(B_Z(x,4\tilde{c}r))\right)^{\omega_p(2K)}\\
&\subset g(B_Z(x,4\tilde{c}r))^{4K}.
\end{align*}
This implies that \eqref{Kpositive} holds for $c_2:=c_2(K)=4\tilde{c}$.
\end{proof}

\begin{rem}
The map $g$ is actually a coarse quotient map with constant $4K$ even if $Y$ is not metrically convex.
\end{rem}
The next proposition is the analogue of Proposition \ref{discrepancy} that is needed in the coarse case.
\begin{proposition}\label{discrepancycoarse}
Let $X$ and $Y$ be Banach spaces such that $Y$ is a coarse subquotient of $X$, where the coarse quotient map is Lipschitz for large distances. Then there exists $k\in\bn$ (independent of $n$) so that $qc_X(T^\omega_{2^n})=O(qc_Y(T^\omega_{2^{n+k}}))$ for all $n\in\bn$.
\end{proposition}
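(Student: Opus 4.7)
The plan is to mimic the proof of Proposition \ref{discrepancy}, using the $k$ extra levels in the target tree $T^\omega_{2^{n+k}}$ as slack to absorb the $4K$-defect introduced by Lemma \ref{csubquo}\eqref{case2}. Let $f\colon Z\to Y$ be the coarse quotient map (with constant $K$) that is Lipschitz for large distances, fix $\delta_0>0$ with $L_0:=\lip_{\delta_0}(f)<\infty$, and let $g\colon S_n\to T^\omega_{2^{n+k}}$ be a Lipschitz quotient map with $\lip(g)=1$ (after rescaling $S_n$) and $\colip(g)=D_n:=qc_Y(T^\omega_{2^{n+k}})$. Lemma \ref{csubquo}\eqref{case2} yields $Z_n\subset X$ and $g'\colon Z_n\to S_n$ satisfying $B_{S_n}(g'(z),r)\subset g'(B_{Z_n}(z,c_2r))^{4K}$ for every $r>0$; set $h_n:=g\circ g'$. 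The geometric key is that $T^\omega_{2^{n+k}}$ contains a sub-tree $T'_n$ consisting of the root together with all descendants at levels that are positive multiples of $2^k$, and this sub-tree is isometric to a $2^k$-rescaled copy of $T^\omega_{2^n}$ via a canonical isomorphism $\iota\colon T^\omega_{2^n}\to T'_n$. The idea is to lift only the vertices of $T'_n$ to $X$, using the scale $2^k$ as a cushion against the $4K$-slack.

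Inductively along $T'_n$, for each $v\in T'_n$ pick $z_v\in Z_n$ so that $\rho_{T^\omega_{2^{n+k}}}(h_n(z_v),v)\le 4K$, and when $v$ has parent $u\in T'_n$, so that $\|z_v-z_u\|_X\le c_2D_n(2^k+4K)$. This is possible: given $z_u$, the vertex $v$ lies within distance $2^k+4K$ of $h_n(z_u)$ in $T^\omega_{2^{n+k}}$, so the co-Lipschitz property of $g$ produces $s'\in S_n$ with $g(s')=v$ and $\|s'-g'(z_u)\|_Y\le D_n(2^k+4K)$, and the co-coarse inclusion from Lemma \ref{csubquo}\eqref{case2} then places a suitable $z_v$ inside $B_{Z_n}(z_u,c_2D_n(2^k+4K))$ with $\|g'(z_v)-s'\|_Y\le 4K$, whence $\rho_{T^\omega_{2^{n+k}}}(h_n(z_v),v)\le \|g'(z_v)-s'\|_Y\le 4K$. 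Now fix $k$ (depending only on $K$, $\delta_0$, and $\omega_f(\delta_0)$, hence independent of $n$) large enough that $4K\le 2^{k-1}$ and $\omega_f(\delta_0)<2^{k-2}$. For any distinct $v,w\in T'_n$ one has $2^k\le \rho_{T^\omega_{2^{n+k}}}(v,w)\le \rho_{T^\omega_{2^{n+k}}}(h_n(z_v),h_n(z_w))+8K\le \|f(z_v)-f(z_w)\|_Y+10K$, so $\|f(z_v)-f(z_w)\|_Y\ge 2^{k-1}>\omega_f(\delta_0)$, forcing $\|z_v-z_w\|_X\ge \delta_0$ by coarse continuity.

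In particular $z_v\neq z_w$ for $v\neq w$, so $\phi_n\colon W_n:=\{z_v:v\in T'_n\}\to T^\omega_{2^n}$ defined by $\phi_n(z_v):=\iota^{-1}(v)$ is well defined. The inductive step directly gives $\colip(\phi_n)\le c_2D_n(2^k+4K)$, and substituting the large-distance bound $\|f(z_v)-f(z_w)\|_Y\le L_0\|z_v-z_w\|_X$ back into the same triangle inequality yields $\rho_{T^\omega_{2^{n+k}}}(v,w)\le (L_0+10K/\delta_0)\|z_v-z_w\|_X$, and hence, after dividing by $2^k$, $\lip(\phi_n)\le (L_0+10K/\delta_0)/2^k$. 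Multiplying, $\codist(\phi_n)\le 2c_2(L_0+10K/\delta_0)D_n=O(D_n)$ with implicit constant depending only on the intrinsic data of $f$, which establishes $qc_X(T^\omega_{2^n})=O(qc_Y(T^\omega_{2^{n+k}}))$. The main difficulty is synchronizing the constants so that $k$ can be chosen independently of $n$: the crucial separation $\|z_v-z_w\|_X\ge \delta_0$ for distinct $v,w\in T'_n$ must come solely from the cushion $2^k$, forcing $k$ to dominate $K$ and $\omega_f(\delta_0)$, both of which are intrinsic to $f$.
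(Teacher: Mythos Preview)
Your argument is essentially correct and reaches the same conclusion as the paper's proof, but the execution differs. The paper applies Lemma~\ref{csubquo} not to $S_n$ but to the smaller set $\widetilde{S}_n:=g_n^{-1}(T(n))$, where $T(n)\subset T^\omega_{2^{n+k}}$ is the $2^k$-separated copy of $T^\omega_{2^n}$ (your $T'_n$). It then defines $h_n:=i_n\circ \widetilde{g}_n\circ f_n\colon Z_n\to T^\omega_{2^n}$ as a genuine composition and checks directly that $h_n$ is a Lipschitz quotient: the $4K$-enlargement in the co-coarse inclusion disappears at the level of $T(n)$ because $T(n)$ is $2^k$-separated with $2^k>4K$. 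You instead apply Lemma~\ref{csubquo} to all of $S_n$, obtain the defective map $h_n=g\circ g'$, and then invoke the lifting idea of Proposition~\ref{treesubquolip} to select one preimage $z_v$ per vertex $v\in T'_n$, producing a bijection $\phi_n$ whose Lipschitz and co-Lipschitz constants you estimate by hand. Both routes exploit the same geometric cushion (the $2^k$ gap absorbs the $4K$ slack); yours is more explicit and closer in spirit to Proposition~\ref{treesubquolip}, while the paper's is slightly cleaner because the defect is killed once at the level of $T(n)$ rather than tracked through a vertex-by-vertex induction.

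Two small points to tighten. First, your stated condition $4K\le 2^{k-1}$ is not quite enough for the chain $2^k\le \rho(v,w)\le \|f(z_v)-f(z_w)\|_Y+10K$ to yield $\|f(z_v)-f(z_w)\|_Y\ge 2^{k-1}$; you need something like $10K\le 2^{k-1}$, which is of course an equally harmless choice of $k$. Second, Lemma~\ref{csubquo}\eqref{case2} is stated for $K>0$; when $K=0$ you fall into case~\eqref{case1}, but then the defect vanishes and the proof only simplifies, so this is a formality.
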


\begin{proof}
Let $Z$ be a subset of $X$ and let $f:Z\rightarrow Y$ be a coarse quotient map with constant K that is Lipschitz for large distances, i.e. $\lip_t(f)<\infty$ for all $t\in(0,\infty)$. We claim that $k$ can be chosen as the smallest positive integer so that $2^k>\omega_f(1)+4K+1$. Assume that $S_n$ is a subset of $Y$ and $g_n: S_n\rightarrow T_{2^{n+k}}^{\omega}$ is a Lipschitz quotient map. By a scaling of the set $S_n$ we may without loss of generality assume that $\text{Lip}(g_n)=1$. There exist a subset $T(n)\subset T_{2^{n+k}}^{\omega}$, whose distance between points in $T(n)$ is at least $2^k$ and a rescaled isometry $i_n\colon T(n)\to T_{2^n}^{\omega}$ so that $\rho_{{T_{2^n}^\omega}}(i_n(u),i_n(v))=2^{-k}\rho_{T_{2^{n+k}}^{\omega}}(u,v)$ for every $u,v\in T(n)$. Let $\widetilde{S}_n:=g_n^{-1}(T(n))$. By Lemma \ref{csubquo} there exists $c>0$ depending only on $K$, so that for every $n\in\bn$ there exist sets $Z_n\subset Z$ and coarse quotient maps $f_n:Z_n\rightarrow\widetilde{S}_n$ satisfying for all $x\in Z_n$ and $r\ge4K+1$,
$$B_{\widetilde{S}_n}(f_n(x),r)\subset f_n(B_{Z_n}(x,cr))^{4K}.$$
The following diagram summarizes the situation:
$$\begin{array}{ccccccc}
X      & & & & & & \\
 \cup & & & & & & \\
Z      &\overset{f}{\relbar\joinrel\relbar\joinrel\relbar\joinrel\relbar\joinrel\relbar\joinrel\relbar\joinrel\rightarrow} & Y     & & & & \\
        &                                            & \cup & & & & \\
        &                                            & S_n & \overset{g_n}{\relbar\joinrel\relbar\joinrel\relbar\joinrel\relbar\joinrel\relbar\joinrel\relbar\joinrel\rightarrow} & T_{2^{n+k}}^{\omega}& & \\
\cup &                                            & \cup &                                                 & \cup                            & & \\
 Z_n &\overset{f_n}{\relbar\joinrel\relbar\joinrel\relbar\joinrel\relbar\joinrel\relbar\joinrel\relbar\joinrel\rightarrow}& \widetilde{S}_n&\overset{\widetilde{g}_n:={g_n}_{|\widetilde{S}_n}}{\relbar\joinrel\relbar\joinrel\relbar\joinrel\relbar\joinrel\relbar\joinrel\relbar\joinrel\rightarrow}& T(n) &\overset{i_n}{\relbar\joinrel\relbar\joinrel\relbar\joinrel\relbar\joinrel\relbar\joinrel\relbar\joinrel\rightarrow} & T_{2^n}^{\omega}.\\
\end{array}$$

Consider the map $h_n:=i_n\circ\widetilde{g}_n\circ f_n: Z_n\to T_{2^n}^\omega$, where $\widetilde{g}_n$ is the restriction of $g_n$ to $\widetilde{S}_n$.

\begin{claim}\label{liphncoarse} For every $n\in\bn$, $\lip(h_n)\le2^{-k}(2K+\lip_1(f))$.
 \end{claim}
 \begin{proof}[Proof of Claim \ref{liphncoarse}]\renewcommand{\qedsymbol}{}
 For every $x, y\in Z_n$ such that  $\|x-y\|<1$ one has $h_n(x)=h_n(y)$ since $$\rho_{{T_{2^n}^\omega}}(h_n(x),h_n(y))\le2^{-k}\|f_n(x)-f_n(y)\|\le2^{-k}(2K+\|f(x)-f(y)\|)<1.$$
 If $\|x-y\|\ge1$ then $$\rho_{{T_{2^n}^\omega}}(h_n(x),h_n(y))\le2^{-k}(2K+\|f(x)-f(y)\|)\le2^{-k}(2K+\lip_1(f))\|x-y\|.$$ Therefore $\text{Lip}(h_n)\le2^{-k}(2K+\lip_1(f))$.
 \end{proof}
 \begin{claim}\label{coliphncoarse} For every $n\in\bn$, $\colip(h_n)\le2^kc\cdot\colip(g_n)$.
 \end{claim}
 \begin{proof}[Proof of Claim \ref{coliphncoarse}]\renewcommand{\qedsymbol}{} Denote $\text{coLip}(g_n):=D_n\in[1,\infty)$. For every $x\in Z_n$ one has
\begin{align*}
B_{T_{2^n}^\omega}(h_n(x),1)=i_n\left(B_{T(n)}(\widetilde{g}_n\circ f_n(x),2^k)\right)&\subset i_n\circ\widetilde{g}_n\left(B_{\widetilde{S}_n}(f_n(x),2^kD_n)\right)\\
 \subset i_n\circ\widetilde{g}_n\left(f_n\left(B_{Z_n}(x,2^kD_nc)\right)^{4K}\right)&\subset i_n\left(\left(\widetilde{g}_n\circ f_n\left(B_{Z_n}(x,2^kD_nc)\right)\right)^{4K}\right)\\
 &=h_n(B_{Z_n}(x,2^kD_nc)),
\end{align*}
which implies that $\colip(h_n)\le2^kD_nc$.
\end{proof}
\noindent Thus $h_n$ is a Lipschitz quotient map from $Z_n$ onto $T_{2^n}^\omega$, with $\codist(h_n)\le c(2K+\lip_1(f))\codist(g_n),$ where the constant $c(2K+\lip_1(f))$ depends only on $f$ and $K$.

\end{proof}

Now a combination of Proposition \ref{discrepancycoarse}, Proposition \ref{treesubquolip}, Corollary \ref{cdiam}, and Theorem \ref{treedist} gives:

\begin{theorem}
Let $X$ be a Banach space admitting an equivalent norm with property ($\beta_p$) for some $p\in(1,\infty)$. Assume that a Banach space $Y$ is a coarse subquotient of $X$, where the coarse quotient map is Lipschitz for large distances. Then $\ell_{q}$ is not a coarse subquotient of $Y$ for any $q>p$ such that the coarse quotient map is Lipschitz for large distances.
\end{theorem}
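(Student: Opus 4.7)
The plan is to mirror the proof of Theorem \ref{betaqstrong}, substituting Proposition \ref{discrepancycoarse} for Proposition \ref{discrepancy} at every step. First I would equip $X$ with an equivalent norm satisfying property $(\beta_p)$, and argue by contradiction: suppose $\ell_q$ is a coarse subquotient of $Y$ (with the coarse quotient map being Lipschitz for large distances) for some $q > p$.

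Next I would chain two applications of Proposition \ref{discrepancycoarse}. The first, applied to the coarse subquotient $Y$ of $X$, produces a positive integer $k_1$ so that
$$qc_X(T^\omega_{2^n}) = O\bigl(qc_Y(T^\omega_{2^{n+k_1}})\bigr) \quad \text{for all } n \in \bn.$$
The second, applied to the coarse subquotient $\ell_q$ of $Y$, produces a positive integer $k_2$ so that
$$qc_Y(T^\omega_{2^{n+k_1}}) = O\bigl(qc_{\ell_q}(T^\omega_{2^{n+k_1+k_2}})\bigr) \quad \text{for all } n \in \bn.$$

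Now I would invoke Proposition \ref{treesubquolip} to identify $qc_X$ with $c_X$ and $qc_{\ell_q}$ with $c_{\ell_q}$ on every complete countably branching tree. Combined with Corollary \ref{cdiam}, which provides an upper bound $c_{\ell_q}(T^\omega_{2^m}) = O(\log(\cdiam(T^\omega_{2^m}))^{1/q}) = O(m^{1/q})$, the chained inequalities collapse to
$$c_X(T^\omega_{2^n}) = O\bigl((n+k_1+k_2)^{1/q}\bigr) = O\bigl(n^{1/q}\bigr).$$

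Finally, Theorem \ref{treedist} gives the lower bound $c_X(T^\omega_{2^n}) \ge 2\gamma^{1/p}\log(2^{n-1})^{1/p} = \Omega(n^{1/p})$, since $X$ has property $(\beta_p)$. The combination forces $n^{1/p} = O(n^{1/q})$ as $n \to \infty$, contradicting $q > p$. I do not anticipate a genuine obstacle in this argument: all the technical difficulty peculiar to the coarse setting, in particular the need to extract a coarse subquotient of a subset from a coarse subquotient of the full space, has already been absorbed in the proof of Proposition \ref{discrepancycoarse} via Lemma \ref{csubquo}. The harmless shift $n \mapsto n+k_1+k_2$ has no impact on the asymptotic comparison since it is absorbed inside the logarithm.
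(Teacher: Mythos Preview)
Your proposal is correct and follows exactly the route the paper intends: the paper simply states that the theorem follows from combining Proposition \ref{discrepancycoarse}, Proposition \ref{treesubquolip}, Corollary \ref{cdiam}, and Theorem \ref{treedist}, and your argument spells out precisely this combination. The chaining of two applications of Proposition \ref{discrepancycoarse} (once for $Y$ as a coarse subquotient of $X$, once for $\ell_q$ as a coarse subquotient of $Y$) mirrors what the paper does in the uniform case (Theorem \ref{betaqstrong}) with Proposition \ref{discrepancy}, and the shift $n\mapsto n+k_1+k_2$ is indeed asymptotically harmless.
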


\begin{theorem}
$\co$ is not be a coarse subquotient of a Banach space admitting an equivalent norm with property ($\beta$) so that the coarse quotient map is Lipschitz for large distances.
\end{theorem}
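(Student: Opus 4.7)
The plan is to mimic exactly the proof of Theorem \ref{costrong}, replacing Proposition \ref{discrepancy} by its coarse analogue Proposition \ref{discrepancycoarse}. The point is that the bound $c_{\co}(T^\omega_h)\le 2$ (which is established via the summing basis, or follows from Theorem 6.3 in \cite{KaltonLancien2008}) is independent of $h$, so replacing $h$ by $2^{n+k}$ for some fixed integer $k$ does not destroy the uniform upper bound. This is the key observation which makes the coarse version work: the discrepancy witnessed by trees survives the (mild) loss of height incurred in passing from the coarse quotient setting to the large scale Lipschitz quotient setting via Lemma \ref{csubquo}.

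First I would argue by contradiction: assume $\co$ is a coarse subquotient of a Banach space $X$ admitting an equivalent norm with property $(\beta)$, with the coarse quotient map being Lipschitz for large distances. By Proposition \ref{discrepancycoarse} there exists an integer $k\in\bn$ (depending on the coarse quotient map but not on $n$) so that
\[
qc_X(T^\omega_{2^n})=O\bigl(qc_{\co}(T^\omega_{2^{n+k}})\bigr)\qquad\text{for all }n\in\bn.
\]
Next I would invoke Proposition \ref{treesubquolip} to replace each of these quotient codistortions by the corresponding bi-Lipschitz distortion, yielding
\[
c_X(T^\omega_{2^n})=O\bigl(c_{\co}(T^\omega_{2^{n+k}})\bigr)\qquad\text{for all }n\in\bn.
\]

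Then I would use the uniform upper bound $c_{\co}(T^\omega_h)\le 2$ for every $h$ (the same bound that appears in the proof of Theorem \ref{costrong}); since $k$ is fixed this gives $c_X(T^\omega_{2^n})=O(1)$ uniformly in $n$. On the other hand, since $X$ admits an equivalent norm with property $(\beta)$, Theorem \ref{treedist} asserts that $\sup_{h\ge 1}c_X(T^\omega_h)=\infty$, which contradicts the bound just obtained. This completes the proof.

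There is essentially no real obstacle, since all the heavy lifting has already been done: the main work was establishing Proposition \ref{discrepancycoarse} via Lemma \ref{csubquo} (to pass from coarse subquotient to large scale Lipschitz quotient of a subset in a controlled way) and the distortion lower bound Theorem \ref{treedist}. The only point deserving a brief verification is that the ``height shift'' $n\mapsto n+k$ in Proposition \ref{discrepancycoarse} is harmless precisely because the $\co$-distortion of $T^\omega_h$ is bounded independently of $h$; for any Banach space $Y$ with $c_Y(T^\omega_h)=o(\log(h)^{1/p})$ for every $p\in(1,\infty)$, the same argument rules out $Y$ as a coarse subquotient (with the coarse quotient map being Lipschitz for large distances) of any space admitting an equivalent norm with property $(\beta)$, so the statement has the natural strengthening already remarked upon after Theorem \ref{costrong}.
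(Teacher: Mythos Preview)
Your proposal is correct and follows exactly the approach the paper intends: the paper simply states that this theorem (together with the preceding $\ell_q$ theorem) follows from ``a combination of Proposition \ref{discrepancycoarse}, Proposition \ref{treesubquolip}, Corollary \ref{cdiam}, and Theorem \ref{treedist}'' without spelling out the details, and your argument is precisely the natural unpacking of that combination, mirroring the proof of Theorem \ref{costrong} with Proposition \ref{discrepancycoarse} in place of Proposition \ref{discrepancy}. Your observation that the height shift $n\mapsto n+k$ is harmless because $c_{\co}(T^\omega_h)$ is bounded independently of $h$ is exactly the point, and your closing remark on the strengthening parallels the remark after Theorem \ref{costrong}.
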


\subsection{Metric characterization of asymptotic properties}
It is a celebrated result of Bourgain \cite{Bourgain1986a} that superreflexivity can be characterized in terms of the bi-Lipschitz embeddability of the complete hyperbolic binary trees. Since then other characterizations have been discovered \cite{Baudier2007}, \cite{JohnsonSchechtman2009}, \cite{Ostrovskii2014AGMS}. The asymptotic analogue of Bourgain's characterization was proved by the first author, Kalton, and Lancien \cite{BKL2010}. The definitions of the asymptotic versions of uniform convexity and uniform smoothness are briefly recalled. Let $(X,\|\cdot\|)$ be a Banach space and $t>0$. We denote by $S_X$ its unit sphere. For $x\in S_X$ and $Y$ a closed linear subspace of $X$, we define $$\overline{\rho}(t,x,Y):=\sup_{y\in S_Y}\|x+t y\|-1\ \ \ \ {\rm and}\ \ \ \ \overline{\delta}(t,x,Y):=\inf_{y\in S_Y}\|x+t y\|-1,$$ and
$$\overline{\rho}(t):=\sup_{x\in S_X}\ \inf_{{\rm dim}(X/Y)<\infty}\overline{\rho}(t,x,Y)\ \ \ \ {\rm and}\ \ \ \ \overline{\delta}(t):=\inf_{x\in S_X}\ \sup_{{\rm dim}(X/Y)<\infty}\overline{\delta}(t,x,Y).$$ The norm $\|\cdot \|$ is said to be {\it asymptotically uniformly smooth} (a.u.s. in short) if $$\lim_{t \to 0}\frac{\overline{\rho}(t)}{t}=0.$$ It is said to be {\it asymptotically uniformly convex} (a.u.c. in short) if $$\forall t>0\ \ \ \ \overline{\delta}(t)>0.$$ These moduli were introduced by Milman in \cite{Milman1971}. We recall the main result from \cite{BKL2010}.

\begin{theorem}[\cite{BKL2010}]\label{BKL} Let $X$ be a reflexive Banach space. The following assertions are equivalent:
\begin{enumerate}[(i)]
\item $X$ is a.u.s. renormable \underline{and} $X$ is a.u.c. renormable,
\item $\sup_{h\ge 1}c_X(T^\omega_h)=\infty$,
\item $c_X(T^\omega_\omega)=\infty$.
\end{enumerate}
\end{theorem}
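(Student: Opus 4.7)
The plan is to prove the cycle $(i) \Longrightarrow (ii) \Longrightarrow (iii) \Longrightarrow (i)$, using the quantitative content of Theorem \ref{treedist} for the first implication. A crucial auxiliary fact, due to Dilworth--Kutzarova--Lancien--Randrianarivony, is that for a reflexive Banach space $X$, admitting simultaneously an a.u.s.\ and an a.u.c.\ equivalent norm is equivalent to admitting a single equivalent norm with property $(\beta)$. Combined with the identity $\bigcup_{p\in(1,\infty)}\cC_{(\beta_p)} = \cC_{(\beta)}$ from \cite{DKLRpriv}, statement $(i)$ is therefore equivalent to saying that $X \in \cC_{(\beta_p)}$ for some $p \in (1,\infty)$.

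Granting this equivalence, the implication $(i) \Longrightarrow (ii)$ is a direct application of Theorem \ref{treedist}, giving in fact the sharp quantitative lower bound $c_X(T^\omega_h) \ge 2\gamma^{1/p}\log(h/2)^{1/p}$. The implication $(ii) \Longrightarrow (iii)$ is immediate: since $T^\omega_h$ is an isometric subgraph of $T^\omega_\omega$, any bi-Lipschitz embedding of $T^\omega_\omega$ with distortion $D$ would restrict to a bi-Lipschitz embedding of every $T^\omega_h$ with distortion at most $D$, contradicting $(ii)$.

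The remaining implication $(iii) \Longrightarrow (i)$ is the hard one, and I would argue it by contraposition: assuming a reflexive $X$ does not admit any $(\beta)$ renorming, construct a bi-Lipschitz embedding $f \colon T^\omega_\omega \to X$. The construction is inductive on the levels. At each vertex $\n$ already placed at $f(\n)$, the failure of asymptotic uniform smoothness (or dually of asymptotic uniform convexity) under every equivalent norm, combined with reflexivity and Szlenk-type/asymptotic-structure analysis, produces a weakly null sequence whose terms are uniformly separated from one another while being small in the appropriate asymptotic modulus. A standard weak compactness plus small perturbation argument then lets one choose simultaneously, across all vertices of a given level, the countably many successors $f(\n,1), f(\n,2),\ldots$ so as to uniformly control both the Lipschitz and the co-Lipschitz constants along the forthcoming subtree. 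The main obstacle is precisely this synchronisation step, which is the technical heart of \cite{BKL2010}. The novelty brought by the present framework, compared to \cite{BKL2010}, is the explicit quantitative distortion estimate $\Omega(\log(h)^{1/p})$ rather than just the qualitative conclusion $\sup_h c_X(T^\omega_h) = \infty$.
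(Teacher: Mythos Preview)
The paper does not give its own proof of this theorem; it is simply recalled from \cite{BKL2010} as background. What the paper \emph{does} do, in the paragraph following Theorem~\ref{equivalences}, is point out that the implication $(i)\Rightarrow(ii)$ can be re-derived from Theorem~\ref{treedist} together with Theorem~\ref{equivalences}, yielding a simpler and quantitatively sharper argument than the original one in \cite{BKL2010}. Your proof of $(i)\Rightarrow(ii)$ is exactly this route, so on that implication you and the paper agree completely. Your $(ii)\Rightarrow(iii)$ is the trivial restriction argument, which is fine.

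For $(iii)\Rightarrow(i)$, however, you have not actually given a proof. The paragraph about an inductive level-by-level construction using weakly null sequences and a ``synchronisation step'' is only a loose narrative, and you yourself defer the substance to \cite{BKL2010}. That is consistent with how the present paper treats the statement (it also just cites \cite{BKL2010}), but you should not present this as a self-contained argument: the actual construction in \cite{BKL2010} proceeds via the Szlenk index and a dichotomy on a.u.s./a.u.c.\ renormability, and your sketch does not capture the mechanism by which the failure of \emph{either} renorming hypothesis is converted into a uniformly bi-Lipschitz copy of $T^\omega_\omega$. In short: your $(i)\Rightarrow(ii)$ matches the paper's suggested alternative; your $(iii)\Rightarrow(i)$ is, like the paper's, a citation rather than a proof.
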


Theorem \ref{equivalences}, which is partially explicit in \cite{DKLR2014}, follows from the proof of Theorem 4 in \cite{Kutzarova1990}.
\begin{theorem}[\cite{DKLR2014},\cite{Kutzarova1990}]\label{equivalences} Let X be a separable Banach space. The following assertions are equivalent:
\begin{enumerate}[(i)]
\item $X$ admits an equivalent norm with property $(\beta)$,
\item $X$ admits an equivalent norm with property $(\beta_p)$ for some $p\in(1,\infty)$,
\item $X$ is reflexive, a.u.s. renormable, and a.u.c. renormable.
\end{enumerate}
\end{theorem}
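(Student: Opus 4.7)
The plan is to establish the cycle (ii) $\Rightarrow$ (i) $\Rightarrow$ (iii) $\Rightarrow$ (ii). The first implication is immediate: if $\bar\beta_X(t)\geq \gamma t^p$ on $[0,a]$, then in particular $\bar\beta_X(\varepsilon)>0$ for every $\varepsilon\in(0,a]$, which is exactly property $(\beta)$.

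For (i) $\Rightarrow$ (iii), I would first check that property $(\beta)$ implies reflexivity: if $X$ were non-reflexive, James's characterization would produce, after normalization, a unit vector $x\in B_X$ and a sequence $(y_i)\subset B_X$ with $\sep(\{y_i\})$ bounded below by some $\varepsilon>0$ and $\|(x-y_i)/2\|$ uniformly close to $1$, contradicting $\bar\beta_X(\varepsilon)>0$. I would then extract from the positivity of $\bar\beta_X$ both the nearly uniform convexity of $\|\cdot\|$ (by specializing $x=0$ in the defining inequality) and the nearly uniform smoothness of $\|\cdot\|$ (by feeding perturbations $x+\lambda y_i$ of a fixed unit vector into the inequality and extracting an appropriate subsequence). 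By the Prus--Huff equivalences in the reflexive setting, these two properties translate respectively to the existence of an equivalent a.u.c.\ norm and an equivalent a.u.s.\ norm on $X$.

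The substantive step is (iii) $\Rightarrow$ (ii), which follows from the proof of Theorem~4 in \cite{Kutzarova1990}. Given a reflexive $X$ with an equivalent a.u.c.\ norm $\|\cdot\|_c$ and an equivalent a.u.s.\ norm $\|\cdot\|_s$, the first step is to upgrade each renorming so that its modulus has power type, a standard fact in the reflexive setting; say $\bar\delta_{\|\cdot\|_c}(t)\geq c_1 t^p$ and $\bar\rho_{\|\cdot\|_s}(t)\leq c_2 t^q$ with $1<q\leq p<\infty$. I would then combine the two norms into a single norm $\tnorm{\cdot}$ by an $\ell_p$-style formula such as $\tnorm{x}^p=\|x\|_c^p+\|x\|_s^p$ (or by Kutzarova's explicit averaging) and verify property $(\beta_p)$ by direct computation: for $x\in B_{\tnorm{\cdot}}$ and an $\varepsilon$-separated sequence $(y_i)\subset B_{\tnorm{\cdot}}$, the $\|\cdot\|_s$-component forces, after extracting a subsequence, a decomposition $y_i=z+w_i$ with $(w_i)$ weakly null, onto which the $\|\cdot\|_c$-modulus of power type $p$ applies to yield $\tnorm{(x-y_{i_0})/2}\leq 1-\gamma\varepsilon^p$ for some $i_0$. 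The main obstacle is precisely this interlocking of the two moduli inside the combined norm, which is the technical heart of Kutzarova's argument and the reason the paper invokes the proof rather than the bare statement of her theorem.
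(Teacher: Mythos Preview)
The paper does not give its own proof of this theorem: it is stated as a known result, attributed to \cite{DKLR2014} and \cite{Kutzarova1990}, with the single sentence ``Theorem~\ref{equivalences}, which is partially explicit in \cite{DKLR2014}, follows from the proof of Theorem~4 in \cite{Kutzarova1990}.'' So there is nothing in the paper to compare your argument against beyond that citation.

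As for your sketch itself, the cycle (ii) $\Rightarrow$ (i) and the reflexivity and a.u.c.\ parts of (i) $\Rightarrow$ (iii) are fine, and your identification of (iii) $\Rightarrow$ (ii) as the substantive step matching Kutzarova's construction is correct. The weak point is the a.u.s.\ half of (i) $\Rightarrow$ (iii): property $(\beta)$ does not obviously yield nearly uniform smoothness by ``feeding perturbations $x+\lambda y_i$'' into the defining inequality --- the $(\beta)$ condition controls $\|x-y_{i_0}\|$ from above for \emph{some} $i_0$, which is a convexity-type estimate, not a smoothness one. The standard route is rather through duality (in the reflexive setting, $(\beta)$ on $X$ forces a suitable convexity property on $X^*$, hence a.u.s.\ renormability of $X$), or directly via the Kutzarova/DKLR machinery. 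Also, in (iii) $\Rightarrow$ (ii) the naive $\ell_p$-combination $\tnorm{x}^p=\|x\|_c^p+\|x\|_s^p$ is not what Kutzarova actually does, and making that particular formula deliver a power-type $(\beta)$-modulus is where the real work lies; you correctly flag this as ``the technical heart'' but the outline as written would not go through without substantial modification.
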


The same equivalences also hold without the separability assumption \cite{DKLRpriv}. If one uses a combination of Theorem \ref{BKL} and Theorem \ref{equivalences} to prove that, for every Banach space $Y$ admitting an equivalent norm with property ($\beta$) one has $\lim_{h\to\infty}c_Y(T_h^\omega)=\infty$, then one does not obtain an optimal estimate on the rate of growth of $(c_Y(T_h^\omega))_{h\ge 1}$. Moreover the proof in \cite{BKL2010} of the fact that $\lim_{h\to\infty}c_Y(T_h^\omega)=\infty$ for every reflexive Banach space $Y$ that is a.u.s. renormable and a.u.c. renormable is rather technical and escapes geometric intuition. The advantage of using Theorem \ref{treedist} stems for the fact that it gives a simple, geometric, and direct proof of the former fact, and it provides an optimal estimate on the rate of growth. New problems are also uncovered. Indeed, we showed that $\lim_{\ell\to\infty}c_Y(P_{\ell}^\omega)=\infty$, but the following related embedding problem is open.

\begin{problem}
If $Y$ does not admit any equivalent norm with property $(\beta)$, do we have $\sup_{\ell\ge 1}c_Y(P^\omega_\ell)<\infty$?
\end{problem}

\subsection{Finite determinacy of bi-Lipschitz embeddability problems}\label{determinacy}

Let $\lambda\in[1,\infty)$. A metric space $X$ is $\lambda$-finitely representable in another metric space $Y$ if $c_{Y}(F)\le \lambda$ for every \textit{finite} subset $F$ of $X$. $X$ is crudely finitely representable (resp. finitely representable) in $Y$ if it is $\lambda$-finitely representable in $Y$ for some $\lambda\in[1,\infty)$ (resp. for every $\lambda\in(1,\infty)$).

\medskip

Let $\cC$ be a class of metric spaces. Given a metric space $X$, we say that its bi-Lipschitz embeddability problem in the class $\cC$ is finitely determined if for \textit{every} $Y\in \cC$, $X$ admits a bi-Lipschitz embedding into $Y$ whenever $X$ is crudely finitely representable in $Y$. Ostrovskii's finite determinacy theorem \cite{Ostrovskii2012} says that for every locally finite metric space $X$, its bi-Lipschitz embeddability problem in the class of Banach spaces is finitely determined. It is folklore that the local finiteness condition in Ostrovskii's theorem cannot be removed. For instance, $\ell_2$ is finitely representable in $\ell_1$, but it is a well-known fact in nonlinear Banach space theory that $\ell_2$ does not bi-Lipschitzly embed into $\ell_1$. If one restricts ones attention to the class of graph metrics it becomes a tricky task to find examples of non-locally finite graphs whose bi-Lipschitz embeddability problem in the class of Banach spaces is \textit{not} finitely determined. Such an example can be provided appealing to Theorem \ref{treedist}. Indeed, if $Y=\xbl$, then $T_\omega^\omega$ is finitely representable in $Y$ but it does not admit any bi-Lipschitz embedding into $Y$. Therefore Ostrovskii's finite determinacy theorem does not hold even for structurally simple graphs such as (non-locally finite) trees. Ostrovskii's proof actually gives a more precise quantitative statement.

\begin{theorem}[\cite{Ostrovskii2012}] There exists $\mu\in(0,\infty)$ such that for every locally finite metric space $M$ and every Banach space $Y$ the inequality $c_Y(M)\le \mu\lambda$ holds whenever $M$ is $\lambda$-finitely representable in $Y$.
\end{theorem}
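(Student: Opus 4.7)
The plan is to follow Ostrovskii's strategy of exhausting $M$ by an increasing sequence of finite subsets, applying $\lambda$-finite representability on each piece, and then stitching the resulting embeddings together into a single bi-Lipschitz embedding of $M$ into $Y$. Since $M$ is locally finite, I would fix a basepoint $x_0 \in M$ and write $M = \bigcup_{n=1}^{\infty} M_n$ with $M_n$ finite, $M_n \subset M_{n+1}$, and $M_n \supset B_M(x_0, R_n)$ for some scales $R_n \to \infty$. By hypothesis, for each $n$ there exists $f_n \colon M_n \to Y$ with $\dist(f_n) \le \lambda$. After translating so that $f_n(x_0) = 0$ and rescaling so that $\lip(f_n^{-1}) \le 1$ and $\lip(f_n) \le \lambda$, each $f_n$ is uniformly controlled on balls around $x_0$.

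Next, I would fix a free ultrafilter $\mathcal{U}$ on $\bn$ and define $\tilde f \colon M \to Y^{\mathcal{U}}$ by $\tilde f(x) = [f_n(x)]_{\mathcal{U}}$ (setting $f_n(x) := 0$ when $x \notin M_n$). Because every $x \in M$ belongs to $M_n$ for all but finitely many $n$, this map is well defined, and for every pair $x \neq y \in M$, taking the ultrafilter limit of the inequalities $d(x,y) \le \|f_n(x)-f_n(y)\| \le \lambda d(x,y)$ shows that $\tilde f$ is a bi-Lipschitz embedding of $M$ into $Y^{\mathcal{U}}$ with distortion at most $\lambda$.

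The core of the argument is then to transfer this $Y^{\mathcal{U}}$-valued embedding into one that takes values in $Y$ itself, losing only a universal multiplicative factor. The tool is the classical fact that $Y^{\mathcal{U}}$ is finitely representable in $Y$: every finite subset of $\tilde f(M)$ can be realized inside $Y$ with distortion at most $1 + \epsilon$. Choosing a rapidly growing sequence of scales, I would approximate $\tilde f$ on $M_k$ by a map $g_k \colon M_k \to Y$ with distortion at most $\lambda(1 + 2^{-k})$ and then assemble the $g_k$'s into a single $f \colon M \to Y$ by a telescoping construction that uses only the linear and metric structure of $Y$; the geometric-series decay of the per-scale errors yields a distortion bound $\dist(f) \le \mu \lambda$ for a universal constant $\mu$, independent of $M$, $Y$ and $\lambda$.

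The main obstacle is this last patching step. Producing a single $Y$-valued embedding (as opposed to one with values in a sum of copies of $Y$ or in $Y^{\mathcal{U}}$) requires carefully organized approximations on nested finite sets, where the approximation on $M_{k+1}$ must agree, up to a well-controlled error, with the one on $M_k$ so that the global map remains bi-Lipschitz. Local finiteness of $M$ is essential here: at each scale only finitely many new points have to be accommodated, which is what enables the use of finite representability and keeps the cumulative error summable. Without local finiteness this mechanism breaks down, as the counterexamples $\co \hookrightarrow \ell_1$ and $T^\omega_\omega \hookrightarrow \xbl$ (the latter being an application of Theorem \ref{treedist}) demonstrate.
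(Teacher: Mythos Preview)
This theorem is cited from \cite{Ostrovskii2012} and is not proved in the present paper, so there is no in-paper argument to compare against. Your outline is in the right spirit, but there is a genuine gap at the decisive step.

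The ultrapower detour does not buy you anything: passing to $\tilde f\colon M\to Y^{\mathcal U}$ and then using finite representability of $Y^{\mathcal U}$ in $Y$ to produce $g_k\colon M_k\to Y$ merely recovers unrelated finite-piece embeddings of distortion $\le\lambda(1+2^{-k})$, which you already had directly from the hypothesis. The $(1+\epsilon)$-approximations furnished by finite representability of $Y^{\mathcal U}$ in $Y$ are completely independent for different $k$, so ``approximating $\tilde f$'' creates no compatibility whatsoever between $g_k$ and $g_{k+1}$; you are exactly where you started. The gluing is therefore the entire content of the theorem, and ``a telescoping construction'' with ``geometric-series decay of per-scale errors'' is not a mechanism. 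Ostrovskii's actual argument does not attempt to make successive $g_k$ agree; instead one partitions $M$ into metric annuli $\{x:2^{k}\le d(x,x_0)<2^{k+1}\}$ around the basepoint, applies a local embedding on a thickening of each annulus, and uses the \emph{linear} structure of $Y$ to interpolate explicitly between consecutive embeddings across the annular boundaries. The universal constant $\mu$ arises from this concrete interpolation, and local finiteness enters precisely to make each annulus finite so that the hypothesis applies. Your sketch names the obstacle correctly but does not supply the idea that overcomes it. (Aside: the standard non-locally-finite counterexample, and the one the paper itself uses, is $\ell_2$ into $\ell_1$, not $\co$ into $\ell_1$.)
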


The bi-Lipschitz embeddability problem for the space $T_h^\omega$ seems to be more elusive. In the next proposition it is shown that an analogue of the quantitative statement above does not hold for the sequence $(T^\omega_h)_{h\ge 1}$.

\begin{proposition}\label{uniformfindet}Let $p\in(1,2)$. There does not exist a constant $\mu\in(0,\infty)$ such that for every $h\ge 1$ the inequality $c_{\ell_p}(T^\omega_h)\le \mu\lambda$ holds whenever $T^\omega_h$ is $\lambda$-finitely representable in $\ell_p$.
\end{proposition}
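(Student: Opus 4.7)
The plan is to exhibit a gap between the $\ell_p$-distortion of $T^\omega_h$ itself and the distortion with which finite subsets of $T^\omega_h$ can be embedded into $\ell_p$. Since $\ell_p$ (with its usual norm) satisfies property $(\beta_p)$ with some constant $\gamma>0$, Theorem \ref{treedist} already gives
$$c_{\ell_p}(T^\omega_h)\ge 2\gamma^{1/p}\log(h/2)^{1/p}.$$
The work therefore lies in producing, for every $h$, a bound $\lambda_h$ such that $T^\omega_h$ is $\lambda_h$-finitely representable in $\ell_p$ and such that $\lambda_h$ grows strictly slower than $\log(h)^{1/p}$.

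The idea is to route the embedding of the finite subsets through $\ell_2$. First I would apply Corollary \ref{cdiam} with $p=2$ to the unweighted tree $T^\omega_h$, whose combinatorial diameter is $2h$. This yields an absolute constant $K$, a set $I$, and an embedding $\varphi\colon T^\omega_h\hookrightarrow\ell_2(I)$ with $\dist(\varphi)\le K\log(2h)^{1/2}$. The key point is that any \emph{finite} subset $F\subset T^\omega_h$ has image $\varphi(F)$ contained in a finite-dimensional subspace of $\ell_2(I)$, hence in a copy of $\ell_2^n$ for some $n\in\bn$. Since $p\in(1,2)$, the classical Gaussian embedding $e_k\mapsto g_k/\|g_k\|_p$ realizes $\ell_2$ isometrically inside $L_p([0,1])$, and any finite-dimensional subspace of $L_p$ is $(1+\varepsilon)$-finitely representable in $\ell_p$ for every $\varepsilon>0$ by standard simple-function approximation. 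Composing with $\varphi_{|F}$ and taking $\varepsilon=1$ gives an embedding of $F$ into $\ell_p$ of distortion at most $2K\log(2h)^{1/2}$. Therefore $T^\omega_h$ is $\lambda_h$-finitely representable in $\ell_p$ with $\lambda_h:=2K\log(2h)^{1/2}$.

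The conclusion is then immediate by comparison: if a universal $\mu\in(0,\infty)$ satisfying $c_{\ell_p}(T^\omega_h)\le\mu\lambda_h$ existed for every $h$, the two estimates combined would force
$$2\gamma^{1/p}\log(h/2)^{1/p}\le 2\mu K\log(2h)^{1/2}$$
for all $h\ge 2$, which is impossible as $h\to\infty$ because $1/p>1/2$. There is no serious obstacle in this plan; the argument is essentially a comparison between the Bourgain-type embedding of the tree into a Hilbert target (exploited via Corollary \ref{cdiam}) and the $(\beta_p)$-enforced lower bound for embeddings into $\ell_p$ itself. The only point to take some care with is the almost-isometric step from finite-dimensional Hilbert spaces into $\ell_p$, which is classical.
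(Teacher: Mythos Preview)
Your proposal is correct and follows essentially the same route as the paper: bound finite subsets of $T^\omega_h$ in $\ell_p$ via the $O(\sqrt{\log h})$ embedding into Hilbert space from Corollary~\ref{cdiam}, then contrast with the $\Omega(\log(h)^{1/p})$ lower bound from Theorem~\ref{treedist}. The only cosmetic difference is that the paper invokes Dvoretzky's theorem to pass from $\ell_2^n$ to $\ell_p$, whereas you use the Gaussian isometry $\ell_2\hookrightarrow L_p$ together with finite representability of $L_p$ in $\ell_p$; both are standard and yield the same conclusion.
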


\begin{proof} Assume that there exists a finite constant $\mu>0$ such that for every $h\in\bn$, one has $c_{\ell_p}(T^\omega_h)\le \mu\lambda$ whenever $T_h^\omega$ is $\lambda$-finitely representable in $\ell_p$. Let $F$ be a finite subset of $T^\omega_h$. It follows from Corollary \ref{cdiam} that $c_{\ell_2}(F)=O(\sqrt{\log(h)})$ and hence $c_{\ell_p}(F)=O(\sqrt{\log(h)})$ by Dvoretzky's theorem. But Theorem \ref{treedist} says that $c_{\ell_p}(T^\omega_h)=\Omega( \log(h)^{\frac{1}{p}})$. Therefore $\mu\gtrsim\log(h)^{\frac{1}{p}-\frac{1}{2}}$ which is a contradiction when $h$ is large enough.
\end{proof}

\bigskip

\textbf{Acknowledgments:} We wish to thank G. Godefroy, W. B. Johnson, and G. Lancien for many inspirational discussions at various stages of the development of this article. We would like also to extend our gratitude and appreciation to F. Lancien, G. Lancien, and A. Proch\'azka for the flawless organization of the Autumn School on Nonlinear Geometry of Banach Spaces and Applications in M\'etabief, and of the Conference on Geometric Functional Analysis and its Applications in Besan\c con. The scientific activity and atmosphere was incredibly enlightening. The work presented here found its inspiration, and was initiated while participating at these events.

\begin{bibsection}
\begin{biblist}

\bib{AyerbeDominguezCutillas1994}{article}{
    AUTHOR = {Ayerbe, J. M.},
    AUTHOR = {Dom{\'{\i}}nguez Benavides, T.},
    AUTHOR = {Cutillas, S. F.},
     TITLE = {Some noncompact convexity moduli for the property {$(\beta)$}
              of {R}olewicz},
   JOURNAL = {Comm. Appl. Nonlinear Anal.},
  FJOURNAL = {Communications on Applied Nonlinear Analysis},
    VOLUME = {1},
      YEAR = {1994},
    NUMBER = {1},
     PAGES = {87--98},
      ISSN = {1074-133X},
   MRCLASS = {46B20 (47H09)},
  MRNUMBER = {1268081 (95a:46021)},
MRREVIEWER = {Elisabetta Maluta},
}

\bib{BJLPS1999}{article}{
  author={Bates, S.},
  author={Johnson, W. B.},
  author={Lindenstrauss, L.},
  author={Preiss, D.},
  author={Schechtman, G.},
  title={Affine approximation of Lipschitz functions and nonlinear quotients},
  journal={Geom. Funct. Anal.},
  volume={9},
  date={1999},
  pages={1092--1127},
}

\bib{Baudier2007}{article}{
  author={Baudier, F.},
  title={Metrical characterization of super-reflexivity and linear type of Banach spaces},
  journal={Archiv Math.},
  volume={89},
  date={2007},
  pages={419--429},
}

\bib{BKL2010}{article}{
  author={Baudier, F.},
  author={Kalton, N. J.},
  author={Lancien, G.},
  title={A new metric invariant for Banach spaces},
  journal={Studia Math.},
  volume={199},
  date={2010},
  pages={73--94},
}

\bib{BaudierLancien2008}{article}{
  author={Baudier, F.},
  author={Lancien, G.},
  title={Embeddings of locally finite metric spaces into Banach spaces},
  journal={Proc. Amer. Math. Soc.},
  volume={136},
  date={2008},
  pages={1029--1033},
}

\bib{BLbook}{book}{
  title={Geometric Nonlinear Functional Analysis. Vol. 1},
  author={Benyamini, Y.},
  author={Lindenstrauss, J.},
  series={American Mathematical Society Colloquium Publications},
  volume={48},
  place={Providence, RI},
  year={2000},
  publisher={American Mathematical Society}
}

\bib{Bourgain1986a}{article}{
  author={Bourgain, J.},
  title={The metrical interpretation of superreflexivity in Banach spaces},
  journal={Israel J. Math.},
  volume={56},
  date={1986},
  pages={222--230},
}

\bib{DKLRpriv}{article}{
  author={Dilworth, S. J.},
  author={Kutzarova, D.},
  author={Lancien, G.},
  author={Randrianarivony, N. L.},
  title={private communication},
}

\bib{DKLR2014}{article}{
  author={Dilworth, S. J.},
  author={Kutzarova, D.},
  author={Lancien, G.},
  author={Randrianarivony, N. L.},
  title={Asymptotic geometry of Banach spaces and uniform quotient maps},
  journal={Proc. Amer. Math. Soc.},
  fjournal={Proceedings of the American Mathematical Society},
  volume={142},
  year={2014},
  number={8},
  pages={2747--2762},
  issn={0002-9939},
  mrclass={46B80},
  mrnumber={3209329},
  url={http://dx.doi.org/10.1090/S0002-9939-2014-12001-6},
}

\bib{DKR2014}{article}{
  author={Dilworth, S. J.},
  author={Kutzarova, D.},
  author={Randrianarivony, N. L.},
  title={The transfer of property ($\beta $) of {R}olewicz by a uniform quotient map},
  journal={Trans. Amer. Math. Soc.},
  volume={},
  date={},
  pages={to appear, arXiv:1408.6424 (2014), 18 pages},
}

\bib{Gromov1998}{book}{
  title={Metric Structures for Riemannian and Non-Riemannian Spaces},
  series={Progress in Math.},
  volume={152},
  author={Gromov, M.},
  publisher={Birkh\"{a}user},
  place={Boston},
  date={1998},
}

\bib{Gupta2000}{article}{
  author={Gupta, A.},
  title={Embedding tree metrics into low-dimensional {E}uclidean spaces},
  journal={Discrete Comput. Geom.},
  fjournal={Discrete \& Computational Geometry. An International Journal of Mathematics and Computer Science},
  volume={24},
  year={2000},
  number={1},
  pages={105--116},
  issn={0179-5376},
  coden={DCGEER},
  mrclass={68U05 (54C25 54E35)},
  mrnumber={1765236 (2001b:68144)},
  url={http://dx.doi.org/10.1145/301250.301434},
}

\bib{GuKL2003}{article}{
 AUTHOR = {Gupta, A.},
 AUTHOR = {Krauthgamer, R.},
 AUTHOR = {Lee, J. R.},
     TITLE = {Bounded geometries, fractals, and low-distortion embeddings},
   JOURNAL = {in ``44th Symposium on Foundations of Computer Science''},
  FJOURNAL = {Discrete \& Computational Geometry. An International Journal
              of Mathematics and Computer Science},
      YEAR = {2003},
     PAGES = {534--543},

}

\bib{Handbook}{collection}{
  title={Handbook of the Geometry of Banach Spaces. Vol. I},
  editor={Johnson, W. B.},
  editor={Lindenstrauss, J.},
  publisher={North-Holland Publishing Co.},
  place={Amsterdam},
  date={2001},
}

\bib{JohnsonSchechtman2009}{article}{
  author={Johnson, W. B.},
  author={Schechtman, G.},
  title={Diamond graphs and super-reflexivity},
  journal={J. Topol. Anal.},
  fjournal={Journal of Topology and Analysis},
  volume={1},
  year={2009},
  number={2},
  pages={177--189},
  issn={1793-5253},
  mrclass={52C99 (46B10)},
  mrnumber={2541760 (2010k:52031)},
  url={http://dx.doi.org/10.1142/S1793525309000114},
}

\bib{KaltonLancien2008}{article}{
  author={Kalton, N. J.},
  author={Lancien, G.},
  title={Best constants for Lipschitz embeddings of metric spaces into $\co$},
  journal={Fund. Math.},
  volume={199},
  year={2008},
  number={3}
  pages={249--272},
}

\bib{Kloeckner2014}{article}{
  author={Kloeckner, B. R.},
  title={Yet another short proof of {B}ourgain's distortion estimate for embedding of trees into uniformly convex {B}anach spaces},
  journal={Israel J. Math.},
  fjournal={Israel Journal of Mathematics},
  volume={200},
  year={2014},
  number={1},
  pages={419--422},
  issn={0021-2172},
  mrclass={46B25},
  mrnumber={3219585},
  url={http://dx.doi.org/10.1007/s11856-014-0024-4},
}

\bib{Kutzarova1991}{article}{
  author={Kutzarova, D.},
  title={{$k$}-{$\beta $} and {$k$}-nearly uniformly convex {B}anach spaces},
  journal={J. Math. Anal. Appl.},
  fjournal={Journal of Mathematical Analysis and Applications},
  volume={162},
  year={1991},
  number={2},
  pages={322--338},
  issn={0022-247X},
  coden={JMANAK},
  mrclass={46B04 (46B03 46B20)},
  mrnumber={1137623 (93b:46018)},
  mrreviewer={Yves Raynaud},
  url={http://dx.doi.org/10.1016/0022-247X(91)90153-Q},
}

\bib{Kutzarova1990}{article}{
  author={Kutzarova, D.},
  title={An isomorphic characterization of property {$(\beta )$} of {R}olewicz},
  journal={Note Mat.},
  fjournal={Note di Matematica},
  volume={10},
  year={1990},
  number={2},
  pages={347--354},
  issn={1123-2536},
  mrclass={46B20},
  mrnumber={1204212 (94a:46020)},
  mrreviewer={S. Rolewicz},
}

\bib{LeeNaorPeres2009}{article}{
   author={Lee, J. R.},
   author={Naor, A.},
    AUTHOR = {Peres, Y.},
     TITLE = {Trees and {M}arkov convexity},
   JOURNAL = {Geom. Funct. Anal.},
  FJOURNAL = {Geometric and Functional Analysis},
    VOLUME = {18},
      YEAR = {2009},
    NUMBER = {5},
     PAGES = {1609--1659},
      ISSN = {1016-443X},
     CODEN = {GFANFB},
   MRCLASS = {05C05 (05C12 51F99 60B99)},
  MRNUMBER = {2481738 (2010e:05065)},
       URL = {http://dx.doi.org/10.1007/s00039-008-0689-0},
}

\bib{LimaR2012}{article}{
  author={Lima, V.},
  author={Randrianarivony, N. L.},
  title={Property $(\beta)$ and uniform quotient maps},
  journal={Israel J. Math.},
  volume={192},
  year={2012},
  pages={311--323},
}

\bib{LinialMagenSaks1998}{article}{
  author={Linial, N.},
  author={Magen, A.},
  author={Saks, M.},
  title={Low distortion {E}uclidean embeddings of trees},
  journal={Israel J. Math.},
  fjournal={Israel Journal of Mathematics},
  volume={106},
  year={1998},
  pages={339--348},
  issn={0021-2172},
  url={http://dx.doi.org/10.1007/BF02773475},
}

\bib{Matousek1999}{article}{
  author={Matou{\v {s}}ek, J.},
  title={On embedding trees into uniformly convex Banach spaces},
  journal={Israel J. Math.},
  volume={114},
  year={1999},
  pages={221--237},
}

\bib{MendelNaor2004}{article}{
  author={Mendel, M.},
  author={Naor, A.},
  title={Euclidean quotients of finite metric spaces},
  journal={Adv. Math.},
  volume={189},
  year={2004},
  pages={451--494},
}

\bib{MendelNaor2013}{article}{
  author={Mendel, M.},
  author={Naor, A.},
     TITLE = {Markov convexity and local rigidity of distorted metrics},
   JOURNAL = {J. Eur. Math. Soc. (JEMS)},
    VOLUME = {15},
      YEAR = {2013},
    NUMBER = {1},
     PAGES = {287--337},
     }

\bib{Milman1971}{article}{
  author={Milman, V. D.},
  title={Geometric theory of Banach spaces. II. Geometry of the unit ball},
  language={Russian},
  journal={Uspehi Mat. Nauk},
  volume={26},
  date={1971},
  pages={73--149},
  note={English translation: Russian Math. Surveys {\bf 26} (1971), 79--163},
}

\bib{Ostrovskii2012}{article}{
  author={Ostrovskii, M. I.},
  title={Embeddability of locally finite metric spaces into {B}anach spaces is finitely determined},
  journal={Proc. Amer. Math. Soc.},
  volume={140},
  year={2012},
  number={8},
  pages={2721--2730},
}

\bib{Ostrovskii2014AGMS}{article}{
    AUTHOR = {Ostrovskii, M. I.},
     TITLE = {Metric characterizations of superreflexivity in terms of word
              hyperbolic groups and finite graphs},
   JOURNAL = {Anal. Geom. Metr. Spaces},
  FJOURNAL = {Analysis and Geometry in Metric Spaces},
    VOLUME = {2},
      YEAR = {2014},
     PAGES = {154--168},
      ISSN = {2299-3274},
   MRCLASS = {46B85 (05C12 20F67 46B07)},
  MRNUMBER = {3210894},
MRREVIEWER = {Leonid V. Kovalev},
       URL = {http://dx.doi.org/10.2478/agms-2014-0005},
}

\bib{Pisier1975}{article}{
  author={Pisier, G.},
  title={Martingales with values in uniformly convex spaces},
  journal={Israel J. Math.},
  volume={20},
  date={1975},
  pages={326--350},
}

\bib{Rolewicz1987}{article}{
  author={Rolewicz, S.},
  title={On $\Delta$ uniform convexity and drop property},
  journal={Studia Math.},
  volume={87},
  year={1987},
  pages={181--191},
}

\bib{Zhang2015}{article}{
  author={Zhang, S.},
  title={Coarse quotient mappings between metric spaces},
  journal={Israel J. Math.},
  volume={},
  date={},
  pages={to appear, arXiv:1403.1934 (2014), 14 pages},
}

\end{biblist}
\end{bibsection}
\end{document}